        \title{An additivity theorem for cobordism categories}
       \author{Wolfgang Steimle}
\address{Institut f\"ur Mathematik\\
	Universit\"at Augsburg\\
        D-86135 Augsburg, Germany}
\email{wolfgang.steimle@math.uni-augsburg.de}
\date{\today}
\DeclareMathAlphabet{\matheurm}{U}{eur}{m}{n}
\newcommand{\Top}{\matheurm{Top}}
\newcommand{\Cat}{\matheurm{Cat}}
\newcommand{\arincl}{\ar@{^{(}->}}
\newcommand{\arinclinv}{\ar@{_{(}->}}
\newcommand{\eps}{\varepsilon}
\newcommand{\op}{^{op}}
\newcommand{\power}[1]{\mathcal{P}(#1)}
\newcommand{\proj}{\operatorname{proj}}
\newcommand{\inv}{^{-1}}
\newcommand{\dell}{\partial}
\newcommand{\cob}{\mathrm{cob}}
\newcommand{\cart}[1]{{#1}\mathrm{-cart}}
\newcommand{\colbun}[1]{\cor{#1}\text{-}\matheurm{Bun}^\mathrm{col}}
\DeclareMathOperator{\Aut}{Aut}
\DeclareMathOperator*{\colim}{colim}
\DeclareMathOperator{\End}{End}
\DeclareMathOperator{\id}{id}
\DeclareMathOperator{\map}{map}
\DeclareMathOperator{\mor}{mor}
\DeclareMathOperator{\ob}{ob}
  \newcommand{\IR}{\mathbb{R}}
  \newcommand{\calc}{\mathcal{C}}
  \newcommand{\cald}{\mathcal{D}}
  \newcommand{\bfB}{{\mathbf B}}
  \newcommand{\bfR}{{\mathbf R}}
  \newcommand{\bfS}{{\mathbf S}}
\newcommand{\cor}[1]{\langle #1 \rangle}
\newcommand{\Emb}{\operatorname{Emb}}
\newcommand{\Diff}{\operatorname{Diff}}
\newcommand{\rc}[2]{\IR^{#1}_{\cor{#2}}}
\newcounter{commentcounter}
\theoremstyle{plain}
\newtheorem{theorem}{Theorem}[section]
\newtheorem{lemma}[theorem]{Lemma}
\newtheorem{corollary}[theorem]{Corollary}
\theoremstyle{definition}
\newtheorem{definition}[theorem]{Definition}
\theoremstyle{remark}
\newtheorem*{remark*}{Remark}
\newtheorem*{remarks*}{Remarks}
\newtheorem*{examples*}{Examples}
\newtheorem{example}[theorem]{Example}
\newtheorem{remark}[theorem]{Remark}
\let\c@equation=\c@theorem\makeatother
\begin{document}

\begin{abstract}
Using methods inspired from algebraic $K$-theory, we give a new proof of the Genauer fibration sequence, relating the cobordism categories of closed manifolds with cobordism categories of manifolds with  boundaries, and of the B\"okstedt-Madsen delooping of the cobordism category. Unlike the existing proofs, this approach generalizes to other cobordism-like categories of interest. Indeed we argue that the Genauer fibration sequence is an analogue, in the setting of cobordism categories, of Waldhausen's Additivity theorem in algebraic $K$-theory.
\end{abstract}

\maketitle

\section{Introduction}

There are various reasons to investigate cobordism-like categories of mathematical objects which cannot be decomposed using transversality. The following examples are under current investigation:
\begin{enumerate}
 \item The $h$-cobordism category \cite{RS_Hcob},
 \item Cobordism categories of manifolds equipped with a metric of positive scalar curvature \cite{ERW_psc},
 \item The cobordism category associated to a Waldhausen category \cite{RS_wald}, whose homotopy type has been shown to agree, up to degree shift, with the algebraic $K$-theory space from \cite{Waldhausen(1985)}.
 \item Cobordism categories of Poincar\'e chain complexes in the sense of Ranicki, and more generally of stable $\infty$-categories equipped with a non-degenerate quadratic functor \cite{CDHHLMNNS}, which will be shown to give rise to real algebraic $K$-theory. 
\end{enumerate}

The homotopy types of these cobordism categories are not known to be described by Thom spectra, and can hardly, if at all, be computed. For this reason, it is important to have theorems available that at least \emph{compare} the homotopy types for various inputs. This program has been successfully implemented in algebraic $K$-theory, where Waldhausen's additivity theorem \cite{Waldhausen(1985)} is probably the most fundamental theorem of this kind.

In this paper we formulate and prove a ``local-to-global principle'' designed to produce homotopy fiber sequences of cobordism-like categories, and hence long exact sequences on their homotopy groups. Roughly, it states that under certain assumptions, pull-back squares
\[\xymatrix{
 \cald'\times_\cald \calc \ar[rr] \ar[d] && \calc\ar[d]^P\\
 \cald' \ar[rr]^F && \cald
}\]
of (not necessarily unital, but  ``weakly unital'') categories induce, by geometric realization, homotopy pull-back squares of spaces; the key assumption here is that morphisms in the  target of $P$ can be universally lifted through $P$  in a specific sense, formalized by the  notion of cartesian and cocartesian fibration. 

Then we apply this local-to-global principle in the setting  of classical cobordism categories $\cob_d$ of \cite{GMTW} and its version with boundaries $\cob_{d,\dell}$ of \cite{Genauer} (actually, possibly with higher corners and tangential structures, see section \ref{sec:genauer}). It will turn out that in this case we recover the following theorem, due to Genauer \cite{Genauer}:

\begin{theorem}\label{thm:main_genauer}
 There is a homotopy fibration sequence
\begin{equation}\label{eq:genauer_sequence}
B\cob_{d}\to B\cob_{d,\dell} \xrightarrow{B\dell} B\cob_{d-1},
\end{equation}
where the functor $\dell$ is given by taking boundaries.
\end{theorem}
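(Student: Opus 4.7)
The plan is to deduce \eqref{eq:genauer_sequence} from the local-to-global principle advertised in the introduction, applied to the strict pullback square of weakly unital categories
\[\xymatrix{
 \cob_{d} \ar[rr] \ar[d] && \cob_{d,\dell} \ar[d]^-{\dell} \\
 \ast \ar[rr]^-{\emptyset} && \cob_{d-1}
}\]
in which the lower horizontal arrow picks out the empty $(d-1)$-manifold. That this is a strict pullback of categories is immediate from the definitions, since an object (resp.\ morphism) of $\cob_{d,\dell}$ has empty $\dell$ iff it is an object (resp.\ morphism) of $\cob_{d}$. Once the hypotheses of the local-to-global principle are verified, the image of this square under $B$ is a homotopy pullback, and since its lower-left corner is a point, this is exactly the fibration sequence \eqref{eq:genauer_sequence}.

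Hence the real task is to verify that $\dell$ is a cocartesian fibration in the required weakly-unital sense. I would construct the cocartesian lifts explicitly: given $\Sigma \in \cob_{d,\dell}$ with $\dell\Sigma = M$ and a cobordism $W\colon M \rightsquigarrow M'$ in $\cob_{d-1}$, the lift $\tilde W\colon \Sigma \rightsquigarrow \Sigma'$ should have target $\Sigma' := \Sigma \cup_M W$ and underlying $(d+1)$-manifold with corners an ``$L$-shape'' built from $\Sigma \times [0,1]$ and a collar of $W$, with $\Sigma$ as incoming vertical face, $\Sigma'$ as outgoing vertical face, and $W$ as horizontal side. Universality then comes from the observation that any morphism in $\cob_{d,\dell}$ restricting on the side to a composite $W'' = V \circ W$ admits an essentially unique decomposition into $\tilde W$ followed by a lift of $V$; geometrically, this is a corner-smoothing/collar argument familiar from the Genauer-style analysis of cobordism categories with boundary.

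The main obstacle I anticipate is the bookkeeping needed to fit these geometric constructions into the precise axiomatic notion of cocartesian fibration used by the local-to-global principle, and to carry it out uniformly enough to cover the tangential-structure and higher-corner variants referred to in section~\ref{sec:genauer}: one has to arrange the collars, gluings and corner-smoothings so that they assemble into coherent functorial data, not merely into a well-defined lift at each individual pair $(\Sigma, W)$. Once the cocartesian property of $\dell$ is secured, the local-to-global principle applies and yields the desired fibration sequence \eqref{eq:genauer_sequence}, proving Theorem~\ref{thm:main_genauer}.
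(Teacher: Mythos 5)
There is a genuine gap, and it is precisely the one the paper flags in the Remark following Theorem \ref{thm:main_categorical}: taking $F$ to be the inclusion of the subcategory consisting of only the object $\emptyset$ and no morphisms is the stated counterexample showing that the weak-unitality hypothesis of the local-to-global principle cannot be dropped. Your lower-left corner $\ast$ is not weakly unital in the non-unital setting (no object is the target of an equivalence if there are no morphisms at all, and there is no identity morphism of $\emptyset$ in the non-unital category $\cob_{d-1}$ to which an identity of $\ast$ could be sent). Worse, your square is not even a strict pullback in the way you claim: a morphism $(W,a)$ of $\cob_{d,\dell}$ with $\dell W=\emptyset$ is sent by $\dell$ to the empty cobordism of \emph{positive length} $a$, which is a genuine (non-identity) endomorphism of $\emptyset$ in $\cob_{d-1}$ and does not lie in $\ast$. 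So the strict pullback over $\ast$ has no morphisms at all, and its classifying space is the discrete space of objects, not $B\cob_d$; the proposed square cannot yield the fibration sequence.

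The repair, which is what the paper does, is to replace $\ast$ by the subcategory $\cob^\emptyset\subset\cob_{d-1}$ on the object $\emptyset$ and all empty cobordisms of positive length. This category is weakly unital (every morphism is a weak unit), its classifying space is contractible (it is $B$ of the topological semigroup $\vert S_\bullet(0,\infty)\vert$), and the strict pullback $\cob^\emptyset\times_{\cob_{d-1}}\cob_{d,\dell}$ is equivalent to $\cob_d$ (keeping track of the length datum). With this corrected square, your outline of the cocartesian lift is essentially the paper's: the lift of $W\colon M\to M'$ at $\Sigma$ with $\dell\Sigma=M$ is obtained by bending $[0,a]\times(\Sigma\cup_M W)$ into an $L$-shape so that $\Sigma$ and $\Sigma\cup_M W$ become the vertical faces and $W$ the horizontal one; the paper implements this via an explicit bending homeomorphism $\bfB$ and verifies universality by showing that composition with the bent cylinder agrees, up to the homeomorphism $\bfB$, with composition with a weak unit. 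Do note that besides cocartesianness you must also verify that $\dell$ is a \emph{level} fibration and that the categories are locally fibrant; in the paper this rests on the parametrized isotopy extension theorem for neat submanifolds (Theorem \ref{thm:isotopy_extension}), which your proposal does not address.
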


In his proof of this result, Genauer first determined the homotopy type of the middle term of \eqref{eq:genauer_sequence} in analogy to the computation for the left and right hand term of \cite{GMTW}, and Theorem \ref{thm:main_genauer} follows from the fibration sequence
\[
 MT(d) \to \Sigma^\infty_+(BO(d)) \to MT(d-1)
\]
of Thom spectra from \cite[(3.3)]{GMTW}. In contrast, our method does not yield, nor require, knowledge of the homotopy types of any of the terms in the fibration sequence. 

In Waldhausen's approach  to algebraic $K$-theory, the Additivity  theorem implies that the $K$-theory space can be delooped by iterating the $S_\bullet$-construction. Similarly, we formally deduce from the local-to-global principle  the following result, which is due to B\"okstedt-Madsen \cite{BM} (see section \ref{sec:delooping} for more details):

\begin{theorem}\label{thm:main_BM}
$B\cob_d$ has a (usually non-connective) delooping whose $j$-th term is the classifying space of the $d$-dimensional $(j+1)$-tuple cobordism category. 
\end{theorem}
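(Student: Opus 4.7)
The proof will follow the same general pattern as Waldhausen's proof that iterated $S_\bullet$-constructions give a delooping of algebraic $K$-theory, with the role of additivity played by the local-to-global principle alluded to in the introduction.

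Fix $d$ and $j \geq 0$ and, writing $\cob_d^{(k)}$ for the $k$-tuple cobordism category, focus on establishing an equivalence $B\cob_d^{(j+1)} \simeq \Omega B\cob_d^{(j+2)}$. By construction, the $(j+2)$-tuple version carries an additional ``cobordism direction'' compared to the $(j+1)$-tuple one, and I would use this extra direction to define an auxiliary ``path category'' $P^{(j+2)}$ together with a diagram
\[
\cob_d^{(j+1)} \hookrightarrow P^{(j+2)} \xrightarrow{\pi} \cob_d^{(j+2)},
\]
where $P^{(j+2)}$ consists of $(j+2)$-tuple cobordism data equipped with a choice of ``starting point'' in the new direction, the left map picks out the trivial such data (making the new boundary empty), and $\pi$ forgets the starting point.

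Next, two properties must be verified: (a) $BP^{(j+2)}$ is contractible, by an extra-degeneracy argument, since the chosen starting point in the extra direction supplies a canonical homotopy to a basepoint via the identity cobordisms provided by collar data; (b) the functor $\pi$ is both a cartesian and a cocartesian fibration in the sense required by the local-to-global principle, because one can freely extend or restrict a $(j+2)$-tuple cobordism in the new direction. The fiber of $\pi$ over a chosen basepoint is moreover identified with $\cob_d^{(j+1)}$, and the left map factors through this fiber. With these properties in hand, one applies the local-to-global principle to the pullback
\[\xymatrix{
\cob_d^{(j+1)} \ar[rr] \ar[d] && P^{(j+2)} \ar[d]^\pi \\
* \ar[rr] && \cob_d^{(j+2)}
}\]
to obtain a homotopy pullback after geometric realization. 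Combined with (a), this produces the equivalence $B\cob_d^{(j+1)}\simeq \Omega B\cob_d^{(j+2)}$ and hence the delooping statement, iteratively in $j$.

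The main obstacle is the construction in the previous paragraph: the category $P^{(j+2)}$ must be carefully set up within the weakly-unital framework so that it is simultaneously contractible and a (co)cartesian fibration over $\cob_d^{(j+2)}$, which requires a careful bookkeeping of collar data, identity cobordisms, and tangential structures. Once this is done, however, the delooping statement follows formally from the local-to-global principle, in complete analogy with how the delooping of Waldhausen $K$-theory follows formally from additivity.
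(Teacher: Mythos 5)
Your outline has the right global shape---a path-object fibration sequence analyzed with the local-to-global principle, in analogy with Waldhausen's delooping---and this is indeed the spirit of the paper's argument. But there is a genuine gap in where you apply the principle. The local-to-global principle (Theorems \ref{thm:main_categorical}, \ref{thm:main_Segal}) governs \emph{one} (semi-)simplicial direction at a time, whereas $\cob_d^{(j+2)}$ is a $(j+2)$-fold semi-simplicial object and ``geometric realization'' means realizing all directions. You cannot feed the pullback square over $\cob_d^{(j+2)}$ into the principle in one shot. One must realize direction by direction, and at each stage re-verify that the partially realized objects are still weakly unital semi-Segal spaces, that the relevant face maps are still level, cartesian and cocartesian fibrations, and---crucially---that after the first realization every $1$-simplex has become an equivalence (the groupoid property), which is what makes the lifting conditions persist under further realizations and what prevents the loop space from being only a group completion. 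This persistence is the technical heart of the paper (Lemma \ref{lem:delooping}, whose proof occupies steps (0)--(4), iterated $j-1$ times); your proposal locates the difficulty entirely in the construction of $P^{(j+2)}$ and is silent on this iteration, which is where the argument would actually break down if done naively.

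Two further points. First, your contractibility argument for $BP^{(j+2)}$ via ``extra degeneracies supplied by identity cobordisms'' does not go through as stated: the framework is non-unital, there are no identity cobordisms, and a weak unit does not furnish a simplicial degeneracy. The paper's substitute (Lemma \ref{lem:groupoid_group_completion}) combines a semi-simplicial nullhomotopy on the slice object $c\wr\calc$ with a cocartesian edge out of $c$ (which exists by weak unitality) and the Segal condition, and this is exactly where the groupoid property is consumed; the paper notes the statement is false without weak unitality. Second, the identification of the fiber of your $\pi$ with $\cob_d^{(j+1)}$ is itself not formal: in the paper it passes through the cobordism category of $\cor{k+1}$-manifolds (nullbordisms viewed as manifolds with an extra corner) and two Genauer-type fibration sequences (Lemma \ref{lem:endomorphism_of_B_i-1_cob}), each of which again requires the iterated application of the local-to-global principle. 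So while your strategy is the correct one in outline, the three lemmas that constitute the actual proof are precisely the points your proposal elides.
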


A virtue of the local-to-global principle is that it has interesting consequences in various situations. As will be shown in \cite{CDHHLMNNS}, it leads to an additivity theorem for real $K$-theory in the setting of stable $\infty$-categories; it will also be shown in that paper that we recover, from the local-to-global principle, Waldhausen's Additivity theorem for the algebraic $K$-theory of stable infinity categories. 

In joint work with George Raptis we will use the local-to-global principle to continue our study of the homotopy type of the $h$-cobordism category and its relation to algebraic $K$-theory, initiated in \cite{RS_Hcob}. It also applies in the context of (ii) in which case it compares the cobordism category of p.s.c.\ metrics with the usual one, called ``Fibre theorem'' in \cite{ERW_psc}, see Remark 4.1.6. in \emph{loc.\ cit.}. It also leads to a proof of a version of Genauer's theorem in the context of topological cobordism categories, as studied by Gomez-Lopez--Kupers (see \cite{GLK}) and gives a very short route to the homotopy type of the cobordism category with Baas-Sullivan singularities, discussed by Perlmutter \cite[Theorem 1.1]{Perlmutter}. 

I am grateful to Thomas Nikolaus for pointing out to me a generalization of the local-to-global principle, in the setting of simplicial sets, that  implies Quillen's Theorem B (see Theorem \ref{thm:cocartesian_Quillen_B}). Recently, Steinebrunner \cite{Steinebrunner} has proven a generalization of the local-to-global principle and used it to discuss the homotopy type of the cospan category of finite sets.

\section{The local-to-global principle}\label{sec:local_to_global}

Let us now formulate the local-to-global principle, first in the setting of topological categories. We emphasize that the categories under consideration are not assumed to be unital. We will deduce this local-to-global principle from a corresponding statement for unital categories (actually, simplicial sets, see Theorem \ref{thm:main_quasicat}) through a process of finding degeneracies. We view this generalization to the non-unital setting as the main contribution of this part of the paper. 

Let us recall that a (small, non-unital) topological category $\calc$ consists of the datum of two topological spaces $\ob(\calc)$ and $\mor(\calc)$, two  continuous maps $s,t\colon \mor(\calc)\to \ob(\calc)$
(called \emph{source map} and \emph{target map}), and a continuous map $\circ\colon \mor(\calc)\times_{\ob(\calc)} \mor(\calc)\to \mor(\calc)$ over $\ob(\calc)\times \ob(\calc)$ (called \emph{composition map}) which satisfies the usual associativity law. A \emph{continuous functor} $\calc\to \cald$ between topological categories consists of two continuous maps $\ob(\calc)\to \ob(\cald)$ and $\mor(\calc)\to \mor(\cald)$, compatible with source, target, and composition.

For a topological category $\calc$ and two objects $c,d\in \ob(\calc)$, we write, as usual, $f\colon c\to d$ for a morphism $f\in \mor(\calc)$ such that $s(f)=c$ and $t(f)=d$; and we denote by $\calc(c,d)\subset\mor(\calc)$ the subspace of morphisms with source $c$ and target $d$.

A morphism $f\colon c\to d$ is called an \emph{equivalence} if for any $t\in \ob\calc$, the maps
\[f\circ - \colon \calc(t,c) \to \calc(t,d) \quad \mathrm{and} \quad -\circ f\colon \calc(d,t)\to \calc(c,t)\]
are weak homotopy  equivalences. 
A topological category is \emph{weakly unital} if any object is the target of an equivalence. (This is equivalent to the condition that any object $c$ allows a \emph{weak unit}, that is, an equivalence $u\colon c\to c$ such that $u\circ u\simeq u$ in $\calc(c,c)$, compare the proof Lemma \ref{lem:translation_Segal_quasicat} (iv).)
A continuous functor $F\colon \calc\to\cald$ between weakly unital topological categories is called \emph{weakly unital} provided $F$ preserves equivalences (equivalently, sends weak units to weak units). A unital category is weakly unital, because then the identity map on an object is a weak unit; similarly a unital functor is weakly unital, because any weak unit of $c$ is homotopic to $\id_c$ in $\calc(c,c)$.

The \emph{nerve} of a topological category $\calc$ is the semi-simplicial space $N_\bullet \calc$ (no degeneracies) whose $n$-simplices are the $n$-strings of composable morphisms in $\calc$. Its classifying space is the \emph{geometric realization} $B\calc$ of $\calc$.

\begin{definition}
Let $P\colon \calc\to \cald$ be a continuous functor. 
\begin{enumerate}
\item $P$ is a \emph{level fibration} if it is a (Serre) fibration on each level of the nerve.
\item $P$ is a \emph{local fibration} if both maps
\begin{align*}
 (P,s,t)\colon  \mor \calc &\to \mor\cald\times_{\ob\cald\times\ob\cald} (\ob\calc\times \ob\calc), \\
 P\colon \ob\calc&\to \ob\cald
\end{align*}
are fibrations. $\calc$ is \emph{locally fibrant} if the unique functor $\calc\to *$ is a local fibration, that is, if the source-target map
\[
(s,t)\colon \mor(\calc)\to \ob(\calc)\times \ob(\calc)
\]
is a fibration. 
 \item A morphism $f\colon c\to c'$ in $\calc$ is \emph{$P$-cartesian} if for all $t\in \ob\calc$, the following commutative square is a homotopy pull-back:
\begin{equation}\label{eq:cartesian_topological}
 \xymatrix{
\calc(t, c) \ar[r]^{f\circ -} \ar[d]^P & \calc(t, c') \ar[d]^P\\
\cald(P(t), P(c)) \ar[r]^{P(f)\circ -} & \cald(P(t), P(c')),
}
\end{equation}
\item $P$ is a \emph{cartesian fibration} if for any morphism $g\colon d\to d'$ and any $c'\in \ob\calc$ such that $P(c')=d'$, there is a $P$-cartesian morphism $f\colon c\to c'$ such that $P(f)=g$.
\item $P$ is a \emph{cocartesian fibration} if $P\op\colon \calc\op\to \cald\op$ is a cartesian fibration.
\end{enumerate}
\end{definition}

\begin{remarks*}
\begin{enumerate}
 \item A local fibration is a level fibration. 
 \item If $P$ is a  local fibration, then all induced maps $\calc(c,c') \to \cald(P(c), P(c'))$ between morphism spaces are fibrations.
\item If $\calc$ and $\cald$ are discrete and unital categories, then the above notion of (co-)cartesian fibration agrees with the usual notion of (co-)cartesian (or Grothendieck) fibration.
\end{enumerate}
\end{remarks*}

\begin{theorem}[Local-to-global principle]\label{thm:main_categorical}
Let 
\[\cald'\xrightarrow{F} \cald \xleftarrow{P} \calc\]
be a diagram of weakly unital, locally fibrant topological categories and weakly unital continuous functors. If  $P$ a level, cartesian and cocartesian fibration, then the pull-back diagram
\begin{equation*}
\xymatrix{
\cald'\times_\cald \calc \ar[r] \ar[d]^{P'} & \calc \ar[d]^P
\\
\cald' \ar[r]^F & \cald
}
\end{equation*}
induces a homotopy pull-back diagram 
\begin{equation*}
\xymatrix{
B(\cald'\times_\cald \calc) \ar[r] \ar[d]^{BP'} & B\calc \ar[d]^{BP}
\\
B\cald' \ar[r]^{BF} & B\cald
}
\end{equation*}
of classifying spaces.
\end{theorem}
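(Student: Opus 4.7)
The plan is to reduce the theorem, via the nerve construction, to a Theorem B-type statement for semi-simplicial (bi-)sets, for which the combination of cartesian and cocartesian fibration hypotheses supplies precisely the base-change condition. Since the nerve $N_\bullet$ preserves pullbacks of (possibly non-unital) topological categories and since $P$ is a level fibration, applying $N_\bullet$ levelwise yields a pullback square of semi-simplicial spaces in which the right-hand vertical is a levelwise Serre fibration. Because the classifying space is the geometric realization of $N_\bullet$, the theorem reduces to showing that this pullback of semi-simplicial spaces realizes to a homotopy pullback of spaces. Passing from the levelwise topological spaces to their singular complexes moves the problem into the realm of bisimplicial sets, at which point the simplicial analogue announced at the end of the section (Theorem~\ref{thm:cocartesian_Quillen_B}) should apply in each fixed bisimplicial degree, once one checks that the (co)cartesian condition on $P$ translates into the corresponding combinatorial condition on $N_\bullet P$.

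The heart of the argument is the simplicial Theorem B. I would prove it by verifying, for each simplex $\sigma$ of the base $N_\bullet\cald$, that the strict fiber of $N_\bullet P$ over $\sigma$ computes the homotopy fiber over $\sigma$. Cartesian lifts of the edges of $\sigma$ produce a comparison map from the strict fiber over the last vertex of $\sigma$ into the strict fiber over $\sigma$; cocartesian lifts furnish a comparison in the opposite direction. Combining the two universal properties should yield a natural zigzag showing that both comparisons are weak equivalences, so that all the restriction maps between fibers over simplices of $N_\bullet\cald$ are weak equivalences. A standard realization argument (for instance a Bousfield--Friedlander-type criterion) then promotes this fibrewise-equivalence picture to the conclusion that the levelwise-fibration pullback square realizes to a homotopy pullback.

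I expect the main obstacle to be the \emph{weakly unital}, non-unital setting: identities exist only up to homotopy through weak units, so every appeal to a ``retraction of the cartesian lift onto the identity'' or to ``composing with $\id$'' must be replaced by a suitable homotopy supplied by a weak unit. I would handle this by modelling the weakly unital locally fibrant topological categories as (semi-)Segal-type objects, using the characterisation of weak units from Lemma~\ref{lem:translation_Segal_quasicat}(iv) to produce the necessary homotopies. An attractive alternative is to bootstrap entirely from the simplicial statement, letting that combinatorial theorem absorb the bookkeeping of weak units once the (co)cartesian condition has been translated; this is also what allows the argument, as the paper emphasises, to specialise further to Quillen's Theorem B in the discrete setting.
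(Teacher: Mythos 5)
Your second paragraph is essentially the paper's own proof of the simplicial-set version (Theorem \ref{thm:main_quasicat}): cocartesian lifts of the simplicial homotopy contracting $\Delta^n$ onto its last vertex show that $X\vert_{\ell(\sigma)}\to X\vert_\sigma$ realizes to an equivalence, cartesian lifts handle the first vertex, and a skeletal-induction realization criterion (Lemma \ref{lem:realization_fibration}, in place of your Bousfield--Friedlander-type appeal) finishes the job. That part of the plan is sound.

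The gap is in the reduction from topological categories to this combinatorial statement. Passing to singular complexes and applying the simplicial theorem ``in each fixed bisimplicial degree'' does not work: for fixed singular degree $q$, the resulting semi-simplicial set is the nerve of the category of $\Delta^q$-families of objects and morphisms, and the hypothesis that $P$ is a cartesian fibration of topological categories only guarantees cartesian lifts \emph{pointwise} (one morphism of $\cald$, one object of $\calc$ over its target); it provides no continuous $\Delta^q$-parametrized choice of lifts, and the homotopy-pullback condition \eqref{eq:cartesian_topological} does not descend to a strict horn-filling condition in each singular degree. Moreover, even granting lifts, the combinatorial argument needs genuine degeneracies (Lemma \ref{lem:path_lift_in_cocartesian_fibration} lifts maps out of $I\times\Delta^1$), and the nerve of a non-unital category has none; the paper explicitly warns that freely adjoining identities destroys (co)cartesian morphisms. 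The paper's actual route is: Reedy fibrant replacement (Lemma \ref{lem:Reedy_fibrant_replacement}), then forgetting the topology entirely via $(-)^\delta$ --- at which point pointwise lifts suffice and the $\pi_0$-surjectivity and homotopy-pullback conditions become strict lifting conditions by Reedy fibrancy --- then invoking the degeneracy-existence theorems of \cite{degen} to equip $\calc^\delta$ and $\cald^\delta$ with compatible simplicial structures built from the weak units, and finally showing that $\vert\calc^\delta\vert\to\vert\calc\vert$ is an equivalence (Lemma \ref{lem:translation_Segal_quasicat}(v)). Your closing suggestion to ``let the combinatorial theorem absorb the bookkeeping of weak units'' is precisely the step that requires this degeneracy construction; as written, the proposal does not supply it.
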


Here $\cald'\times_\cald \calc$ denotes the pull-back in the category of topological categories, explicitly given by the pull-back of spaces of objects and morphisms.

\begin{remark}
The condition on weak unitality cannot be dropped, as can be seen by taking $P$ the functor $\dell\colon \cob_{d, \dell} \to \cob_{d-1}$ in Genauer's sequence (see section \ref{sec:genauer}), and $F$ the inclusion of the subcategory consisting of only the object $\emptyset$ and no morphisms.
\end{remark}

Generally, it requires some care to make the composition operation in the cobordism category strictly associative, and for many perspectives it is easier and more natural to view the cobordism category as a semi-Segal space.

\begin{definition}
A \emph{semi-Segal space} is a semi-simplicial space  $\calc$ that satisfies the \emph{Segal condition} that for each $n$, the Segal map
\[\calc_n \to \calc_1 \times^h_{\calc_0} \dots \times^h_{\calc_0} \calc_1\]
into the $n$-fold iterated homotopy pull-back is  a weak homotopy equivalence.
\end{definition}

A map of semi-Segal spaces is, by definition, a map of the underlying semi-simplicial spaces. 

We identify a topological category $\calc$ with a semi-simplicial space, via the nerve. With this convention, a locally fibrant topological category is a semi-Segal space, with the Segal map being a homeomorphism
into the actual pull-back.

Next we define a notion of $P$-(co-)cartesian morphism in a semi-Segal space $\calc$. For $\sigma\in \calc_k$ and $n\geq k$, we denote by $\calc_n/\sigma$ the homotopy fiber, over $\sigma$, of the map $\calc_n\to \calc_k$ sending an $n$-simplex to its last $k$-simplex. With this notation, we define:

\begin{definition}
Let $P\colon \calc\to \cald$ be a map of semi-Segal spaces.
\begin{enumerate}
 \item  A 1-simplex $f\colon c\to d$ in $\calc$ (\emph{i.e.,} $f\in \calc_1$ with $d_1(f)=c$ and $d_0(f)=d$) is called \emph{$P$-cartesian} if the square
\begin{equation}\label{eq:cartesian_semisimplicial}
 \xymatrix{
 \calc_2/f \ar[r]^{d_1} \ar[d]^P & \calc_1/d \ar[d]^P\\
 \cald_2/P(f) \ar[r]^{d_1} & \cald_1/P(d)
}
\end{equation}
is a homotopy pull-back square. We denote by $\calc_1^{\cart{P}}\subset \calc_1$ the subset of $P$-cartesian morphisms.
\item $P$ is called \emph{cartesian fibration} if the map
\[
 (P, d_0)\colon \calc_1^{\cart{P}} \to \cald_1 \times^h_{\cald_0} \calc_0
\]
into the homotopy pull-back is $\pi_0$-surjective (that is, surjective on the level of path components). $P$ is called \emph{cocartesian fibration} if $P\op\colon \calc\op\to \cald\op$ is a cartesian fibration.
\end{enumerate}
\end{definition}

\begin{remark}\label{rem:cartesian_topological_vs_Segal}
Let $P\colon \calc\to \cald$ be a map of locally fibrant topological categories.  
\begin{enumerate}
 \item A morphism in $\calc$ is $P$-cartesian in the sense of topological categories if and only if it is $P$-cartesian in the sense of semi-Segal spaces.
 \item If $P$ is a cartesian fibration in the sense of topological categories, then it is a cartesian fibration in the sense of semi-Segal spaces.
 \item The converse of (ii) holds if $P$ is a local fibration.
\end{enumerate}
\end{remark}

\begin{proof}
The space $\calc_1/d$ is equivalent to the space of all morphisms in $\calc$ ending at $d$; also $\calc_2/f$ is equivalent to the actual fiber of $\calc_2\to \calc_1$ over $f$, which agrees with the space of morphisms ending at $c$. Under this equivalence the upper horizontal map in \eqref{eq:cartesian_semisimplicial} corresponds to the map given by composition with $f$. Similarly, the lower horizontal map is equivalent to composition with $P(f)$. Then the square \eqref{eq:cartesian_topological} is obtained from \eqref{eq:cartesian_semisimplicial} by taking homotopy fibers, over $t\in \calc_0$, along the first-vertex map. So \eqref{eq:cartesian_semisimplicial} is homotopy cartesian if and only if \eqref{eq:cartesian_topological} is homotopy cartesian for all $t\in \calc_0$.  This shows (i). 

Now, if $P$ is a cartesian fibration in the sense of topological categories, then the map
\[(P, d_0)\colon \calc_1^{\cart P} \to \cald_1\times_{\cald_0} \calc_0\]
is surjective, hence $\pi_0$-surjective, and the pull-back on the right is a homotopy pull-back, by local fibrancy of $\cald$. This shows (ii).

Finally, we note that $\calc_1^{\cart P}\subset \calc_1$ consists of entire path components. Therefore, if $P$ is a local fibration, the map $(P, d_0)$ in question is a fibration, and $\pi_0$-surjectivity implies actual surjectivity. This shows (iii).
\end{proof}

\begin{definition}
Let $\calc$ be a semi-Segal space and $f\in \calc_1$. Then $f$ is called \emph{equivalence} if it is both $P$-cartesian and $P$-cocartesian, for the unique map $P\colon \calc\to *$ to the terminal object. We denote by $\calc_1^\simeq\subset \calc_1$ the subspace of equivalences. $\calc$ is called \emph{weakly unital} if the map
\[d_0\colon \calc_1^\simeq\to \calc_0\]
is $\pi_0$-surjective. A map of weakly unital semi-Segal spaces $F\colon \calc\to \cald$ is called \emph{weakly unital} if it preserves equivalences.
\end{definition}

\begin{remark}
Let $P\colon \calc\to \cald$ be a map of locally fibrant topological categories. 
\begin{enumerate}
 \item A morphism in $\calc$ is an equivalence in the sense of topological categories if and only if it is an equivalence in the sense of semi-Segal spaces.
 \item $\calc$ (resp., $P$) is weakly unital in the sense of topological categories if and only if it is weakly unital in the sense of semi-Segal spaces.
\end{enumerate}
\end{remark}

\begin{proof}
(i) is a special case of Remark \ref{rem:cartesian_topological_vs_Segal}, (i). Also, 
\[d_0\colon \calc_1^\simeq \to \calc_0\]
is a fibration by local fibrancy and the fact that $\calc_1^\simeq\subset \calc_1$ consists of entire path components; so surjectivity of this map is equivalent to $\pi_0$-surjectivity. This shows (ii) for $\calc$. The claim for $P$ is a direct consequence of (i).
\end{proof}


\begin{theorem}[Local-to-global principle, semi-Segal version]\label{thm:main_Segal}
Let 
\[\cald'\xrightarrow{F} \cald \xleftarrow{P} \calc\]
be a diagram of weakly unital semi-Segal spaces and weakly unital maps. If  $P$ a level, cartesian and cocartesian fibration, then the  pull-back diagram
\begin{equation}\label{eq:main_diagram}
\xymatrix{
\cald'\times_\cald \calc \ar[r] \ar[d]^{P'} & \calc \ar[d]^P
\\
\cald' \ar[r]^F & \cald
}
\end{equation}
induces a homotopy pull-back diagram of spaces
\[
\xymatrix{
B(\cald'\times_\cald \calc) \ar[r] \ar[d]^{BP'} & B\calc \ar[d]^{BP}
\\
B\cald' \ar[r]^{BF} & B\cald,
}
\]
through application of the geometric realization functor.
\end{theorem}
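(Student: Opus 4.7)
The plan is to reduce Theorem \ref{thm:main_Segal} to the simplicial-set version stated at the end of this section, by applying the singular complex functor $\sing$ level-wise to each of the semi-Segal spaces in the diagram. A semi-Segal space $\calc$ is sent to the bi-semi-simplicial set $[n] \mapsto \sing(\calc_n)$, which is a semi-simplicial object in Kan complexes. Because $\sing$ preserves weak equivalences and homotopy pull-backs between Kan complexes, the Segal condition passes through. Moreover, the unit map $X \to |\sing X|$ is a weak equivalence for every space $X$, so the geometric realization of the level-wise pull-back of the $\sing$-images is weakly equivalent to the realization of the original pull-back of semi-Segal spaces. In particular, \eqref{eq:main_diagram} realizes to a homotopy pull-back of spaces if and only if its $\sing$-image realizes to a homotopy pull-back of simplicial sets.

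Next I would verify that each hypothesis transfers to the corresponding hypothesis in the simplicial-set version. The level-fibration property passes because $\sing$ sends Serre fibrations to Kan fibrations. Weak unitality transfers because $\sing$ preserves $\pi_0$-surjectivity on the defining maps. Most importantly, the defining square \eqref{eq:cartesian_semisimplicial} of $P$-cartesian morphisms is a homotopy pull-back of spaces if and only if its image under $\sing$ is a homotopy pull-back of simplicial sets, so $P$-cartesian (and dually $P$-cocartesian) morphisms correspond exactly to their simplicial-set analogues; the $\pi_0$-surjectivity condition in the definition of a cartesian fibration also passes through verbatim. With all hypotheses translated, the simplicial-set version applies to the diagram $\sing \cald' \to \sing \cald \leftarrow \sing \calc$, and after geometric realization one recovers the conclusion of Theorem \ref{thm:main_Segal}.

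I expect the main obstacle to lie not in the reduction itself, which is essentially formal once one knows that $\sing$ plays well with homotopy pull-backs, but in the bookkeeping across two models: namely, ensuring compatibility between the bi-semi-simplicial input produced level-wise by $\sing$ and whatever precise form the simplicial-set version takes, particularly if it is phrased for genuine (one-fold) simplicial sets rather than for semi-simplicial objects in simplicial sets. If an additional diagonal or totalization step is needed, one must check that it transports the weak-unitality and (co)cartesian-fibration data correctly; this is standard simplicial manipulation but must be set up coherently with the Segal condition so that no homotopy-theoretic information is lost. Assuming the simplicial-set version is formulated to accept this output, the rest of the argument is a direct translation.
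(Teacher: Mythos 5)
Your reduction strategy (pass to simplicial sets, apply Theorem \ref{thm:main_quasicat}, realize) is the right general shape, and it is the shape of the paper's proof, but the bridge you propose --- applying $\sing$ level-wise --- does not reach the simplicial-set version, and the difficulty you defer to ``bookkeeping'' is in fact the entire content of the argument. The hypotheses of Theorem \ref{thm:main_quasicat} are \emph{strict combinatorial} conditions: $P$-cartesian edges and (co)cartesian fibrations of simplicial sets are defined by actual horn-lifting, not by homotopy pull-backs. The corresponding hypotheses in the semi-Segal setting are homotopy-invariant (homotopy pull-back squares, $\pi_0$-surjectivity). Your claim that these ``correspond exactly'' under $\sing$ is not justified and is false without further preparation: to convert homotopy conditions into strict lifting properties one must first replace the diagram by a level-equivalent one in which all maps are Reedy fibrations (the paper's Lemmas \ref{lem:Reedy_fibrant_replacement} and \ref{lem:cartesian_preserved_under_level_equivalences}), and then one passes to an honest (semi-)simplicial set by taking the \emph{discrete} points $(-)^\delta$, not the singular complex --- the point being that Reedy fibrancy makes the required horn fillers exist as actual points (Lemma \ref{lem:translation_Segal_quasicat}(i)--(iii)). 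Your bi-(semi-)simplicial object $\sing(\calc_\bullet)$ has no evident diagonal or totalization for which the horn-lifting form of the (co)cartesian condition can be checked.

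The second, and decisive, omission is that your outline makes no essential use of weak unitality, whereas the theorem is false without it (see the Remark following Theorem \ref{thm:main_categorical}). In the paper, weak unitality enters exactly at the step you skip: after discretizing, one must (a) endow $\calc^\delta$ and $\cald^\delta$ with compatible simplicial structures so that Theorem \ref{thm:main_quasicat} applies, and (b) show that $\vert\calc^\delta\vert\to\vert\calc\vert$ is a weak equivalence so that the conclusion transfers back. Both (a) and (b) rest on the degeneracy-addition theorems of \cite{degen}, whose hypotheses (existence of homotopy-idempotent self-equivalences at every object, relative to $P$) are verified precisely from weak unitality; and one must separately check (Lemma \ref{lem:Segal_pullback}) that the pull-back $\cald'\times_\cald\calc$ is again weakly unital so that the same transfer applies to the fourth corner. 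An argument in which weak unitality only appears as ``$\sing$ preserves $\pi_0$-surjectivity'' cannot be complete.
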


Here, $\cald'\times_\cald \calc$ denotes the pull-back in the category of semi-simplicial spaces, given by the pull-back of spaces in each simplicial degree.

Finally, we give a version of the local-to-global principle in the context of simplicial sets. This result is also a consequence of recent work by Ayala--Francis \cite[5.17]{AF}; we will supply a direct proof in the next section. 

Recall that a map $p\colon X\to Y$ between simplicial sets is called an \emph{inner fibration} if for every $n>1$ and every $0<i<n$, every solid diagram
\begin{equation}\label{eq:horn_lifting}
\xymatrix{
 {\Lambda_i^n} \ar[d]_{\mathrm{incl}} \ar[r] & X \ar[d]^p\\
 \Delta^n \ar[r]^\sigma \ar@{.>}[ru]  & Y
}
\end{equation}
has a dotted diagonal lift. A 1-simplex $f$ in $X$ is called \emph{$p$-cartesian} if every solid diagram \eqref{eq:horn_lifting} has a dotted diagonal lift, provided $i=n>1$ and the last edge of $\sigma$ is $f$. $p$ is a \emph{cartesian fibration} if it is an inner fibration and if for every $g\in Y_1$ and any lift $c$ of $d_0(g)$ through $p$, there exists a lift $f\in X_1$ of $g$ through $p$ such that $d_0(f)=c$; dually $p$ is a \emph{cocartesian fibration} of $p\op$ is a cartesian fibration.

\begin{theorem}[Local-to-global principle, simplicial set version]\label{thm:main_quasicat}
A cartesian and cocartesian fibration $P\colon X\to Y$ between simplicial sets is a realization-fibration; that is, for any map $Y'\to Y$ of simplicial sets, the pull-back diagram
\begin{equation*}
\xymatrix{
Y'\times_Y X \ar[r] \ar[d]^{P'} & X \ar[d]^P
\\
Y' \ar[r]^F & Y
}
\end{equation*}
is a homotopy pull-back diagram of simplicial sets. 
\end{theorem}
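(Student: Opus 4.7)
The plan is to prove the theorem by a straightening argument. Since a cartesian fibration is by definition an inner fibration, replacing $Y$ by a Joyal fibrant replacement does not alter the relevant geometric realizations, and so I would first reduce to the case where $Y$ (and hence $X$) is a quasicategory. For a cocartesian fibration $P\colon X\to Y$ of quasicategories, the straightening/unstraightening equivalence supplies a functor $\Phi\colon Y\to \sCat$ whose value at $y$ is equivalent to the fiber $X_y$ and whose action on an edge $g\colon y\to y'$ is the cocartesian transport $g_!\colon X_y\to X_{y'}$; moreover $|X|$ is naturally identified with the $\infty$-categorical colimit of the composition $|\Phi|\colon Y\to \Spaces$, $y\mapsto |X_y|$.

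The central observation is that if $P$ is additionally cartesian, then each $g_!$ admits a right adjoint---the cartesian transport $g^*\colon X_{y'}\to X_y$---whose unit and counit can be produced directly from the lifting property \eqref{eq:horn_lifting} for cartesian and cocartesian edges. Adjunctions between quasicategories induce inverse homotopy equivalences on classifying spaces: the unit and counit become $1$-simplices in suitable functor quasicategories, and thus homotopies between the relevant composites and the identities after realization. It follows that $|\Phi|(g)$ is a weak equivalence of spaces for every edge $g$ of $Y$. Consequently, $|\Phi|$ factors (uniquely up to contractible choice) through the localization of $Y$ at all morphisms, which is the Kan complex $|Y|$. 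The resulting functor $|Y|\to \Spaces$ unstraightens to a left fibration over $|Y|$ whose geometric realization is a Serre fibration $\widehat P$ modelling $|P|$. Because pullbacks of Serre fibrations are homotopy pullbacks, the pullback of $\widehat P$ along $|F|\colon |Y'|\to |Y|$ computes the homotopy pullback $|Y'|\times^h_{|Y|}|X|$; on the other hand, the same straightening procedure applied to the pullback cocartesian fibration $Y'\times_Y X\to Y'$ identifies its realization with this pullback, yielding the theorem.

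The main obstacle is to make precise the interaction between geometric realization and straightening---in particular, that $|X|$ genuinely models the colimit of $|\Phi|$, and that the fiberwise pair $(g_!, g^*)$ is a bona fide $\infty$-categorical adjunction rather than merely a compatible pair of functors between fibers. The first point I would handle by invoking Lurie's straightening theorem together with the fact that colimits of $\Spaces$-valued functors are computed by the unstraightening applied to $\pi_0$ of sections; the second point I would derive from the universal property that characterizes cartesian (resp.\ cocartesian) lifts, which exhibit $g^*$ as a right relative adjoint to $g_!$ and thereby produce the required unit/counit. A subsidiary obstacle is verifying that passing to the Joyal fibrant replacement genuinely preserves the cartesian/cocartesian structure, but this is automatic since both conditions are expressed entirely in terms of lifting properties that are stable under weak equivalence.
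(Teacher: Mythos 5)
Your strategy is genuinely different from the one in the paper, and in outline it is a viable (indeed, the now-standard $\infty$-categorical) proof. The paper argues elementarily: Lemma \ref{lem:path_lift_in_cocartesian_fibration} lifts the simplicial homotopy contracting $\Delta^n$ to its last (resp.\ first) vertex, so that for every simplex $\sigma$ of $Y$ the inclusions $X\vert_{i(\sigma)}\to X\vert_\sigma\leftarrow X\vert_{\ell(\sigma)}$ realize to weak equivalences (Corollaries \ref{cor:fiber_transport} and \ref{cor:fiber_transport_back_and_forth}); the theorem then follows from Lemma \ref{lem:realization_fibration}, a skeletal induction in the spirit of Waldhausen's Lemma 1.4.B, which needs no straightening, no hypotheses on $Y$, and no input beyond the gluing lemma for homotopy pushouts. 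Your route trades this for straightening/unstraightening, the identification of $\vert X\vert$ with $\colim_Y\vert\Phi\vert$, and the adjunction $g_!\dashv g^*$ coming from bicartesianness; what it buys is a conceptual explanation (the classified functor inverts all edges, hence descends to the groupoid completion of $Y$, where colimits of space-valued functors are total spaces of Kan fibrations and are therefore pullback-stable) and a formulation that generalizes beyond simplicial sets. The key inputs you list --- that an adjunction of quasicategories realizes to a pair of inverse homotopy equivalences, and the colimit formula for $\vert X\vert$ --- are correct and available in the literature.

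One step of your write-up is wrong as stated, though repairable. The reduction to a quasicategory base via Joyal fibrant replacement is not ``automatic since both conditions are expressed in terms of lifting properties that are stable under weak equivalence'': lifting properties are not invariant under weak equivalence, and an inner (or cartesian) fibration over $Y$ does not extend over a fibrant replacement $\widehat Y$ for free; one would have to invoke the Quillen equivalence of the (co)cartesian model structures along the categorical equivalence $Y\to\widehat Y$ and then check that the realization-fibration property is invariant under the resulting zigzag. The cleanest fix is to drop the reduction entirely: straightening of (co)cartesian fibrations and the colimit formula work over an arbitrary simplicial set base, and the fiberwise adjunction only uses the restriction $X\vert_g\to\Delta^1$, a bicartesian fibration over $\Delta^1$ whose total space and fibers are automatically quasicategories even when $Y$ is not.
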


\section{Proof of the local-to-global principle}\label{sec:proof_ltgp}

\subsection{The simplicial set case}

The proof in the simplicial set version is an easy combination of two well-known results, the first of which says that simplicial (forward) homotopies may be lifted along cocartesian fibrations.

\begin{lemma}\label{lem:path_lift_in_cocartesian_fibration}
Let $p\colon X\to Y$ be a cocartesian fibration, and let $J\to I$ be an injective map of simplicial sets. If in the solid commutative diagram
\[\xymatrix{
 I\times \{0\}\cup_{J\times \{0\}} J\times \Delta^1 \arinclinv[d] \ar[rr]^(0.6){F_0} && X \ar[d]^p\\
 I\times \Delta^1   \ar@{.>}[rru]^F \ar[rr]^f && Y
}\]
the map $F_0$ sends any edge $(j,0)\to (j,1)$ (for $j\in J_0$) to a $p$-cocartesian edge, then there exists a dotted lift $F$ which keeps the diagram commutative, and which in addition sends any edge $(i,0)\to (i,1)$ (for $i\in I_0$) to a $p$-cocartesian edge.
\end{lemma}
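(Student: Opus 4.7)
The plan is to construct the lift $F$ by induction on the cells of $I$ not in $J$, ordered by dimension, reducing at each step to extending along a single prism $\Delta^n \times \Delta^1$. First, I would present $I$ as a transfinite colimit $J = I^{(-1)} \subset I^{(0)} \subset I^{(1)} \subset \cdots$ obtained by successively attaching the non-degenerate simplices of $I \setminus J$. This reduces the problem to the following: for each non-degenerate $n$-simplex $\tau\colon \Delta^n \to I$ not in $J$, extend the partial lift along the pushout-product inclusion
$$\Delta^n \times \{0\} \;\cup\; \partial \Delta^n \times \Delta^1 \;\hookrightarrow\; \Delta^n \times \Delta^1.$$

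In the base case $n = 0$, the data amounts to a vertex $c_0 = F_0(\tau, 0) \in X_0$ and a $1$-simplex $g = f(\tau, \cdot) \in Y_1$ with $d_1 g = p(c_0)$; the defining property of a cocartesian fibration supplies a $p$-cocartesian lift $\tilde g$ of $g$ with $d_1 \tilde g = c_0$, which I take as the value of $F$ on $\{\tau\} \times \Delta^1$. For $n \geq 1$, I would use the standard triangulation of the prism into the $(n+1)$-simplices $\sigma_0, \ldots, \sigma_n$, where $\sigma_k$ has vertices $(0, 0), \ldots, (k, 0), (k, 1), \ldots, (n, 1)$, and attach them in the reverse order $\sigma_n, \sigma_{n-1}, \ldots, \sigma_0$. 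A direct bookkeeping check shows that for $1 \leq k \leq n$, the attachment of $\sigma_k$ to the union of the initial subcomplex with the already-attached $\sigma_m$ (for $m > k$) is a pushout along the inner horn $\Lambda^{n+1}_k \hookrightarrow \Delta^{n+1}$---the unique missing face being $d_k \sigma_k = \sigma_{k-1} \cap \sigma_k$---which can be filled because $p$ is an inner fibration. The final attachment of $\sigma_0$ is a pushout along the outer horn $\Lambda^{n+1}_0 \hookrightarrow \Delta^{n+1}$; crucially, its initial edge $\Delta^{\{0,1\}} \subset \sigma_0$ is the vertical edge $(\tau(0), 0) \to (\tau(0), 1)$, which is $p$-cocartesian by inductive hypothesis (it was produced at the $0$-cell attachment stage, or was already so in the hypothesis on $F_0$). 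Hence this last horn can be filled by the defining lifting property of a cocartesian fibration.

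The main hurdle is really the combinatorial verification just sketched: one must check that for $j \neq k, k+1$ the face $d_j \sigma_k$ lies in $\partial \Delta^n \times \Delta^1$ (hence was part of the initial data), and that for $k < n$ the face $d_{k+1} \sigma_k = \sigma_k \cap \sigma_{k+1}$ is already attached via $\sigma_{k+1}$, while for $k = n$ the face $d_{n+1} \sigma_n$ coincides with $\Delta^n \times \{0\}$. This is a standard but slightly fiddly calculation on the prism triangulation. A final observation is that no new vertical edge $(i, 0) \to (i, 1)$ is introduced during the $n$-cell steps for $n \geq 1$---all such edges lie in $\partial \Delta^n \times \Delta^1$ and hence were fixed at earlier stages---so every vertical edge of the completed $F$ is $p$-cocartesian, coming either from the hypothesis on $F_0$ or from the $0$-cell attachment stage.
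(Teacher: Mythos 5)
Your proof is correct and follows the same route as the paper: reduce by the skeletal filtration to the case $\dell\Delta^n\subset\Delta^n$, handle $n=0$ directly from the definition of a cocartesian fibration, and for $n>0$ fill the prism. The only difference is that for $n>0$ the paper simply cites \cite[2.4.1.8]{Lurie_HTT}, whereas you reprove that result by hand via the standard triangulation of $\Delta^n\times\Delta^1$ into the simplices $\sigma_n,\dots,\sigma_0$ (inner horns, then one outer horn filled using the cocartesian initial edge); your combinatorial bookkeeping there checks out.
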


\begin{proof}
By induction and a colimit argument, it is enough to consider the case where $J\to I$ is the inclusion of $\dell\Delta^n$ into $\Delta^n$. If $n=0$, then such a lift exists by definition of  a cocartesian fibration. If $n>0$, then this is \cite[2.4.1.8]{Lurie_HTT}. 
\end{proof}

For $P\colon X\to Y$, and a simplex $\sigma\colon \Delta^n\to Y$ of $Y$, we write $X\vert_\sigma:=\Delta^n\times_Y X$.

\begin{corollary}\label{cor:fiber_transport}
Let $P\colon X\to Y$ be  a cocartesian fibration of simplicial sets. For any simplex $\sigma$ in $Y$, with last vertex $\ell(\sigma)$, the inclusion map
\[X\vert _{\ell(\sigma)}\to X\vert _{\sigma}\]
is a weak homotopy equivalence. 
\end{corollary}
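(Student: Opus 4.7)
The plan is to exhibit $X\vert_{\ell(\sigma)}$ as a simplicial deformation retract (after realization) of $X\vert_\sigma$, by lifting through $P$ the natural simplicial collapse of $\Delta^n$ onto its last vertex.

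First I will reduce to the case $Y=\Delta^n$ and $\sigma=\id$. Cocartesian fibrations are stable under pullback (the horn-lifting conditions and the existence of cocartesian lifts transfer formally), so the projection $P'\colon X\vert_\sigma\to \Delta^n$ is itself a cocartesian fibration, and $X\vert_{\ell(\sigma)}$ is precisely its fibre $X_n$ over the last vertex $n\in \Delta^n_0$. Thus it suffices to show that, for a cocartesian fibration $P\colon X\to \Delta^n$, the inclusion $\iota\colon X_n\hookrightarrow X$ realizes to a weak equivalence.

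Next I introduce the simplicial map $H\colon \Delta^n\times\Delta^1\to \Delta^n$ induced by the order-preserving map $(i,0)\mapsto i$, $(i,1)\mapsto n$. It gives a simplicial homotopy from $\id_{\Delta^n}$ to the constant map at $n$, and crucially is constant at $n$ on $\{n\}\times\Delta^1$. I apply Lemma \ref{lem:path_lift_in_cocartesian_fibration} with $I=X$, $J=\emptyset$, $F_0=\id_X$, and lower map $H\circ(P\times\id)\colon X\times\Delta^1\to\Delta^n$; this produces a lift $F\colon X\times\Delta^1\to X$ extending $\id_X$ at time $0$. Since $P\circ F|_{X\times\{1\}}$ is constant at $n$, the time-$1$ map factors as $\iota\circ r$ for a unique $r\colon X\to X_n$.

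After geometric realization, $F$ is a homotopy from $\id_{|X|}$ to $|\iota|\circ|r|$. For the reverse composite, I restrict $F$ to $X_n\times\Delta^1$: because $P$ is constant at $n$ on $X_n$ and $H$ fixes the vertex $n$ pointwise across $\Delta^1$, the restriction $F|_{X_n\times\Delta^1}$ has image in $X_n$ and is a simplicial homotopy from $\id_{X_n}$ to $r\circ\iota$. Consequently $|\iota|$ is a homotopy equivalence. The only point that calls for any care is the fibrewise factorisation of $F|_{X_n\times\Delta^1}$ through $X_n$, which is forced by the specific shape of $H$ keeping the last vertex fixed; no substantial obstacle is expected beyond this.
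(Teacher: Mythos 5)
Your proposal is correct and is exactly the paper's argument: the paper's one-sentence proof likewise obtains a simplicial homotopy inverse by lifting, via Lemma \ref{lem:path_lift_in_cocartesian_fibration}, the standard simplicial homotopy $\Delta^n\times\Delta^1\to\Delta^n$ collapsing onto the last vertex. You merely spell out the details (the reduction to $Y=\Delta^n$, the factorization of the time-$1$ map through the fibre, and the restriction to $X_n\times\Delta^1$), all of which check out.
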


Indeed, a simplicial homotopy inverse is obtained by lifting the standard simplicial homotopy $\Delta^n\times \Delta^1\to \Delta^n$ that deforms the $n$-simplex to its last vertex, where $n=\vert \sigma\vert$. 

%

\begin{corollary}\label{cor:fiber_transport_back_and_forth}
Let $P\colon X\to Y$ be  a  cartesian and cocartesian fibration of simplicial sets. Then, for any simplex $\sigma$ of $Y$, the maps induced by inclusion of the first and last vertex respectively,
\[X\vert_{i(\sigma)} \to X\vert_\sigma \leftarrow X\vert_{\ell(\sigma)}\]
are weak homotopy equivalences. 
\end{corollary}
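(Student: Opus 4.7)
The plan is to reduce this corollary to two applications of Corollary \ref{cor:fiber_transport}: one to $P$ itself, and one to the opposite fibration $P\op$. The right-hand map $X\vert_{\ell(\sigma)}\to X\vert_\sigma$ is immediate from Corollary \ref{cor:fiber_transport} applied to the cocartesian fibration $P$; no further work is required on that side.

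For the left-hand map $X\vert_{i(\sigma)}\to X\vert_\sigma$, I would pass to the opposite fibration $P\op\colon X\op\to Y\op$. The hypothesis that $P$ is cartesian is precisely the assertion that $P\op$ is cocartesian, so Corollary \ref{cor:fiber_transport} applies to $P\op$. Under the standard order-reversing identification $\Delta^n\to \Delta^n$, $i\mapsto n-i$, the simplex $\sigma$ of $Y$ corresponds to a simplex $\sigma\op$ of $Y\op$ whose last vertex is $i(\sigma)$, and the fiber $X\op\vert_{\sigma\op}$ is canonically identified with $(X\vert_\sigma)\op$. Since passage to the opposite simplicial set induces a canonical homeomorphism on geometric realizations (via the same vertex-reversal), the conclusion of Corollary \ref{cor:fiber_transport} for $P\op$ transports back to the statement that $X\vert_{i(\sigma)}\to X\vert_\sigma$ realizes to a weak equivalence.

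Putting the two legs together, the zigzag in the statement realizes to a weak equivalence. I do not expect any genuine obstacle: the only thing to verify beyond Corollary \ref{cor:fiber_transport} is the compatibility of the ``opposite'' construction with geometric realization and with pullback fibers, which is entirely formal. The substantive content is already present in Corollary \ref{cor:fiber_transport}, and ultimately in the lifting property of Lemma \ref{lem:path_lift_in_cocartesian_fibration}.
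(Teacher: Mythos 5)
Your proposal is correct and is exactly the argument the paper intends: the last-vertex map is Corollary \ref{cor:fiber_transport} applied to $P$, and the first-vertex map is the same corollary applied to $P\op$ (cartesian for $P$ meaning cocartesian for $P\op$), transported back via the order-reversing identification, which is compatible with realization and with pullback fibers. The paper in fact leaves this dualization implicit and states the corollary without proof, so you have simply written out the intended routine verification.
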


Thus, the simplicial version of the local-to-global principle follows from:

\begin{lemma}\label{lem:realization_fibration}
Let $P\colon X\to Y$ be  a map of simplicial sets such  that for each simplex $\sigma$ of $Y$, the inclusion maps $Y\vert_{i(\sigma)}\to Y\vert_\sigma \leftarrow Y\vert_{\ell(\sigma)}$ are weak homotopy equivalences. Then, $P$ is a realization-fibration.
\end{lemma}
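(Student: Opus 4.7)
My plan is to proceed in two stages. Stage~1, the conceptual core, bootstraps the two-vertex fiber-equivalence hypothesis to an all-vertex statement. Stage~2 then invokes standard technology---Dold's theorem on quasifibrations applied along the skeletal filtration of $Y$, together with the homotopy-colimit description of a general $Y'$ in terms of its simplices---to reach the global conclusion.

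\emph{Stage 1.} I claim that for every simplex $\sigma\colon\Delta^n\to Y$ and every index $k\in\{0,\ldots,n\}$, the inclusion $X\vert_{\sigma(k)}\to X\vert_\sigma$ realizes to a weak equivalence. The cases $k=0$ and $k=n$ are the hypothesis applied to $\sigma$ itself. For an intermediate $k$, set $\tau=\sigma\vert_{\{0,\ldots,k\}}$ and $\tau'=\sigma\vert_{\{k,\ldots,n\}}$; then the pushout decomposition $\Delta^n=\Delta^{\{0,\ldots,k\}}\cup_{\{k\}}\Delta^{\{k,\ldots,n\}}$ pulls back (along $\sigma$ and $P$) to a pushout
\[
\xymatrix{
X\vert_{\sigma(k)} \arincl[r] \arincl[d] & X\vert_{\tau'} \arincl[d] \\
X\vert_\tau \arincl[r] & X\vert_\sigma
}
\]
of subcomplex inclusions (since pullback preserves monomorphisms), and hence a homotopy pushout on realization. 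Applying the hypothesis to $\tau$ (for which $\sigma(k)=\ell(\tau)$) and to $\tau'$ (for which $\sigma(k)=i(\tau')$) gives that both maps out of $X\vert_{\sigma(k)}$ are weak equivalences. The homotopy-pushout property then forces the opposite map $X\vert_\tau\to X\vert_\sigma$ to be a weak equivalence, and so the composite $X\vert_{\sigma(k)}\to X\vert_\tau\to X\vert_\sigma$ is a weak equivalence as well.

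\emph{Stage 2.} Since $\vert\Delta^n\vert$ is contractible, Stage~1 is equivalent to the statement that $\vert X\vert_\sigma\vert\to\vert\Delta^n\vert$ is a quasifibration for every simplex $\sigma$ of $Y$. I deduce that $\vert P\vert\colon\vert X\vert\to\vert Y\vert$ itself is a quasifibration by induction on the skeletal filtration $Y^{(0)}\subseteq Y^{(1)}\subseteq\cdots$: the inductive step pulls back the standard attaching pushout $\coprod\dell\Delta^n\to Y^{(n-1)}$, $\coprod\Delta^n\to Y^{(n)}$ (over the non-degenerate $n$-simplices) along $P$, producing a pushout of three quasifibrations---the $(n-1)$-skeleton by the inductive hypothesis, the new $n$-simplices by Stage~1, and the attaching boundaries by a minor variant of Stage~1 applied to $\dell\Delta^n$. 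Dold's theorem on quasifibrations, whose fiber-coherence hypothesis is supplied precisely by Stage~1, then assembles these into a quasifibration over $Y^{(n)}$. Passing to the directed colimit over $n$ gives the global quasifibration $\vert P\vert$. Finally, for any $F\colon Y'\to Y$, Milnor's theorem identifies $\vert Y'\times_Y X\vert$ with $\vert Y'\vert\times_{\vert Y\vert}\vert X\vert$; writing $Y'$ as a homotopy colimit of its simplices reduces the realization-fibration conclusion to the case $Y'=\Delta^m$, and the resulting square is a homotopy pullback by comparing the fiber (identified by Stage~1) with the homotopy fiber (identified by the quasifibration property of $\vert P\vert$) and invoking the contractibility of $\vert\Delta^m\vert$.

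The real conceptual content is entirely in Stage~1, where a single pushout manipulation bootstraps the two-vertex hypothesis to the all-vertex version. The main technical obstacle is Stage~2, which requires careful verification of Dold's hypotheses at each inductive step together with standard formalism of homotopy colimits; this is essentially bookkeeping once Stage~1 is in place.
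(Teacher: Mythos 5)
Your Stage 1 contains a concrete error: $\Delta^n$ is \emph{not} the pushout $\Delta^{\{0,\ldots,k\}}\cup_{\Delta^{\{k\}}}\Delta^{\{k,\ldots,n\}}$. That union is a proper subcomplex of $\Delta^n$ for $0<k<n$ (for $n=2$, $k=1$ it is the spine, i.e.\ two edges, missing the long edge and the interior), so the lower-right corner of your square is $X\vert_{\tau\cup\tau'}$, not $X\vert_\sigma$, and the argument does not reach $X\vert_\sigma$. The conclusion of Stage 1 is nevertheless true and needs no pushout at all: since $i(\tau)=i(\sigma)$, the hypothesis applied to $\sigma$ and to $\tau$ plus two-out-of-three gives that $X\vert_\tau\to X\vert_\sigma$ is an equivalence, and precomposing with the equivalence $X\vert_{\ell(\tau)}\to X\vert_\tau$ yields the claim for $\sigma(k)=\ell(\tau)$. (The paper sidesteps even this by observing that every connected component of $\dell\Delta^n$ contains the first or the last vertex, so only those two vertices are ever needed.)

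The more serious gaps are in Stage 2. First, Dold's quasifibration criterion for attaching cells requires a deformation of a neighborhood of the lower skeleton \emph{covered by a deformation upstairs} inducing equivalences on fibers; for an arbitrary simplicial map $P$ satisfying only the fiber-equivalence hypothesis, no such covering deformation exists at the point-set level (the fibers over distinct interior points of a cell are genuinely different simplicial sets), so this is not bookkeeping --- it is exactly the step one cannot do this way. The correct tool is the gluing lemma for homotopy cartesian squares over a pushout of cofibrations (\cite[1.7]{Segal}), which is what the paper uses in its induction on skeleta. Second, even granting that $\vert P\vert$ is a quasifibration, your passage to a general $Y'\to Y$ by writing $Y'$ as a homotopy colimit of its simplices and commuting the homotopy pullback past that homotopy colimit is essentially the statement being proved (Puppe/Rezk descent), so as written it is circular. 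The clean reduction is the one the paper makes implicitly: the hypothesis of the lemma is stable under base change along any $Y'\to Y$ (a simplex of $Y'$ maps to a simplex of $Y$ and the restricted fibers agree), so it suffices to show that for each vertex $y$ the fiber $X\vert_y$ maps by an equivalence to the homotopy fiber, which is then established by the skeletal induction.
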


\begin{proof}
This follows readily from \cite[Lemma 1.4.B]{Waldhausen(1985)}, which Waldhausen deduces from Quillen's Theorem B. We find it more transparent to give a direct proof as follows (see also \cite{Rezk}): It is enough to show that for each $y\in Y_0$, the inclusion of the fiber $X\vert_y$ into the homotopy fiber is an equivalence. A colimit argument reduces to the case where $n=\dim(Y)<\infty$, and we prove this by induction on $n$. 

The case $n=0$ is trivial. For the induction step, we let $Y^{(n-1)}\subset Y$ the $(n-1)$-skeleton, and let $X^{(n-1)} := X\times_Y Y^{(n-1)}$. Then the map  $P$ is the map induced on horizontal push-outs in the following diagram:
\begin{equation}  \label{eq:inductive_step_diagram}\xymatrix{
\coprod_i \Delta^n \times_Y X \ar[d]
  & \coprod_i \dell\Delta^n \times_Y X\ar[l] \ar[r] \ar[d]
  & X^{(n-1)} \ar[d]
 \\
 \coprod_i \Delta^n 
  & \coprod_i \dell\Delta^n \ar[l] \ar[r]
  & Y^{(n-1)}
}\end{equation}
By inductive assumption applied to $Y^{(n-1)}$ and $\coprod_i \dell \Delta^n$, the right square is a homotopy pull-back. To analyze the left square, we consider each summand separately and further restrict along a first or last vertex $\Delta^0\subset \Delta^n$:
\[\xymatrix{
 X\times_Y \Delta^0 \ar[r] \ar[d]
 & X\times_Y \dell\Delta^n \ar[r] \ar[d]
 & X\times_Y \Delta^n \ar[d]
 \\
 \Delta^0 \ar[r] 
 & \dell\Delta^n \ar[r]
 & \Delta^n
}\]
In this diagram, the total square is homotopy cartesian by assumption and the left one by induction hypothesis. Since every connected component of $\dell\Delta^n$ is hit by the first or the last vertex, this shows that the right square is also homotopy cartesian. 

Thus, both  squares in \eqref{eq:inductive_step_diagram} are homotopy cartesian. It follows (see \cite[1.7]{Segal}) that also the pull-back square
\[\xymatrix{
 X^{(n-1)} \ar[r] \ar[d] & X \ar[d]\\
 Y^{(n-1)} \ar[r] & Y
}\]
is homotopy cartesian, and we complete the proof of the inductive step by invoking the inductive assumption.
\end{proof}

We close this section with a remark on the relation of this simplicial version of the local-to-global principle to Quillen's Theorem B. I am grateful to Thomas Nikolaus for pointing this out to me.

By Corollary \ref{cor:fiber_transport} we may define, for each 1-simplex $f\colon c\to c'$ in $Y$, a fiber transport
\[f_*\colon X\vert_c \to X\vert_f \xleftarrow\simeq X\vert_{c'}\]
if $p$ is a cocartesian fibration; dually a fiber transport
\[f^*\colon X\vert_{c'} \to X\vert_f \xleftarrow\simeq X\vert_c\]
if $p$ is a cartesian fibration.

\begin{theorem}\label{thm:cocartesian_Quillen_B}
Let $P\colon X\to Y$ be a map of simplicial sets. Suppose that $P$ a cocartesian fibration, such that for each 1-simplex $f$ in $Y$, the fiber transport $f_*$ is a weak equivalence. Then, $P$ is a realization-fibration. 
\end{theorem}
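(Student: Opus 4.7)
My plan is to reduce the statement to Lemma \ref{lem:realization_fibration} by verifying that for every simplex $\sigma\colon \Delta^n\to Y$, both vertex-inclusions $X\vert_{i(\sigma)}\to X\vert_\sigma$ and $X\vert_{\ell(\sigma)}\to X\vert_\sigma$ realize to weak equivalences of spaces. The inclusion from the last vertex is free of charge: since $P$ is cocartesian, Corollary \ref{cor:fiber_transport} directly provides the equivalence. So the entire content of the theorem lies in establishing the first-vertex inclusion, and it is here that the fiber-transport hypothesis must enter.

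The key trick I would use is to mediate between $X\vert_{i(\sigma)}$ and $X\vert_\sigma$ via the ``long edge'' $e:=\sigma\vert_{\Delta^{\{0,n\}}}$, a $1$-simplex of $Y$ from $i(\sigma)$ to $\ell(\sigma)$. The subsimplex inclusions $\{0\},\{n\}\hookrightarrow \Delta^{\{0,n\}}\hookrightarrow \Delta^n$ assemble into a commutative diagram of fibers containing the four maps $X\vert_{i(\sigma)}\to X\vert_e$, $X\vert_{\ell(\sigma)}\to X\vert_e$, $X\vert_e\to X\vert_\sigma$, and $X\vert_{\ell(\sigma)}\to X\vert_\sigma$. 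Corollary \ref{cor:fiber_transport}, applied to the simplices $e$ and $\sigma$, tells me that $X\vert_{\ell(\sigma)}\to X\vert_e$ and $X\vert_{\ell(\sigma)}\to X\vert_\sigma$ are both equivalences on realization; $2$-out-of-$3$ applied to the evident commuting triangle then forces $X\vert_e\to X\vert_\sigma$ to be one as well.

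It remains to argue that $X\vert_{i(\sigma)}\to X\vert_e$ realizes to an equivalence, and this is exactly where the fiber-transport hypothesis does its work. Unwinding the definition, $e_*$ is the zigzag $X\vert_{i(\sigma)}\to X\vert_e\xleftarrow{\simeq} X\vert_{\ell(\sigma)}$ whose backwards arrow is already an equivalence by Corollary \ref{cor:fiber_transport}; thus the assumption that $e_*$ realize to an equivalence is precisely the assumption that $X\vert_{i(\sigma)}\to X\vert_e$ does. Composing along the top of the triangle yields the missing equivalence $X\vert_{i(\sigma)}\to X\vert_\sigma$, and Lemma \ref{lem:realization_fibration} concludes. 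I do not anticipate any serious obstacle: once the long edge is introduced, the argument is a pure $2$-out-of-$3$ chase. The only genuinely nontrivial design choice is to mediate via this long edge, whose role is to bring the fiber-transport hypothesis — formulated only for $1$-simplices — into contact with the higher-dimensional simplices of $Y$ that Lemma \ref{lem:realization_fibration} demands.
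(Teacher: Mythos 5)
Your proposal is correct and takes essentially the same route as the paper's (very terse) proof: Corollary \ref{cor:fiber_transport} supplies the last-vertex inclusions, the fiber-transport hypothesis applied to the long edge of each simplex together with a two-out-of-three argument supplies the first-vertex inclusions, and then Lemma \ref{lem:realization_fibration} concludes exactly as in Corollary \ref{cor:fiber_transport_back_and_forth}. Your write-up merely makes explicit the long-edge mediation that the paper leaves implicit in the phrase ``but this still implies the statement of Corollary \ref{cor:fiber_transport_back_and_forth}.''
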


Of course, the same conclusion then holds if $P$ is a cartesian fibration, such that all fiber transports $f^*$ realize to weak equivalences; for the dual of a realization-fibration is again a realization-fibration.

The proof of this Theorem consists in noting that the inclusions $Y\vert _{f(\sigma)}\to Y\vert_\sigma$ are equivalences (by Corollary \ref{cor:fiber_transport}), but also for each 1-simplex $\sigma$ of $Y$, the inclusion $Y\vert_{\ell(\sigma)} \to Y\vert_\sigma$ is an equivalence (by the  hypothesis on fiber transport). But this still implies the statement of Corollary \ref{cor:fiber_transport_back_and_forth}, so that we may apply Lemma \ref{lem:realization_fibration} just as above. 

Theorem \ref{thm:cocartesian_Quillen_B} implies Quillen's  Theorem B \cite{Quillen} as follows: For an arbitrary functor $F\colon \calc\to \cald$ between ordinary categories, we form a new category $F/\cald$ as the pull-back category
\[\xymatrix{
F/\cald \ar[r] \ar[d] & \cald^{[1]} \ar[d]^{\mathrm{eval}_0} \\
\calc \ar[r]^F &\cald
}\]

Then the composite projection
\[F/\cald\to \cald^{[1]} \xrightarrow{\mathrm{eval}_1} \cald\]
is a cocartesian fibration, with fiber over $d$ the translation category $F/d$; the fiber transport along $f\colon d\to  d'$ is homotopic to the canonical map $F/d\to  F/d'$ induced by postcomposition with $f$. If we assume that these maps all realize to weak equivalences, then  Theorem \ref{thm:cocartesian_Quillen_B} implies that the canonical sequence
\[F/d \to F/\cald \to  \cald\]
is a homotopy fiber sequence. On the other hand the projection
\[F/\cald \to \calc\]
is a  cartesian fibration over $\calc$, with contractible fibers $f(c)/\cald$, so that it is a weak equivalence again by Theorem \ref{thm:cocartesian_Quillen_B}. The projection $F/\cald\to \cald^{[1]}$ defines  natural transformation in the triangle 
\[\xymatrix{
 & F/\cald \ar[ld] \ar[rd]\\
 \calc \ar[rr]^F && \cald
}\]
so that it is homotopy-commutative (after realization). Therefore, for each object $d$ of $\cald$ there is a fibration sequence
\[B(F/d) \to B\calc\xrightarrow{BF} B\cald,\]
which is the conclusion of Quillen's Theorem B.

\subsection{The semi-Segal case}

We deduce the semi-Segal, and hence the topological version of the local-to-global principle, from the simplicial one. The main input is the existence of simplicial structures from \cite{degen}. (Note that the free addition of identity morphisms, while preserving the homotopy type of the classifying space, usually does not preserve (co-)cartesian morphisms.) 

We start by recalling the concept of a Reedy fibration. For a semi-simplicial space $X$, and a semi-simplicial set $A$, we denote
\[X_A:= \lim_{[n] \to A} X_n, \]
where the limit runs over the category of simplices of $A$, in the semi-simplicial sense. 
An equivalent description of $X_A$ is the mapping space from $A$ to $X$, that is, the set of natural transformations $A\to X$, equipped with the subspace topology of  $\prod_n \map(A_n, X_n)$; here $A_n$ is to be considered as a discrete topological space. 
With this notation, the canonical map $X_n\to X_{[n]}$ is an isomorphism, which we will view as an identification.
(Here, and  throughout this section, we denote by $[n]$ the semi-simplicial $n$-simplex, \emph{i.e.}, presheaf represented by $[n]$, and by $\dell[n]\subset [n]$ its boundary.) Also, the construction $X_A$ is covariantly functorial in $X$ and contravariantly functorial in $A$.

\begin{definition}
A map $P\colon X\to Y$ between semi-simplicial spaces is a \emph{Reedy fibration} if for all $n$, the map 
\[X_n = X_{[n]} \to  Y_n\times_{Y_{\dell[n]}} X_{\dell[n]}\]
is  a fibration.  $X$ is \emph{Reedy fibrant} if the map to the terminal object $*$ is a Reedy fibration.
\end{definition}

It is easily seen that the class of Reedy fibrations is closed under composition and pull-back. 
Also, the standard techniques show that if $P$ is a Reedy fibration, then for all inclusions of semi-simplicial sets $A\to B$, the induced map $X_B \to Y_B\times_{Y_A} X_A$ is a fibration.

\begin{lemma}\label{lem:Reedy_fibrant_replacement}
Any map $P\colon X\to Y$ can be factored into a level equivalence $X\to X^f$, followed by a Reedy fibration $X^f\to Y$. 
\end{lemma}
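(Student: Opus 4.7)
The plan is a standard Reedy-style inductive construction: I will build $X^f$ one semi-simplicial degree at a time by applying, at each stage, a functorial factorization of continuous maps into a weak equivalence followed by a Serre fibration. In degree zero I factor $P_0\colon X_0 \to Y_0$ as $X_0 \xrightarrow{\sim} X^f_0 \twoheadrightarrow Y_0$.

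For the inductive step, suppose $X^f$ has been defined as a semi-simplicial space in degrees strictly less than $n$, together with a compatible level equivalence $X\to X^f$ and Reedy fibration $X^f\to Y$ in those degrees. Since $\dell[n]$ only involves simplices of degree less than $n$, the matching object $X^f_{\dell[n]}$ is already well-defined, and the data constructed so far assemble into a canonical map
$$X_n \longrightarrow Y_n \times_{Y_{\dell[n]}} X^f_{\dell[n]}.$$
Applying the functorial factorization to this map yields
$$X_n \xrightarrow{\sim} X^f_n \twoheadrightarrow Y_n \times_{Y_{\dell[n]}} X^f_{\dell[n]},$$
and I then define the face maps $d_i\colon X^f_n \to X^f_{n-1}$ as the composite of the projection $X^f_n \to X^f_{\dell[n]}$ with the canonical projection $X^f_{\dell[n]}\to X^f_{n-1}$ picking off the $i$-th face, and define the map $X^f_n \to Y_n$ via the other projection. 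By construction the first arrow of the factorization is a weak equivalence and the second is precisely the matching-object fibration required in degree $n$, so both conclusions of the lemma hold at the new level.

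The one thing that needs checking beyond the formalism is that the face maps produced this way do assemble into a semi-simplicial space, i.e.\ that they satisfy $d_id_j = d_{j-1}d_i$ for $i<j$. This is exactly what the matching object is designed to encode: $X^f_{\dell[n]}$ is defined as the limit of $X^f$ over the category of simplices of $\dell[n]$, so by its universal property its elements are families of $(n-1)$-simplices of $X^f$ satisfying precisely these relations. Hence the identities in degree $n$ are automatic, and the construction is evidently functorial in $P$ via functoriality of the factorization. I do not anticipate a serious obstacle here; the argument is formal once the matching-object bookkeeping is set up correctly.
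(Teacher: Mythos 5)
Your argument is correct and is essentially the paper's proof: both perform the standard Reedy-style induction, factoring the matching map in each degree into a weak equivalence followed by a fibration. The only difference is organizational --- you build $X^f$ in a single inductive pass using the already-modified matching object $X^f_{\dell[n]}$, whereas the paper replaces one degree at a time (leaving the others untouched) and passes to the colimit; the two constructions yield the same result.
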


\begin{proof}
If $P\colon X\to Y$ satisfies the Reedy condition up to simplicial degree $n-1$, then we define a new semi-simplicial space $X'$ as follows: Choose a factorization 
\[X_n\xrightarrow[\simeq]{i} X'_n \xrightarrow{p} Y_n\times_{Y_{\dell[n]}} X_{\dell[n]}\]
of the Reedy map where $i$ is the a weak equivalence, and $p$ is a fibration. Letting $X'_k:=X_k$ for $k\neq n$ we obtain a new semi-simplicial space $X'$;  $i$ and $p$ induce semi-simplicial maps $I\colon X\to X'$ and $P'\colon X'\to Y$ whose composite is $P$. By construction, $P'$ satisfies the Reedy condition up to degree $n$. 

Now the claim follows by doing this construction iteratively, starting at $n=0$, and taking the colimit. 
\end{proof}

The following result implies that applying Reedy fibrant replacement does not destroy the hypotheses in the local-to-global principle.

\begin{lemma}\label{lem:cartesian_preserved_under_level_equivalences}
Let 
\[\xymatrix{
 \calc\ar[r]^F_\simeq \ar[d]^P & \calc' \ar[d]^{P'}\\
 \cald\ar[r]^G_\simeq & \cald'
}\]
be a commutative diagram of semi-Segal spaces, with horizontal maps level equivalences. Then:
\begin{enumerate}
 \item If one of $P$ and $P'$ is a weakly unital map of weakly unital semi-Segal spaces, then so is the other.
 \item   If one of $P$ and $P'$  is a cartesian (resp.\ cocartesian) fibration, then so is the other.
\end{enumerate}
\end{lemma}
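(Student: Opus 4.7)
The plan is to reduce both parts to the general principle that every structure in play—equivalences, $P$-cartesian $1$-simplices, weak unitality, and (co)cartesian fibrations—is characterized via homotopy pull-back squares and $\pi_0$-surjectivity statements, both of which are preserved and reflected by levelwise weak equivalences.

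The key intermediate claim will be that $F$ and $G$ preserve and reflect $P$-cartesian $1$-simplices, and in particular (applied to the terminal map) equivalences. To establish it, I would first note that for any $\sigma\in\calc_k$, the level equivalence $F$ induces a weak equivalence on homotopy fibers $\calc_n/\sigma \xrightarrow{\simeq} (\calc')_n/F(\sigma)$, and similarly for $G$. The defining square for $f\in\calc_1$ being $P$-cartesian then maps, via four such weak equivalences, to the defining square for $F(f)$ being $P'$-cartesian; since homotopy cartesianness of a square is detected by the weak homotopy type of its four corners, the two conditions are equivalent.

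Combining this with the fact—already observed in the paper—that $\calc_1^{\cart P}\subset\calc_1$ and $\calc_1^\simeq\subset\calc_1$ are unions of path components, together with $F_1\colon\calc_1\xrightarrow{\simeq}(\calc')_1$ being a weak equivalence, I would conclude that $F$ restricts to weak equivalences
\[
\calc_1^{\cart P}\xrightarrow{\simeq}(\calc')_1^{\cart{P'}}\quad\text{and}\quad\calc_1^\simeq\xrightarrow{\simeq}(\calc')_1^\simeq,
\]
and analogously for $G$. Part (i) then follows: weak unitality of $\calc$, namely $\pi_0$-surjectivity of $d_0\colon\calc_1^\simeq\to\calc_0$, transfers across the level equivalences just produced (and similarly for $\cald$ and $\cald'$); the condition that $P$ preserves equivalences is a $\pi_0$-level assertion on the induced map between spaces of equivalences, which likewise transfers. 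For part (ii), the cartesian fibration condition is $\pi_0$-surjectivity of $(P,d_0)\colon\calc_1^{\cart P}\to\cald_1\times^h_{\cald_0}\calc_0$; the source is equivalent to $(\calc')_1^{\cart{P'}}$ by the intermediate claim, and the target maps by a weak equivalence to $(\cald')_1\times^h_{(\cald')_0}(\calc')_0$ because homotopy pull-backs are homotopy-invariant. Thus $\pi_0$-surjectivity transfers in both directions, and the cocartesian version is obtained by passing to opposites.

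The main technical point, though essentially routine, will be setting up the comparison of the two defining squares carefully, which comes down to naturality of the construction $\calc_n/\sigma$ in both variables together with the standard fact that homotopy cartesianness of a square depends only on the weak homotopy type of its four corners. Once this comparison is in place, the remainder of the argument is formal $\pi_0$-level bookkeeping, and no further hypotheses on $F$ or $G$ beyond being level equivalences are needed.
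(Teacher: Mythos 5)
Your proposal is correct and follows essentially the same route as the paper: the paper's proof of (i) is exactly your $\pi_0$-level diagram argument after observing that $\calc_1^\simeq$ consists of path components and that level equivalences preserve and reflect equivalences, and it leaves (ii) to the reader as "a very similar argument," which you carry out in the intended way via the comparison of the defining homotopy pull-back squares and the $\pi_0$-surjectivity condition. Your write-up in fact supplies slightly more justification for the key intermediate claim (that the defining squares for $f$ and $F(f)$ are related by weak equivalences on all four corners) than the paper does.
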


\begin{proof}
A 1-simplex in $\calc$ is an equivalence if and only if its image under $F$ is an equivalence in $\calc'$. Since the subspace of equivalences $\calc_1^\simeq\subset \calc_1$ is a collection of path components, it follows that $F$ induces an equivalence $\calc_1^\simeq \to (\calc')_1^\simeq$. Therefore, under the hypotheses of the Lemma, there is a commutative diagram
\[\xymatrix{
  \pi_0\calc_1^\simeq \ar[d]^{d_0} \ar[r]^F_\cong &  \pi_0 (\calc')_1^\simeq \ar[d]^{d_0} \\
  \pi_0\calc_0 \ar[r]^F_\cong &  \pi_0\calc'_0
}\]
so $\calc$ is weakly unital if and only if $\calc'$ is. Similarly, from the commutative  diagram
\[\xymatrix{
 \pi_0 \calc_1^\simeq \ar[d]^P \ar[r]^F_\cong & \pi_0 (\calc')_1^\simeq \ar[d]^{P'}\\
 \pi_0\cald_1 \ar[r]^G_\cong & \pi_0 (\cald')_1
}\]
we deduce that $P$ is weakly unital if and only if $P'$ is. This proves (i). Part (ii) is proven by a very similar argument which we leave to the reader.
\end{proof}

For later use, we record the following consequence:

\begin{corollary}\label{cor:cocartesian_fibration_stable_under_pullback}
(Co-)cartesian fibrations are stable under level-wise homotopy pull-back.
\end{corollary}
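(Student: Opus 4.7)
My plan is to reduce to a strict pull-back via Reedy fibrant replacement, and then verify the two conditions defining a cartesian fibration by a direct base-change argument. I treat the cartesian case; the cocartesian assertion follows by passing to opposite semi-Segal spaces.

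Given a cartesian fibration $P\colon \calc\to\cald$ and an arbitrary map $F\colon\cald'\to\cald$, I first apply Lemma \ref{lem:Reedy_fibrant_replacement} to factor $P$ as a level equivalence $\calc\xrightarrow{\simeq}\calc^f$ followed by a Reedy fibration $P^f\colon\calc^f\to\cald$. Lemma \ref{lem:cartesian_preserved_under_level_equivalences} guarantees that $P^f$ is still a cartesian fibration. Since Reedy fibrations are level fibrations, the strict pull-back $\cale := \cald'\times_\cald \calc^f$ models the level-wise homotopy pull-back $\cald'\times^h_\cald\calc$ level-equivalently, so a second application of Lemma \ref{lem:cartesian_preserved_under_level_equivalences} reduces the problem to showing that the projection $P'\colon \cale\to \cald'$ is a cartesian fibration.

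I claim the $P'$-cartesian morphisms include all pairs $f = (f', \tilde f)\in\cale_1$ whose second coordinate $\tilde f\in\calc^f_1$ is $P^f$-cartesian. Since each $\cale_n = \cald'_n\times_{\cald_n}\calc^f_n$ models the level-wise homotopy pull-back (using the fibration $\calc^f_n\to\cald_n$), a standard homotopy-fiber computation gives, for every $\sigma = (\sigma',\tilde\sigma)\in\cale_j$, a natural identification
\[\cale_k/\sigma \;\simeq\; \cald'_k/\sigma' \times^h_{\cald_k/\tau} \calc^f_k/\tilde\sigma,\]
where $\tau := F(\sigma') = P^f(\tilde\sigma)$. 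Under this identification, the square whose homotopy cartesianness defines $P'$-cartesianness of $f$ is precisely the base change, along the vertical maps $\cald'_\ast/P'(-)\to\cald_\ast/P^f(-)$, of the square witnessing $P^f$-cartesianness of $\tilde f$; since base change preserves homotopy cartesian squares, the former is homotopy cartesian, and $f$ is $P'$-cartesian.

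Finally, for the $\pi_0$-surjectivity condition: given $g\colon d\to d'$ in $\cald'_1$ and a lift $c' = (d',\tilde c')$ of $d'$ to $\cale_0$, cartesianness of $P^f$ supplies a $P^f$-cartesian lift $\tilde f\in\calc^f_1$ of $F(g)$ ending at $\tilde c'$; the pair $f := (g,\tilde f)\in\cale_1$ then lifts $g$ to a morphism ending at $c'$, and is $P'$-cartesian by the previous step. I expect the one genuine subtlety to be the identification of $\cale_k/\sigma$ as the indicated homotopy pull-back of homotopy fibers, which depends crucially on the Reedy fibrancy arranged in the reduction step.
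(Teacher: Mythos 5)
Your proposal is correct and follows essentially the same route as the paper, which reduces via Reedy fibrant replacement and Lemma \ref{lem:cartesian_preserved_under_level_equivalences} to the case of a Reedy fibration and a strict pull-back, and then declares the claim to follow directly from the definitions. You have simply spelled out that last step (the identification of $\cale_k/\sigma$ as a homotopy pull-back of homotopy fibers and the base-change argument), and your account of it is accurate.
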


In fact, this follows directly from the  definitions in case the the map under question is also a Reedy fibration and the pull-back is strict; so the general case follows from Reedy fibrant replacement and Lemma \ref{lem:cartesian_preserved_under_level_equivalences}.

Now, the following allows to translate the weakly unital semi-Segal setting into the setting of simplicial sets. We denote by $(-)^\delta$ the functor from semi-simplicial spaces to semi-simplicial sets that forgets the topology. We also note that the standard notions of inner fibrations, (co-)cartesian edges, and (co-)cartesian fibrations between simplicial sets make perfectly sense for maps of semi-simplicial sets, as they are given by horn filling conditions. 

\begin{lemma}\label{lem:translation_Segal_quasicat}
Let $P\colon \calc\to \cald$ be Reedy fibration of Reedy fibrant semi-Segal spaces. Then,
\begin{enumerate}
 \item $P^\delta$ is an inner fibration between inner fibrant semi-simplicial sets.
 \item If $f\in \calc_1$ is $P$-cartesian (in the semi-Segal sense), then it is also $P^\delta$-cartesian (in the sense of semi-simplicial sets).
 \item If $P$ is a (co-)cartesian fibration then so  is $P^\delta$.
\end{enumerate}
If, furthermore, $\calc$, $\cald$, and $P$ are weakly unital, then 
\begin{enumerate}\setcounter{enumi}{3}
 \item $\calc^\delta$ and $\cald^\delta$ admit simplicial structures such that $P^\delta$ is simplicial. In more detail, given any simplicial structure on $\cald^\delta$, there exists a simplicial structure on $\calc^\delta$ such that $P^\delta$ is simplicial. 
 \item The canonical maps $\calc^\delta\to \calc$ and $\cald^\delta\to \cald$ induce a weak equivalence on classifying spaces.
\end{enumerate}
\end{lemma}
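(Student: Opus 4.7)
The plan is to handle the five parts in order, with each argument boiling down to a careful combination of the Segal condition and Reedy fibrancy.

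For (i), I would first observe that by Reedy fibrancy of $P$, the matching map
\[\calc_n \to \cald_n \times_{\cald_{\Lambda^n_i}} \calc_{\Lambda^n_i}\]
is a Serre fibration for every semi-simplicial inclusion $\Lambda^n_i \hookrightarrow [n]$. For inner $i$, the Segal condition applied to both $\calc$ and $\cald$ implies that the maps $\calc_n \to \calc_{\Lambda^n_i}$ and $\cald_n \to \cald_{\Lambda^n_i}$ are weak equivalences, and a two-out-of-three argument shows that the matching map above is a weak equivalence as well. Hence it is a trivial fibration and in particular surjective on points, which is exactly the inner horn filling condition for $P^\delta$; the special case $\cald = \ast$ yields inner fibrancy of $\calc^\delta$ (and similarly for $\cald^\delta$).

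For (ii), I would reduce to the case $n = 2$ by using inner horn filling from (i) to handle higher-dimensional outer horns inductively. The $n = 2$ lifting problem translates into finding a preimage of a given point under the map
\[\calc_2/f \to \cald_2/P(f) \times_{\cald_1/P(d)} \calc_1/d,\]
and the $P$-cartesian homotopy-pullback condition, combined with Reedy fibrancy (which promotes the homotopy pullback to a strict one), makes this map a trivial fibration, hence surjective. For (iii), the semi-Segal cartesian fibration hypothesis gives $\pi_0$-surjectivity of $(P, d_0)\colon \calc_1^{\cart P} \to \cald_1 \times_{\cald_0} \calc_0$; this map is a Serre fibration onto its strict pullback (which coincides with the homotopy pullback by Reedy fibrancy), and since $\calc_1^{\cart P}$ is a union of path components, $\pi_0$-surjectivity upgrades to genuine surjectivity, producing the cartesian lifts required by $P^\delta$.

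For (iv) and (v), the central input is \cite{degen}, which produces simplicial structures on weakly unital semi-Segal spaces. I would apply that construction first to $\cald$, and then relatively to $\calc$: for each $c \in \calc_0$ choose a weak unit $u_c \in \calc_1$ with source and target $c$ lifting the previously chosen $u_{P(c)} \in \cald_1$, which is possible because the subspace of weak units above $u_{P(c)}$ is a non-empty union of path components of a fibre of a Reedy fibration. Higher degeneracies and coherences lift inductively by the same mechanism. Assertion (v), that $\calc^\delta \to \calc$ and $\cald^\delta \to \cald$ realize to weak equivalences, is a built-in feature of the construction of \cite{degen}: the simplicial structure is designed precisely so that passing to the underlying semi-simplicial space and adding degenerate simplices does not alter the homotopy type of the realization. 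The main obstacle is the relative form of (iv): while the absolute construction of simplicial structures is provided by \cite{degen}, executing it \emph{simultaneously} on $\calc$ and $\cald$ in a way strictly compatible with $P$ requires careful obstruction-theoretic bookkeeping at each simplicial degree, where the full strength of Reedy fibrancy and weak unitality of $P$ enters.
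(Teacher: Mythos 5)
Parts (i)--(iii) of your argument match the paper's: the paper simply cites [degen, 3.4 and 3.6] for the fact that the relative matching maps over inner horns (resp.\ the maps detecting cartesian edges) are acyclic fibrations, and your unpacking of why (Segal condition plus Reedy fibrancy, then $\pi_0$-surjectivity upgrading to surjectivity on a union of path components) is the intended reasoning. The problems are in (iv) and (v).

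For (iv), the input required by [degen, Theorems 1.2 and 2.1] is not merely an endo-equivalence $u_c\colon c\to c$ for each object, but a \emph{homotopy idempotent self-equivalence}: a 2-simplex all of whose vertices are $c$ and all of whose faces agree and are equivalences (and, in the relative case, a 2-simplex whose faces are $P$-cartesian and $P$-cocartesian and which maps to the \emph{degenerate} 2-simplex of the already-constructed simplicial structure on $\cald^\delta$). Weak unitality only gives an equivalence \emph{ending} at $c$, with a possibly different source; producing the idempotent 2-simplex from it is the actual content of the proof. The paper does this by noting that an equivalence $f\colon d\to c$ is cocartesian over the point, using the (quasicategorical) cocartesian lifting twice --- once to produce a 2-simplex $\sigma$ with $d_1\sigma=d_2\sigma=f$, whose third face $u$ is an equivalence by 2-out-of-3, and once more to fill a 3-horn whose first three faces are $\sigma$, whose last face is the desired $\tau$. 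Your proposal ("choose a weak unit with source and target $c$ lifting $u_{P(c)}$, higher coherences lift inductively") skips exactly this construction, and you yourself flag the relative compatibility with $P$ as an unresolved "main obstacle"; the paper resolves it by running the same two-step cocartesian argument relatively, using that equivalences in $\calc$ are $P$-(co)cartesian when $P$ is weakly unital.

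For (v), you have misread the statement. It does not concern the passage from a semi-simplicial set to its simplicial completion (which is indeed handled inside [degen]); it asserts that the inclusion of the \emph{discretized} semi-simplicial set $\cald^\delta$ into the semi-simplicial \emph{space} $\cald$ realizes to a weak equivalence, i.e.\ that forgetting the topology on each level does not change the homotopy type of the realization. This is false for general semi-simplicial spaces and requires an argument: the paper shows, using Reedy fibrancy, that each evaluation map $\map(\Delta^k,\cald)\to\map(\Delta^0,\cald)$ has the right lifting property against $\dell[n]\to[n]$, then invokes Lemma \ref{lem:cartesian_preserved_under_level_equivalences} and part (iv) to endow its discretization with a simplicial structure and conclude it realizes to an equivalence; hence the level-wise singular construction $S_*\cald$ is homotopically constant and $\vert\cald^\delta\vert=\vert S_0\cald\vert\to\vert S_*\cald\vert\simeq\vert\cald\vert$ is an equivalence. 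None of this is present in your proposal, so (v) is a genuine gap.
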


\begin{proof}
It follows from  \cite[3.4]{degen} that the map $\calc_n \to \cald_n\times_{\cald_{\Lambda^n_i} } \calc_{\Lambda^n_i}$ is an acyclic fibration of spaces for each $0<i<n$; therefore it is surjective. Hence $P$, and equally $P^\delta$, have the right lifting property against all inner horn inclusions, so $P^\delta$ is an inner fibration. Applying this to the projection $\cald\to *$ shows that $\cald^\delta$ is inner fibrant. This shows (i). Part (ii) follows by the  same reasoning from \cite[3.6]{degen}. To see  (iii), we note that 
\[(P,d_0)\colon \calc_1^{\cart P}\to \calc_0\times_{\cald_0} \cald_1\]
is a $\pi_0$-surjective fibration and therefore surjective, and remains surjective after applying $(-)^\delta$, so that the claim follows from (i) and (ii).

Let us prove (iv). We first show that every object $c$ of $\cald$ admits a homotopy idempotent self-equivalence, in other words, a two-simplex $\tau$, all whose vertices are $c$, and all whose faces agree and are equivalences. Indeed, since
\[d_0\colon \cald_1^\simeq \to \cald_0\]
is a $\pi_0$-surjective fibration, it is surjective, and there exists an equivalence $f\colon d\to c$ with target $c$. Then $f$ is cocartesian for the projection $\cald\to *$, and by part (ii) also cocartesian for $\cald^\delta\to *$, in the quasicategorical sense. We use this cocartesian property of $f$ to find a 2-simplex $\sigma\in \cald_2$,
\[\xymatrix{
 d \ar[rr]^f \ar[rd]_f && c \ar@{.>}[ld]^u\\
  & c
}\]
such that $d_1\sigma=d_2\sigma=f$. Now equivalences in semi-Segal  spaces are easily seen to satisfy the 2-out-of-3 property, so $u:=d_0 f$ is still an equivalence (in $\cald$, therefore in $\cald^\delta$ again by part (ii)). Applying the cocartesian property once more, we obtain a 3-simplex
\[\xymatrix{
 d \ar[rd]^(.7)f \ar[rrrd]^f \ar@/_5ex/[rrdd]_f\\
 & c \ar[rr]^u \ar[rd]^u && c \ar[ld]^u\\
 && c
}
\]
whose first three faces are $\sigma$; its last boundary is then a simplex $\tau$ as required. 

This shows that the semi-simplicial set $\cald^\delta$ fulfills the hypotheses of \cite[Theorem 1.2]{degen} and we conclude that $\cald^\delta$ admits a simplicial structure.

Now, since $P$ is weakly unital, any equivalence of $\calc$ is both $P$-cartesian and $P$-cocartesian. Applying the same argument as above  in the relative situation, we then see that any object $c$ of $\calc$ admits a 2-simplex, all whose vertices are $c$, and all whose faces agree and are $P$-cartesian and $P$-cocartesian, and which maps under $P$ to the degenerate simplex on $c$, in the newly constructed simplicial structure. Then it follows from \cite[Theorem 2.1]{degen} that $\calc^\delta$ admits a simplicial structure such that $P^\delta$ is simplicial. 

It remains to prove (v). By definition, $\cald$ being Reedy fibrant means that each diagram $(0\leq i\leq k, n\geq 0)$ of semi-simplicial spaces
\[\xymatrix{
\Lambda^k_i \times [n] \cup_{\Lambda^k_k\times \dell[n]} \Delta^k\times \dell [n] \ar[rr] \ar[d] && \cald\\
\Delta^k \times [n] \ar@{.>}[rru]
}\]
has  a dotted extension. This again means  that the restriction map (of  semi-simplicial  spaces)
\[\map(\Delta^k, \cald) \to \map(\Lambda^k_i, \cald)\]
has the right lifting property against all inclusions $\dell[n]\to [n]$. Since each inclusion $\Delta^k\to \Delta^{k+1}$ may be obtained by filling in horns, it follows that evaluation at each vertex
\begin{equation}\label{eq:singular_functor_for_semi_Segal_space}
\map(\Delta^k, \cald)\to \map(\Delta^0, \cald) 
\end{equation}
also has the right lifting property against the same set of inclusions. Now it follows from Lemma \ref{lem:cartesian_preserved_under_level_equivalences} that \eqref{eq:singular_functor_for_semi_Segal_space} is a weakly unital map of weakly unital semi-Segal spaces; by part (iv) it follows that the induced map
\[\map(\Delta^k, \cald)^\delta\to \map(\Delta^0, \cald)^\delta\]
may be given a simplicial structure. This simplicial map then has the right lifting property against all simplicial inclusions $\dell\Delta^n_\bullet\to \Delta^n$; therefore realizes to a weak equivalence. 

In other words, the degree-wise singular construction $S_\ast \cald$ is homotopically constant in the $*$-direction, after realizing the semi-simplicial direction of $\cald$. It follows that 
\[\vert \cald^\delta\vert = \vert S_0 \cald\vert \to \vert S_\ast \cald\vert \simeq \vert \cald\vert\]
is an equivalence. This proves the claim for $\cald$; the same reasoning applies to $\calc$. 
\end{proof}

\begin{lemma}\label{lem:Segal_pullback}
In the situation of Theorem \ref{thm:main_Segal}, the (degree-wise) pull-back $\cald'\times_\cald \calc$ is a weakly unital semi-Segal space. 
\end{lemma}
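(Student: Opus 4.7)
The plan is to verify the Segal condition and weak unitality of $\cald'\times_\cald \calc$ separately.

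For the Segal condition, since $P$ is a level fibration, each strict level-$n$ pull-back $(\cald'\times_\cald\calc)_n = \cald'_n\times_{\cald_n}\calc_n$ coincides with the homotopy pull-back $\cald'_n\times^h_{\cald_n}\calc_n$. Rearranging iterated homotopy pull-backs by the usual interchange, the Segal map for $\cald'\times_\cald\calc$ is identified with the homotopy pull-back, over the Segal map of $\cald$, of the Segal maps for $\cald'$ and $\calc$; since each of the latter is a weak equivalence by the semi-Segal hypothesis, so is the result.

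For weak unitality, fix $(d',c)\in (\cald'\times_\cald\calc)_0$, so $F(d')=P(c)$. Using weak unitality of $\cald'$, choose an equivalence $e'\colon x'\to d'$ in $\cald'$. Since $F$ is weakly unital, $e:=F(e')$ is an equivalence in $\cald$ ending at $P(c)$. The cartesian fibration hypothesis on $P$ then produces a $P$-cartesian 1-simplex $\tilde e\in \calc_1^{\cart P}$ with $(P(\tilde e),d_0(\tilde e))$ path-equivalent to $(e,c)$ in $\cald_1\times^h_{\cald_0}\calc_0$; lifting paths through the level fibration $P$ on levels $0$ and $1$, we may adjust $\tilde e$ so that $P(\tilde e)=e$ and $d_0(\tilde e)=c$ on the nose. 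Then $(e',\tilde e)$ is a $1$-simplex of the strict pull-back ending at $(d',c)$.

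The main remaining step---and the key obstacle---is to show that $\tilde e$ is itself an equivalence in $\calc$, for then $(e',\tilde e)$ is an equivalence in the pull-back (equivalences in a level-wise pull-back along a level fibration being detected as pairs of compatible equivalences via the homotopy pull-back squares defining the equivalence condition). For this we translate to the simplicial setting: after a Reedy fibrant replacement (provided by Lemma~\ref{lem:Reedy_fibrant_replacement}, with hypotheses preserved by Lemma~\ref{lem:cartesian_preserved_under_level_equivalences}), Lemma~\ref{lem:translation_Segal_quasicat} converts $P$ into an inner fibration of quasicategories and $\tilde e$ into a $P^\delta$-cartesian edge whose image is an equivalence. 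The claim then reduces to the standard fact---dual to \cite[Proposition~2.4.1.5]{Lurie_HTT}---that a cartesian edge of an inner fibration of quasicategories whose image is an equivalence is itself an equivalence.
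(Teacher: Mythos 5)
Your argument for the Segal condition is the same as the paper's (interchange of homotopy limits, using that the level fibration hypothesis makes the strict level-wise pull-backs homotopy pull-backs). For weak unitality, however, you take a genuinely different route. The paper never uses the (co)cartesian fibration hypothesis here: it first performs the Reedy fibrant replacement, then chooses weak units $\varphi$ of $d'$ and $\psi$ of $c$ \emph{independently} (using weak unitality of $\cald'$ and of $\calc$), observes that $F(\varphi)$ and $P(\psi)$ are both weak units of the same object of $\cald$ and hence homotopic, and lifts that homotopy through the Reedy fibration $P$ to correct $\psi$ so that $(\varphi,\psi)$ is a strict $1$-simplex of the pull-back. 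You instead lift $F(e')$ to a $P$-cartesian edge and invoke (the dual of) \cite[2.4.1.5]{Lurie_HTT} to see that a cartesian edge over an equivalence is an equivalence. This works, but it is heavier: it routes through the quasicategorical translation and, crucially, needs the \emph{converse} of Lemma \ref{lem:translation_Segal_quasicat}(ii) --- that a quasicategorical equivalence in $\calc^\delta$ is an equivalence in the semi-Segal sense in $\calc$ --- which the paper does not state and which you should justify (e.g.\ via the homotopy inverse acting on mapping spaces). Two further points to tighten: (a) your strictification of $\tilde e$ (arranging $P(\tilde e)=e$ and $d_0(\tilde e)=c$ on the nose) cannot be done with the level fibration hypothesis alone, since $d_0\colon\calc_1\to\calc_0$ need not be a fibration; you should perform the Reedy fibrant replacement \emph{first}, after which $(P,d_0)$ restricted to $\calc_1^{\cart P}$ is a fibration onto a strict-equals-homotopy pull-back and $\pi_0$-surjectivity upgrades to surjectivity. (b) After replacement the level-wise pull-back changes, so you must note (as the paper does) that it is level equivalent to the original one and invoke Lemma \ref{lem:cartesian_preserved_under_level_equivalences} to transfer weak unitality back. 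The paper's argument is more elementary and self-contained; yours has the mild advantage of exhibiting the weak unit of the pull-back as a cartesian lift, but at the cost of extra translation machinery.
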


\begin{proof}
The target of the Segal map of the pull-back is
\[(\cald'_1 \times_{\cald_1} \calc_1) \times^h_{\cald'_0\times_{\cald_0} \calc_0} \dots \times^h_{\cald'_0\times_{\cald_0} \calc_0} (\cald'_1 \times_{\cald_1} \calc_1).\]
Switching homotopy limits, this target becomes equivalent to
\[(\cald'_1\times^h_{\cald'_0}\dots \times^h_{\cald'_0} \cald'_1) \times^h_{\cald_1\times^h_{\cald_0}\dots \times^h_{\cald_0} \cald_1} (\calc_1\times^h_{\calc_0}\dots \times^h_{\calc_0} \calc_1)\]
and the Segal condition for the pull-back follows from the Segal conditions for the three terms involved. 

To verify the weak unitality, we first apply Lemma \ref{lem:Reedy_fibrant_replacement} multiple times as to replace the diagram $\cald'\to \cald\leftarrow \calc$ by a level equivalent diagram where all terms are Reedy fibrant and all maps Reedy fibrations. These are still weakly unital maps of weakly unital semi-Segal spaces by Lemma \ref{lem:cartesian_preserved_under_level_equivalences}. Furthermore the pull-back of the new diagram is level equivalent to the pull-back of the old diagram; so that it will be enough to show that the pull-back of the new diagram is weakly unital, again in view of Lemma \ref{lem:cartesian_preserved_under_level_equivalences}. 

It results from this discussion that we may assume that all terms are Reedy fibrant and all maps Reedy fibrations. Let $(d', c)$ be a zero-simplex of the pull-back. As shown in the proof of Lemma \ref{lem:translation_Segal_quasicat}, (iv), there exist weak units (\emph{i.e.,} homotopy idempotent self-equivalences) $\varphi$ of $d'$ and $\psi$ of $c$. Then the images $F(\varphi)$ and $P(\psi)$ are weak units of $F(d')=P(c)$. But any two weak units of the same object are homotopic through some homotopy $H$. Changing $\psi$ by a homotopy obtained by lifting $H$ through the Reedy fibration $P$, we may assume that $F(\varphi)=P(\psi)$ so that $(\varphi, \psi)$ defines a 1-simplex in $\cald'\times_\cald \calc$, which is easily seen to be an equivalence. Therefore any zero-simplex of $\cald'\times_\cald \calc$ is the target of an equivalence.
\end{proof}

\begin{proof}[Proof of Theorem \ref{thm:main_Segal}]
By Lemma \ref{lem:cartesian_preserved_under_level_equivalences}, the hypotheses of the Theorem are invariant under level-equivalences. Therefore, we may assume that all semi-simplicial spaces are Reedy fibrant, and all maps are Reedy fibrations (else  applying Lemma \ref{lem:Reedy_fibrant_replacement} multiple times). 

Now, applying $(-)^\delta$ level-wise at each entry, by Lemma \ref{lem:translation_Segal_quasicat} we obtain diagram of semi-simplicial sets and inner fibrations,
\[\cald^\delta \xrightarrow{F^\delta} \calc^\delta \xleftarrow{P^\delta} \calc^\delta\]
where the right map is a cartesian and cocartesian fibration, and where all terms can be given compatible simplicial structures so that all maps become simplicial. By Theorem \ref{thm:main_quasicat}, the  geometric realization of the pull-back diagram
\[\xymatrix{
(\cald')^\delta \times_{\cald^\delta} \calc^\delta \ar[r] \ar[d] & \calc^\delta\ar[d]^{P^\delta}\\
\cald^\delta \ar[r]^{F^\delta} & \calc^\delta
}\]
is a homotopy pull-back. (Strictly speaking, this is the geometric realization of simplicial sets, but this one is well-known to be equivalence to  the geometric realization of underlying semi-simplicial sets.) The  identity function induces a natural transformation, in the category of semi-simplicial spaces, from the above diagram to the diagram \eqref{eq:main_diagram}. It is an equivalence after realization in each  term, by Lemma \ref{lem:translation_Segal_quasicat}, (v), where we note that the term $\cald'\times_\cald \calc$ fulfills the assumptions of this Lemma by Lemma \ref{lem:Segal_pullback}. Hence, \eqref{eq:main_diagram} is also a homotopy pull-back.
\end{proof}

\section{Cobordism categories of \texorpdfstring{$\cor{k}$}{<k>}-manifolds} \label{sec:genauer}

In this section we use the concept of neat submanifolds, see Appendix \ref{sec:isotopy_extension}. Loosely, we define the $d$-dimensional cobordism category of $\cor{k}$-manifolds as follows: An object is a compact  $M\subset \rc{\infty+d-1}{k}$ which is a smooth, neat submanifold of dimension $(d-1)$. A morphism is a pair $(W,a)$ of $a>0$ and a compact $W\subset [0,a]\times \rc{\infty+d-1}{k}$ which is a smooth neat submanifold of dimension $d$. Such a $(W,a)$ is viewed as a morphism from $\dell_{-}W:= W\cap \{0\}\times \rc{\infty+d-1}{k}$ to $\dell_{+} W:=  W\cap \{a\}\times \rc{\infty+d-1}{k}$.  Composition is given by stacking and adding the real parameters.

Instead of giving this (non-unital) category a topology, we view this as the value at $[0]$ of a simplicial category $\cob_{d, \cor{k},\bullet}$. In simplicial level $n$, an object is a fiber bundle $E\to \Delta^n$ of smooth $\cor k$-manifolds, which is fiberwise $\delta$-neatly embedded into $\Delta^n \times \rc{\infty+d-1}k$, for some $\delta>0$. A morphism is a continuous map $\underline a\colon \Delta^n\to (0,\infty)$ and a fiber bundle over $\Delta^n$ of compact smooth $\cor{k+2}$-manifolds, fiberwise $\delta$-neatly embedded into $\Delta^n\times [0,\underline a]\times \rc{\infty+d-1}{k}$ for some $\delta>0$. Here we use the notation 
\[[0,\underline a]\times X:= \{(t,x)\in \IR\times X\;\vert\; 0\leq t\leq \underline a(\pi(x)) \} \]
for a space $X$ with a map $\pi\colon X\to \Delta^n$.

\emph{Details.} By ``fiber bundle $E\to B$ of smooth $\cor{k}$-manifolds'' we mean a fiber bundle with a reduction of the structure group to $\Aut(M)$, the group of neat and allowable automorphisms of the typical fiber, with $C^\infty$-topology. A fiberwise embedding from one bundle into another is fiberwise smooth and neat if, in local charts, it is given by a continuous map to $\Emb(M,N)$, the  space of neat and allowable embeddings, with $C^\infty$-topology. (Compare Appendix \ref{sec:isotopy_extension}). Of course, the reduction of the structure group for the bundle $\Delta^n\times[0,\underline a]\times  \rc{\infty+d-1}{k}$ is given by the unique chart $\Delta^n\times [0,1]\times \rc{\infty+d-1}{k}$ which rescales fiberwise by $\underline a$.

The simplicial category $\cob_{d, \bullet}:=\cob_{d,\cor{0},\bullet}$ is a simplicial version of the usual $d$-dimensional cobordism category, and $\cob_{d,\dell,\bullet}:=\cob_{d, \cor{1},\bullet}$ is a simplicial version of the $d$-dimensional cobordism category of manifolds with boundaries. We denote by $\cob_{d,\cor{k}}$ the topological category obtained from $\cob_{d,\cor{k},\bullet}$ by geometric realization of objects and morphisms and by $B\cob_{d,\cor{k}}$  its classifying space.

There is an obvious commutative square of functors
\[\xymatrix{
 {\cob_{d,\cor{k}}} \ar[r]^{i_{1}} \ar[d] & \cob_{d,\cor{k+1}} \ar[d]^{\dell_{1}} \\
 \cob^\emptyset \ar[r] & \cob_{d-1, \cor{k}}
}\]
where the upper horizontal functor is the inclusion 
\[i_{1}\colon (M;M_1,\dots, M_k) \mapsto (M; \emptyset, M_1,\dots, M_k)\]
(using a standard inclusion $\IR\to [0,\infty)$ in the relevant coordinate) on objects and morphisms, and the functor $\dell_{1}$ is given by taking the first boundary on objects and morphisms. The lower left entry denotes the subcategory of $\cob_{d-1, \cor{k}}$ on the empty set and empty morphisms. Actually this is isomorphic to the topological semi-group $\vert S_\bullet (0,\infty)\vert $ (the geometric realization on the singular construction on the space $(0,\infty)$, semi-group structure by addition). The lower horizontal functor is the inclusion and the left vertical functor sends $(W,\underline a)$ to $\underline a$.

\begin{theorem}\label{thm:genauer}
The above square becomes a homotopy pull-back square after geometric realization. Since $B\cob^\emptyset$ is contractible, we obtain a fibration sequence
\[B\cob_{d,\cor{k}} \xrightarrow{Bi_{1}} B\cob_{d,\cor{k+1}} \xrightarrow{B\dell_{1}} B\cob_{d-1,\cor{k}}.\]
\end{theorem}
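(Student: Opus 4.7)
The plan is to deduce Theorem \ref{thm:genauer} from Theorem \ref{thm:main_categorical}, applied to the diagram $\cob^\emptyset\xrightarrow{F}\cob_{d-1,\cor{k}}\xleftarrow{\dell_1}\cob_{d,\cor{k+1}}$. First I would identify the pullback of topological categories on the nose with $\cob_{d,\cor{k}}$: an object (respectively, morphism) of $\cob_{d,\cor{k+1}}$ whose first face is empty is the same datum as an object (respectively, morphism) of $\cob_{d,\cor{k}}$, with the parameter $\underline a$ on morphisms agreeing with the one recorded in $\cob^\emptyset$. Combined with the contractibility of $B\cob^\emptyset$ asserted in the statement, this reduces the theorem to checking the hypotheses of Theorem \ref{thm:main_categorical}.

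Several of these hypotheses are routine. All four categories are locally fibrant, the source-target map on morphisms being a Serre fibration via the parametrized isotopy extension theorem of Appendix \ref{sec:isotopy_extension} applied to the ambient bundles of neatly embedded manifolds. Each category is weakly unital, with a short cylinder $(M\times[0,\eps],\eps)$ serving as a weak unit over any object $M$; both $F$ and $\dell_1$ preserve cylinders and are therefore weakly unital. Finally, $\dell_1$ is a level fibration by the same isotopy-extension mechanism applied to the map of embedding bundles that takes the first face.

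The core geometric content is the construction of the cartesian lift of a morphism $V\colon N\to P$ in $\cob_{d-1,\cor{k}}$ ending at an object $M\in \cob_{d,\cor{k+1}}$ with $\dell_1M=P$. After choosing a collar $P\times[0,\delta)\hookrightarrow M$ of the first face, I would build $W$ by parametrizing its points in two auxiliary coordinates $(t_1,t_2)\in[0,a]\times[0,\infty)$, where $t_1$ is the time variable and $t_2$ the outward collar direction from the first face. The slice at $(t_1,t_2)$ is taken to be the slice of $V$ at $t_1$ when $t_2=0$, the slice of $M$ at $t_2$ when $t_1=a$, and is interpolated smoothly in between using the collar. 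The resulting $W$ is neatly embedded in $[0,a]\times \rc{\infty+d-1}{k+1}$ and satisfies $\dell_+W=M$ and $\dell_1 W=V$, while its source $M'=\dell_-W$ has first face $N$. The cocartesian lift is obtained by the dual construction at the source end.

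The main obstacle is verifying that $W$ is genuinely $\dell_1$-cartesian, \emph{i.e.}, that for every $t\in\ob\cob_{d,\cor{k+1}}$ the square
\[\xymatrix{
\cob_{d,\cor{k+1}}(t,M')\ar[r]^{W\circ -}\ar[d]_{\dell_1}&\cob_{d,\cor{k+1}}(t,M)\ar[d]^{\dell_1}\\
\cob_{d-1,\cor{k}}(\dell_1 t,N)\ar[r]^{V\circ -}&\cob_{d-1,\cor{k}}(\dell_1 t,P)
}\]
is homotopy cartesian. Concretely, one must show that a cobordism $U\colon t\to M$ is reconstructed, up to a contractible space of choices, from a factorization $\dell_1U= V\circ V'$ together with a cobordism $U'\colon t\to M'$ with $\dell_1U'=V'$. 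I would carry this out by a parametrized cut-and-paste argument: the collar data used to build $W$ identifies a distinguished neighborhood of the first face of any such $U$ with $V$ together with its outward collar, and excising this neighborhood produces $U'$ canonically up to contractible choice by another application of Appendix \ref{sec:isotopy_extension}. Once this is done, the cartesian property of $W$ follows, the cocartesian statement is dual, and Theorem \ref{thm:main_categorical} concludes the argument.
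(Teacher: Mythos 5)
Your overall strategy coincides with the paper's: reduce to the local-to-global principle, check weak unitality and local/level fibrancy via the isotopy extension theorem of Appendix~\ref{sec:isotopy_extension}, and construct the (co)cartesian lift as a ``bent product'': the cylinder on $M\cup_P V$ with the $V$-part rotated from the time direction into the first-face direction. (The paper realizes this bending as the image of the honest cylinder $[0,a]\times Y$, $Y= M\cup_P V$, under an explicit ambient homeomorphism $\bfB$ of $\IR_+^2$ applied in two corner coordinates; your slice-wise interpolation describes the same object, though as literally stated it does not obviously yield a smooth neat submanifold, which is one reason the paper works with the global homeomorphism.) Two of your preliminary claims are also slightly overstated --- the pullback is only canonically \emph{equivalent} to $\cob_{d,\cor{k}}$, not isomorphic on the nose, because of the comparison $\IR\to[0,\infty)$ in the freed-up coordinate --- but this is minor.

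The genuine gap is in the step you yourself flag as the main obstacle: verifying that the lift is cartesian. The proposed ``parametrized cut-and-paste'' does not work as described. A point of the relevant homotopy pullback consists of a cobordism $U\colon t\to M$ together with a point of the \emph{homotopy} fiber of $\dell_1$ over $\dell_1 U$, that is, a morphism $V'$ downstairs and a \emph{path} from $V\circ V'$ to $\dell_1 U$ in the morphism space. There is therefore no literal factorization of $\dell_1 U$, and no distinguished neighborhood of the first face of $U$ identified with $V$ and its outward collar; to run an excision argument one would first have to bring $U$ into such a normal form parametrically, show the space of ways of doing so is contractible, and then show that excision is homotopy inverse to composition with $W$. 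None of this is supplied, and all of it is delicate. The paper's proof of Lemma~\ref{lem:boundary_map_cocartesian_fibration} avoids it entirely: since the lift is $\bfB([0,a]\times Y)$, composition with it on vertical fibers over a fixed downstairs morphism factors as composition with the genuine cylinder $[0,a]\times Y$ followed by the homeomorphism of morphism spaces induced by the ambient homeomorphism $\bfB$. The first map is an equivalence because cylinders are weak units, the second is a homeomorphism, and the homotopy-pullback property follows with no geometric reconstruction of $U'$ from $U$. This reduction of cartesianness to the weak-unit property of cylinders is the key idea of the paper's proof and is missing from yours.
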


In the case $k=0$ this yields the sequence from Theorem \ref{thm:main_genauer}. --- To prove this theorem, we will verify the criterion of the local-to-global principle for the functor $\dell_{1}$. First both $\cob_{d,\cor k}$ and $\cob_{d-1,\cor{k-1}}$ are weakly unital and $\dell$ is a weakly unital functor, for $M\times [0,1]$ is a weak unit of the object $M$ in either category. The subcategory $\cob_{d-1}^\emptyset$ of $\cob_{d-1}$ consists entirely of weak units in $\cob_{d-1}$ and is clearly locally fibrant. 

Next we show that $\cob_{d-1, \cor{k}}$ is locally fibrant, that is, that the combined source-target map of $\cob_{d-1, \cor{k}}$ is a Serre fibration. Let $W$ be a morphism of $\cob_{d-1,\cor{k}}$. As a consequence of Theorem \ref{thm:isotopy_extension}, the restriction map
\[\Emb(W, \rc{\infty+d-1}{k}\times [0,1]) \to \Emb(M, \rc{\infty+d-1}{k}) \times \Emb(N, \rc{\infty+d-1}{k})\]
is a Kan fibration. (Indeed, identifying $\rc{\infty+d-1}{k}\times [0,1]$ with $\rc{\infty+d}{k+2}-\dell_{k+2}\dell_{k+1} \rc{\infty+d}{k+2}$, we can view $W$ as a neat submanifold of $\rc{\infty+d}{k+2}$. As such, we have a canonical homeomorphism
\[\Emb(W, \rc{\infty+d-1}{k}\times [0,1])\cong \Emb(W, \rc{\infty+d}{k+2})\]
under which the restriction to the boundary of $[0,1]$ above corresponds to restriction to $\Emb(W(A), \rc{\infty+d}{k+2}(A))$, with $A\subset \Delta^{\underline{k+2}}$ the subcomplex generated by the last two faces.)

With $S_\bullet$ the singular construction, we have a commutative square
\begin{equation}\label{eq:embeddings_and_morphisms}
\resizebox{\textwidth}{!}{
\xymatrix{
\coprod_{[W]} \vert S_\bullet \bigl(\Emb(W, \rc{\infty+d-1}{k}\times [0,1])\times (0,\infty)\bigr) \vert  \ar[r] \ar[d] & \coprod_{[M, N]}\vert S_\bullet \Emb(M, \rc{\infty+d-1}{k})\vert \times \vert S_\bullet \Emb(N, \rc{\infty+d-1}{k})\vert \ar[d]\\
\mor\cob_{d-1,\cor{k}} \ar[r] & \ob\cob_{d-1, \cor{k}} \times \ob\cob_{d-1,\cor k}
}
}
\end{equation}
where the upper horizontal map is still a Kan fibration because both $S_\bullet$ and $\vert - \vert$ preserve Kan fibrations. The vertical maps are Kan fibrations by letting $G_\bullet = S_\bullet \Diff(W)$ in the following criterion:

\begin{lemma}\label{lem:simplicial_group_fibration}
Let $G_\bullet$ be  a simplicial group acting simplicially on a simplicial set $X_\bullet$, with level-wise quotient simplicial set $(X/G)_\bullet$. Suppose that the action is free in each simplicial degree. Then the projection map $X_\bullet\to (X/G)_\bullet$ is a Kan fibration.
\end{lemma}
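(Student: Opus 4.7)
The plan is to verify the right lifting property of $X_\bullet\to (X/G)_\bullet$ against every horn inclusion $\Lambda^n_k\hookrightarrow \Delta^n$ directly, using two standard facts about simplicial groups: level-wise quotients of free actions are representable by sections up to the $G$-action, and simplicial groups are Kan complexes (Moore's theorem).

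So suppose we are given a solid lifting problem
\[\xymatrix{
\Lambda^n_k \ar[r]^\tau \ar[d] & X_\bullet \ar[d]\\
\Delta^n \ar[r]^\rho \ar@{.>}[ru] & (X/G)_\bullet.
}\]
First I would choose any lift $\sigma\colon \Delta^n\to X_\bullet$ of $\rho$; this exists because, $X/G$ being defined degree-wise as the orbit set, the map $X_n\to (X/G)_n$ is surjective for every $n$. Next, the simplices $\sigma\vert_{\Lambda^n_k}$ and $\tau$ have the same image in $(X/G)_\bullet$, hence they differ simplex-by-simplex by the action of (necessarily unique, by freeness) elements of $G$. This yields a well-defined map of simplicial sets $g\colon \Lambda^n_k\to G_\bullet$ with $\tau = g\cdot\sigma\vert_{\Lambda^n_k}$; well-definedness uses freeness in every degree to see that the face and degeneracy compatibilities of $\tau$ and $\sigma$ propagate to $g$.

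Since $G_\bullet$ is a simplicial group, it is a Kan complex by Moore's theorem, so the horn $g\colon \Lambda^n_k\to G_\bullet$ admits an extension $\tilde g\colon \Delta^n\to G_\bullet$. Then $\tilde g\cdot \sigma\colon \Delta^n\to X_\bullet$ is a simplicial map which restricts on $\Lambda^n_k$ to $g\cdot \sigma\vert_{\Lambda^n_k}=\tau$, and which projects to $\rho$ in $(X/G)_\bullet$ because the action is trivial on the quotient. Hence $\tilde g\cdot \sigma$ is the desired dotted lift.

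The only step that genuinely uses more than bookkeeping is the extension of $g$ to $\Delta^n$, which rests on $G_\bullet$ being Kan; everything else is a direct consequence of freeness of the action and the definition of the level-wise quotient.
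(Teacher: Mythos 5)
Your proof is correct and follows exactly the same route as the paper's: lift the base simplex by surjectivity of $X_n\to (X/G)_n$, use freeness to produce the unique "difference" map $g\colon\Lambda^n_k\to G_\bullet$, extend it using that simplicial groups are Kan, and translate the lift. The only detail you spell out beyond the paper is the (routine) verification that $g$ is simplicial, which is a welcome addition rather than a divergence.
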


\begin{proof}
Let $x\colon \Delta^n_\bullet \to (X/G)_\bullet$ be a simplicial map and $y'\colon (\Lambda^n_i)_\bullet \to X_\bullet$ be a partial lift. As $X_n\to (X/G)_n$ is surjective, there is a lift $y\colon \Delta^n_\bullet \to X_\bullet$ of $x$. As $G_\bullet$ acts freely in each degree, there is a unique map $g'\colon (\Lambda^n_i)_\bullet \to G_\bullet$ such that 
\[y'= g'\cdot y\vert_{(\Lambda^n_i)_\bullet}.\]
Now any simplicial group is Kan so $g'$ extends to a map $g\colon \Delta^n_\bullet\to G_\bullet$. Then $g\cdot y$ is a lift of $x$ restricting to $y'$.
\end{proof}

Moreover the  vertical maps in \eqref{eq:embeddings_and_morphisms} are surjective by definition. It is a formal consequence that the lower horizontal map is a Kan fibration as well. This proves that $\cob_{d-1,\cor{k}}$ is locally fibrant. A very similar argument shows that the functor $\dell_{1}$ is a local fibration, and hence a level fibration. Thus, to complete the proof of Theorem \ref{thm:genauer}, we are left to show:

\begin{lemma}\label{lem:boundary_map_cocartesian_fibration}
The functor $\dell_1$ is a cartesian and cocartesian fibration. 
\end{lemma}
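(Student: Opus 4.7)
The plan is to construct explicit cartesian and cocartesian lifts of morphisms through $\dell_1$, and then to verify by hand the homotopy-pull-back condition defining $P$-cartesianness. Since $\dell_1$ has just been shown to be a local fibration between locally fibrant topological categories (which are in particular locally fibrant semi-Segal spaces), Remark~\ref{rem:cartesian_topological_vs_Segal}\,(iii) allows me to work throughout in the topological-category sense, and the semi-Segal statement will follow automatically.

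\emph{Construction of cartesian lifts.} Given $M \in \cob_{d, \cor{k+1}}$ with $N := \dell_1 M \subset \rc{\infty+d-1}{k}$, and a morphism $V : N' \to N$ in $\cob_{d-1, \cor{k}}$ of length $b$, realised as a neat submanifold of $[0, b] \times \rc{\infty+d-1}{k}$, I will take the candidate source object to be
\[M' \;:=\; M \cup_{N} V,\]
where $V$ is pushed into the face $\dell_1\rc{\infty+d-1}{k+1} \cong \rc{\infty+d-1}{k}$ of the ambient space of $M$ and glued along the shared face $N$, then smoothed via a standard collar-neighbourhood straightening (in the neat setting of Appendix~\ref{sec:isotopy_extension}) so as to be a neatly embedded $\cor{k+1}$-submanifold of $\rc{\infty+d-1}{k+1}$ with $\dell_1 M' = N'$. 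For the cobordism itself I will take the $\cor{k+3}$-submanifold
\[W \;\subset\; [0, a] \times \rc{\infty+d-1}{k+1}\]
assembled from the trivial cylinder $[0, a] \times M$ and a reparametrised copy of $V$ placed along $[0, a] \times \dell_1\rc{\infty+d-1}{k+1}$, glued along $[0, a] \times N$ and smoothed at the new corner. By construction $\dell_- W = M'$, $\dell_+ W = M$ and $\dell_1 W = V$.

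\emph{Verification of cartesianness.} For any test object $T \in \cob_{d, \cor{k+1}}$, I have to show that the square
\begin{equation*}
\xymatrix{
\cob_{d,\cor{k+1}}(T, M') \ar[r]^{W \circ -} \ar[d]_{\dell_1} & \cob_{d,\cor{k+1}}(T, M) \ar[d]^{\dell_1} \\
\cob_{d-1,\cor{k}}(\dell_1 T, N') \ar[r]^{V \circ -} & \cob_{d-1,\cor{k}}(\dell_1 T, N)
}
\end{equation*}
is a homotopy pull-back. Given a cobordism $\tilde U : T \to M$ together with a factorisation of $\dell_1 \tilde U$ as $V \circ V'$ through $N'$, I plan to use the parametrised isotopy-extension theorem (Theorem~\ref{thm:isotopy_extension}) to canonically ``peel off'' the trailing copy of $V$ from the first-boundary face of $\tilde U$ and recognise the remainder as a cobordism $U : T \to M'$ with $\dell_1 U = V'$ and $W \circ U$ canonically isotopic to $\tilde U$. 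This provides an inverse-up-to-contractible-choice to the map from the upper-left corner into the strict pull-back, which is precisely the homotopy-pull-back condition. The cocartesian statement is entirely dual: either perform the analogous construction in which $V$ is attached on the outgoing side, or apply the cartesian statement to the opposite categories, noting that $\dell_1$ commutes with reversal of the cobordism parameter.

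The main obstacle is the family version of all of the above: every gluing, smoothing and isotopy-extension splitting must be performed continuously in simplex-parametrised families of neatly embedded submanifolds, so as to produce honest cartesian morphisms at the level of the simplicial categories $\cob_{d, \cor{k+1}, \bullet}$ and $\cob_{d-1, \cor{k}, \bullet}$. This is precisely what the parametrised isotopy-extension theorem of Appendix~\ref{sec:isotopy_extension} is engineered for, and the actual verification amounts to feeding the set-theoretic picture sketched above through that machinery.
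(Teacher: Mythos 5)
Your construction of the lift itself is essentially the one in the paper: the new object is obtained by gluing $V$ onto the first boundary face of $M$, and the lifted cobordism is a cylinder on $M$ together with the piece $V$ rotated from the boundary direction into the cobordism direction (the paper packages this rotation as an explicit ``bending homeomorphism'' $\bfB$ of the corner $\IR_+^2$, whose precise properties are not cosmetic but are used later). The genuine gap is in your verification of cartesianness. The step where you propose to ``peel off the trailing copy of $V$ from the first-boundary face of $\tilde U$'' and ``recognise the remainder as a cobordism $U\colon T\to M'$'' does not work as stated: a cobordism $\tilde U\colon T\to M$ whose face $\dell_1\tilde U$ factors as $V\circ V'$ carries no canonical decomposition of its interior induced by that decomposition of its boundary face, so there is nothing to peel off; and Theorem~\ref{thm:isotopy_extension} is the wrong tool for this --- it yields fibration properties of restriction maps of embedding spaces (which is what gives local fibrancy and the level-fibration property), not a splitting of a manifold along a splitting of one of its faces. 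Moreover, a point of the homotopy pull-back consists of $\tilde U$, $V'$ \emph{and a path} from $\dell_1\tilde U$ to $V\circ V'$, which your sketch ignores; producing a homotopy inverse ``up to contractible choice'' from this data is precisely the hard part, and no mechanism for it is given.

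What the paper does instead, and what your argument is missing, is a device that reduces cartesianness to weak unitality of cylinders. Since the vertical maps $\dell_1$ on morphism spaces are fibrations, it suffices to show that for each fixed boundary morphism the induced map on vertical fibers, $\cob_{d,\cor{k+1}}(T,M')/V'\to\cob_{d,\cor{k+1}}(T,M)/(V\circ V')$, is an equivalence. Because the bending map $\bfB$ is a self-homeomorphism of the ambient space $\rc{\infty+d}{k+2}$ which agrees with a translation on the region where the test morphisms live (property (iv) in the paper), it induces an actual \emph{homeomorphism} between these two fibers, under which composition with the constructed lift corresponds to composition with the cylinder $[0,a]\times M'$; the latter is an equivalence because cylinders are weak units. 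Without this (or a genuine substitute for your peeling step) the proof is incomplete. Two smaller points: the explicit construction only produces a lift of the composite of $V$ with a cylinder, rather than of $V$ itself, and one must invoke local fibrancy to repair this, as the paper notes; and your reduction of the cocartesian case to the cartesian one by reflecting the cobordism coordinate is exactly what the paper does.
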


\begin{proof}
We first show that it is a cocartesian fibration. Let $(W,a)\colon M\to N$ be a cobordism between $(d-1)$-dimensional $\cor k$-manifolds, and $X$ a $d$-dimensional $\cor{k+1}$-manifold so that $\dell_1 X=M$; we will construct a $\dell_1$-cocartesian lift of the homotopic morphism $W':=(N\times [0,a])\circ W$ starting at $X$. (By local fibrancy, it follows easily the general existence of $\dell_1$-cocartesian lifts.) 

Basically the cocartesian lift is obtained by manipulating the corner structure of the manifold $[0,a]\times (X\cup_M W)$: We introduce additional corners at $\{0\}\times M$ and straighten the corners at $\{0\}\times N$; the resulting manifold with corners can then be viewed as morphism from $X$ to $X\cup_M W$ with vertical boundary $W'$. This will be the desired cocartesian lift. 

In more precise terms, we will change the canonical embedding of $[0,a]\times (X\cup_M W)$ into 
\[[0,a]\times \rc{\infty+d}{k+1}\subset \IR_+\times \rc{\infty+d}{k+1}=\rc{\infty+d}{k+2}\] 
by an automorphism of $\rc{\infty+d}{k+2}$. This automorphism will have the effect of changing, at the same time, the corner structure and the decomposition of the boundary of $\rc{\infty+d}{k+2}$; and therefore of any neat submanifold of $\rc{\infty+d}{k+2}$. This technique has been employed, in a very similar situation, in \cite[Appendix]{RS_Hcob}. 

To construct this automorphism, we let 
\[\bfB\colon \IR_+^2\to \IR_+^2\]
be a homeomorphism, which is a diffeomorphism except at $p_0:=(0,0)$ and $p_1:=(0,a)$, such that (see figure \ref{fig:bending_morphism}):
\begin{figure}[t]
 \centering
 \includegraphics[
 scale=0.3]{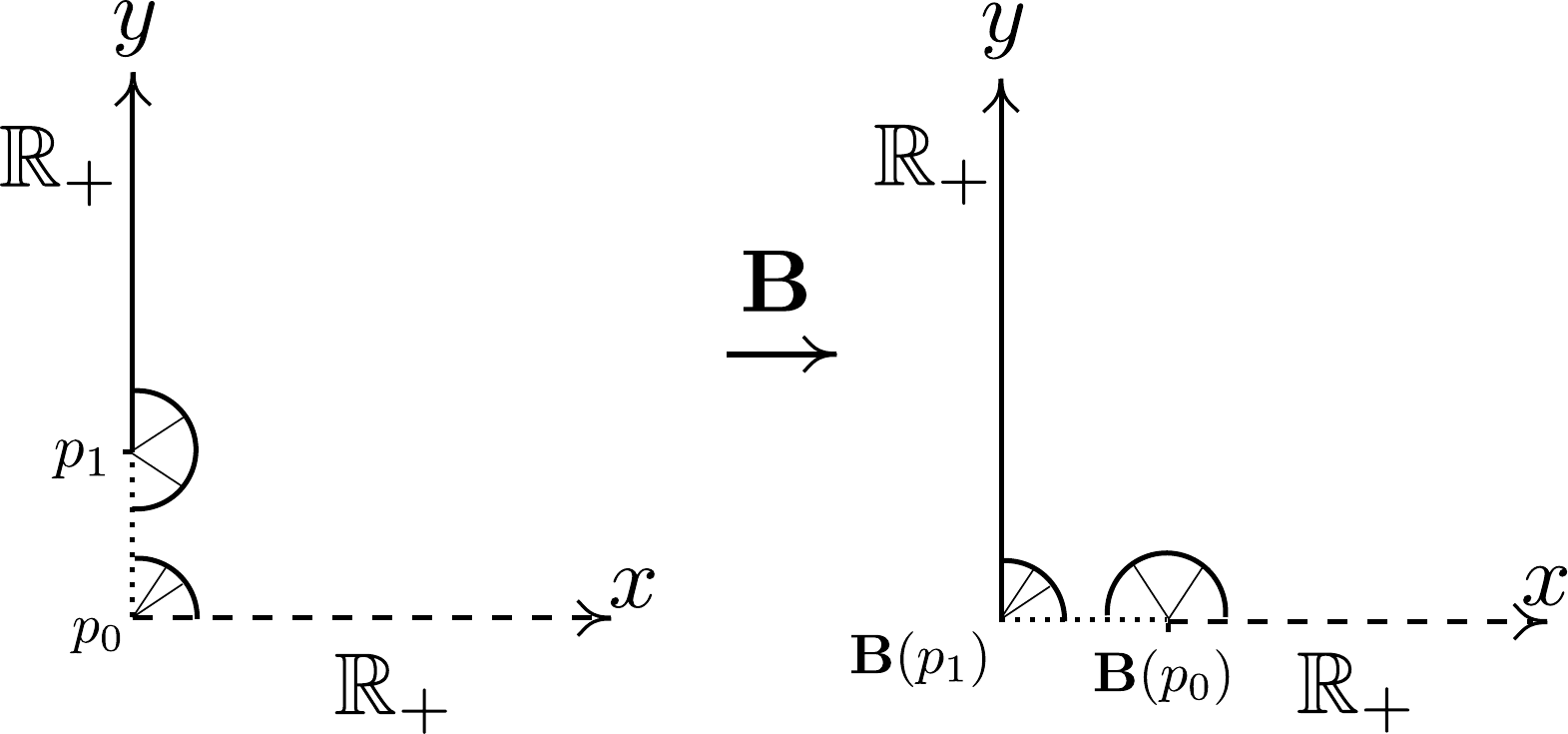}
 \caption{}
 \label{fig:bending_morphism}
\end{figure}

\begin{enumerate}
 \item $\bfB(p_0)=(0,a)$; $\bfB(p_1)=(0,0)$; and $\bfB$ maps the solid/dotted/dashed pieces of the boundary, as displayed on the left in Figure \ref{fig:bending_morphism}, isometrically to the corresponding pieces on the right. 
 \item $\bfB$ is cylindrical in a neighborhood of $\dell \IR_+^2-\{p_0, p_1\}$
 \item In a neighborhood of $p_1$, $\bfB$ preserves the radial coordinate and scales the angle by a diffeomorphism
 \[\lambda\colon [0,\pi]\to [0,\pi/2 ]\]
 which is $\id$ on $[0, \pi/3]$ and $\id-\pi/2$ on $[2\pi/3, \pi]$. Similarly, in a neighborhood of $p_0$, $\bfB$ scales the angle by the inverse of $\lambda$.
 \item The restriction of $\bfB$ to $[a/2,\infty)\times \IR_+$ is translation by $a$ to the right.
\end{enumerate}

\begin{remark*}
A simpler condition in (iii) would be that $\lambda$ is just division by two, so that $\bfB$ just scales the angle by a factor of 2 in a neighborhood of $(0,a)$. However, this condition is inconsistent with requirement (ii). Requirements (ii) and (iii) together guarantee that the image of a neat submanifold under $\bfB$ is again a neat submanifold. 
\end{remark*}

To construct a map $\bfB$ with the required properties, we use an auxiliary homeomorphism 
\[\Psi\colon \IR_+\times \IR_+\to \IR\times \IR_+\]
which is a diffeomorphism except at the corner point $(0,0)$, such that
\begin{enumerate}
 \item in a neighborhood of $(0,\infty)\times \{0\}$, $\Psi$ is the identity;
 \item in a neighborhood of $\{0\}\times (0, \infty)$, $\Psi$ is rotation by $\pi/2$;
 \item in a neighborhood of the corner point, $\Psi$ preserves the radial coordinate and scales the angle by the inverse of $\lambda$;
 \item $\Psi$ is the identity on $[a/2, \infty)\times \IR_+$.
\end{enumerate}
Such a map $\Psi$ was essentially constructed in \cite[Appendix]{RS_Hcob}; in fact it can be obtained from the map $\Phi\colon [0,1]\times \IR_+\to \IR_+\times \IR_+$ in loc.\ cit.\ by first extending, via the identity, to a map
\[(-\infty, 1]\times \IR_+\to \IR\times \IR_+\]
and then flipping and rescaling the first coordinate. Then, a map $\bfB$ as required is obtained as the composite map that first applies $\Psi$, then shifts by $a$ in the first coordinate, and then applies $\Psi\inv$.

We call $\bfB$ the bending homeomorphism. It induces a bending homeomorphism (denoted by the same letter)
\[\bfB\colon \rc{\infty+d}{k+2}\to \rc{\infty+d}{k+2}\]
by applying $\bfB$ in the first two coordinates of the factor $\IR^k_+$ and the identity in the other ones. 

Applying $\bfB\inv$ to the subspace  
\[X\cup W\subset [0,a]\times \rc{\infty+d}{k+1}\subset \IR_+\times \rc{\infty+d}{k+1}=\rc{\infty+d}{k+2}\]
yields a subspace $Y\subset \rc{\infty+d}{k+2}$, which is a neatly embedded submanifold of $0\times \rc{\infty+d-1}{k+1}$.

Then $C:= \bfB([0,a]\times Y)$ is a neat submanifold of $[0,2a]\times \rc{\infty+d-1}{k+1}$. Hence $C$ defines a morphism in $\cob_{d, \cor{k+1}}$ and we claim that is indeed cocartesian. In other words, we claim that for any object $Z$ of $\cob_{d,\cor{k+1}}$, with $P:=\dell_{1} Z$, the diagram
\[\xymatrix{
\cob_{d, \cor{k+1}}(Y, Z) \ar[rr]^{-\circ C} \ar[d]^{\dell_{1}}
  && \cob_{d,\cor{k+1}}(X,Z) \ar[d]^{\dell_{1}} 
\\
\cob_{d-1, \cor k}(N, P) \ar[rr]^{-\circ W'} 
  && \cob_{d-1,\cor{k}}(M,P)
}\]
is a homotopy pull-back. We prove the equivalent assertion that the induced map on all vertical fibers
\begin{equation}\label{eq:cocartesian_property}
\cob_{d,\cor{k+1}}(Y, Z)/V \xrightarrow{-\circ C} \cob_{d, \cor{k+1}}(X,Z)/(V\circ W') 
\end{equation}
is an equivalence.

By the special form of $Y=X\cup_M W$, applying $\bfB$ induces a homeomorphism 
\[\bfB\colon \cob_{d, \cor{k+1}}(Y, Z)/V \xrightarrow\simeq \cob_{d, \cor{k+1}}(X,Z)/(V\circ W).\]
If $V$ has length at least $a/2$, then the triangle
\[\xymatrix{
 \cob_{d,\cor{k+1}}(Y, Z)/V \ar[rr]^{-\circ \bfB([0,a]\times Y)} \ar[rrd]_{-\circ ([0,a]\times Y)}
   && \cob_{d,\cor{k+1}}(X, Z)/(V\circ W')
 \\
 && \cob_{d,\cor{k+1}}(Y, Z)/(V\circ (N\times [0,a]) \ar[u]_\bfB^\cong
}\]
is commutative by property (iv) of the map $\bfB$. As $[0,a]\times Y$ is a weak unit, the diagonal map is an equivalence, from which we conclude that the horizontal map is an equivalence as well. 

This shows that the map \eqref{eq:cocartesian_property} is an equivalence provided $V$ has length at least $a/2$. But any morphism is homotopic to one of length $a/2$, so that \eqref{eq:cocartesian_property} is an equivalence for all $V$.

It follows that our functor $\dell_1$ is a cocartesian fibration, and also a cartesian fibration, because $\cob_{d,\cor{k}}\cong \cob_{d,\cor{k}}\op$ by reflecting everything in the last coordinate.
\end{proof}

\subsection*{Tangential structures}

Our proof of Theorem \ref{thm:genauer} also has a generalization to cobordism categories with tangential structures. We give the relevant definitions and results and show how to modify the proof. Recall from \cite{Genauer} that a \emph{$\cor k$-space} is a $\power{\underline k}$-shaped diagram in the category of topological spaces and continuous maps, where $\power{\underline k}$ denotes the power set of $\underline k:=\{1,\dots, k\}$. A \emph{$\cor k$-vector bundle} is $\power{\underline k}$-shaped diagram in the category of vector bundles and fiberwise linear maps. A  \emph{fiberwise linear map} $\xi\to \eta$ between $\cor k$-vector bundles $\xi$, $\eta$ is a natural transformation between the functors $\xi$ and $\eta$. Such a fiberwise linear map is called a \emph{$\cor k$-bundle map} if each $\xi(A)\to \eta(A)$ is a bundle map of ordinary vector bundles, that is, a linear isomorphism in each fiber. 

\begin{definition}
A \emph{collar} on a $\cor k$-vector bundle $\xi$ is a collection of bundle maps (isomorphisms in each fiber)
\[c_{AB}\colon \eps^{B-A}\oplus \xi(A) \to \xi(B), \quad A\subset B\subset \underline k\]
such that
\begin{enumerate}
 \item each $c_{AB}$ extends the structure map $\xi(A)\to \xi(B)$, and
 \item for any $A\subset B\subset C\subset \underline k$, the following triangle commutes:
 \[ \xymatrix{
 \eps^{C-A}\oplus \xi(A) 
    \ar[rr]^{\id_{\eps^{C-B}}\oplus c_{AB}} 
    \ar[rrd]_{c_{AC}}
  && \eps^{C-B}\oplus \xi(B) 
    \ar[d]^{c_{BC}} 
  \\
   &&  \xi(C).
 }\]
\end{enumerate}
A $\cor k$-bundle map $f\colon \xi\to \eta$ between collared $\cor k$-bundles is called \emph{collared} if for each $A\subset B\subset\underline k$, the resulting square  commutes:
\[\xymatrix{
 \eps^{B-A}\oplus\xi(A) \ar[r] \ar[d]^{\id\oplus  f(A)}  & \xi(B) \ar[d]^{f(B)}
 \\
 \eps^{B-A}\oplus\eta(A) \ar[r] & \eta(B).
}\]
The category of collared $\cor k$-vector bundles and collared maps will be denoted by $\colbun{k}$. The \emph{rank} of a collared $\cor k$-vector bundle $\xi$ is defined to be the  rank of the ordinary vector bundle $\xi(\underline k)$.
\end{definition}


\begin{example}\label{ex:collared_bundles}
 \begin{enumerate}
  \item The tangent bundle of a neat $\cor k$-manifold $M\subset \rc n k$.
  \item For a vector bundle $\xi$ over $X$, there is a canonical extension to a collared $\cor k$-vector bundle, still denoted by $\xi$, such that $\xi(\underline k)\cong \xi$. The bundle $\xi(A)$ is, up to unique isomorphism, characterized by requiring a bijection between the sets of bundle maps
  \[\map(\eta, \xi(A)) \cong \map (\eta\oplus \eps^{\underline k-A}, \xi)\]
  which is natural in the vector bundle $\eta$. It follows that for any $\cor k$-vector bundle $\eta$, evaluation at $\underline k$ induces a bijection
  \[\map(\eta, \xi) \to \map(\eta(\underline k), \xi(\underline k))\]
  between $\cor k$-bundle maps $\eta\to \xi$ and bundle maps $\xi(\underline k)\to \eta(\underline k)$. 
  \end{enumerate}
\end{example}

Now let $\theta$ be a collared $\cor k$-vector bundle of rank $d$. We define $\cob_{\theta}$, the $\theta$-cobordism category of $\cor k$-manifolds as follows, for simplicity only in simplicial degree 0: A morphism is a morphism $W$ of $\cob_{d,\cor k}$ together with a collared bundle map $l_W\colon TW\to \theta$. An object is an object $M$ of $\cob_{d, \cor k}$ together with a collared bundle map $l_M\colon \eps\oplus TM\to \theta$. The source/target of $(W, l_W)$ is $(\dell_\pm W, l_{\dell_\pm W})$, where we identify the tangent bundle of an interval with the trivial bundle $\eps$ by sending $\dell/\dell x$ to $1$.

For a (collared) $\cor {k+1}$-vector bundle $\theta$, we define:
\begin{enumerate}
 \item $\dell_{1}\theta$ as the first face, viewed as a (collared) $\cor k$-bundle -- that is, restriction along the embedding 
 \[j_1\colon \power{\underline k}\subset \power{\underline{k+1}}, \quad A\mapsto A+1.\]
 \item $i_{1}^*\theta$ to be the (collared) $\cor k$-vector bundle where the first face is omitted -- that is, restriction of the diagram along the embedding
 \[i_1\colon \power{\underline k}\to \power{\underline{k+1}}, \quad A\mapsto A+1 \cup\{1\}.\]
\end{enumerate}
Note that the rank of $\theta$ does not change under $i_{1}^*$, but drops under $\dell_{1}$ by one. 

\begin{theorem}\label{thm:genauer_theta}
For any collared $\cor k$-bundle, the square
\[\xymatrix{
 {\cob_{i^*_{1}\theta}} \ar[r]^{i_{1}} \ar[d] & \cob_\theta \ar[d]^{\dell_{1}} \\
 \cob^\emptyset \ar[r] & \cob_{\dell_{1}\theta}
}\]
becomes a homotopy pull-back square after geometric realization. Since $B\cob^\emptyset$ is contractible, we obtain a fibration sequence
\[B\cob_{i^*_{1}\theta} \xrightarrow{Bi_{1}} B\cob_\theta \xrightarrow{B\dell_{1}} B\cob_{\dell_{1}\theta}.\]
\end{theorem}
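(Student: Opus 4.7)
The plan is to verify the hypotheses of the local-to-global principle (Theorem \ref{thm:main_categorical}) for $\dell_1\colon \cob_\theta\to \cob_{\dell_1\theta}$, paralleling the proof of Theorem \ref{thm:genauer}. Weak unitality is straightforward: any object $(M,l_M)$ admits as a weak unit the cylinder $M\times[0,1]$ equipped with the tangential structure pulled back from $l_M$ along the canonical identification $T(M\times[0,1])\cong \eps\oplus TM$, and these cylindrical units are preserved by $\dell_1$ and by $i_1$. Local fibrancy of the categories and the level-fibration property of $\dell_1$ should then follow by augmenting the diagram \eqref{eq:embeddings_and_morphisms} with spaces of collared bundle maps $TW\to \theta$ (respectively $\eps\oplus TM\to \theta$) fibered over the spaces of neat allowable embeddings; these mapping spaces of bundle maps form Kan fibrations over the embedding spaces, and the same formal argument (Lemma \ref{lem:simplicial_group_fibration} applied to $S_\bullet\Diff(W)$ acting on the augmented total space) propagates the Kan fibration property to the source-target map and to $\dell_1$.

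The main obstacle, and the essential new content, will be the construction of $\dell_1$-cocartesian lifts. Given $(W,l_W)\colon M\to N$ in $\cob_{\dell_1\theta}$ and an object $(X,l_X)$ in $\cob_\theta$ with $\dell_1(X,l_X)=(N,l_W|_N)$, on the underlying manifolds I would form $Y=\bfB^{-1}(X\cup_N W)$ and $C=\bfB([0,a]\times Y)$ exactly as in Lemma \ref{lem:boundary_map_cocartesian_fibration}. The delicate step is producing a collared $\theta$-structure $l_C$ on $C$. First I would glue $l_X$ and $l_W$ into a collared bundle map $l_Y\colon \eps\oplus TY\to\theta$: along the common face $N$, the collar $c$ of $\theta$ identifies the restriction of $\theta$ to the first face with $\eps\oplus \dell_1\theta$, which lets $l_W$ be extended into the normal direction; the assumed compatibility $\dell_1 l_X=l_W|_N$ guarantees that this extension agrees with $l_X$. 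Taking the product with $[0,a]$ and transporting through $\bfB$ then gives $l_C$. The hard part to check carefully is that $l_C$ is smooth and collared across the two corner points where $\bfB$ fails to be a diffeomorphism; I expect the collaredness hypothesis on $\theta$ to rescue us, since by construction $\bfB$ acts radially in the collared pair of coordinates at those corners, and the collared structure maps of $\theta$ are precisely designed to glue fiberwise-linear data across the corners of a $\cor k$-manifold.

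Once $l_C$ has been constructed, verifying that $(C,l_C)$ is $\dell_1$-cocartesian is a direct translation of the argument in Lemma \ref{lem:boundary_map_cocartesian_fibration}: the bending $\bfB$ induces a homeomorphism on the relevant hom-fibers now equipped with tangential structures, and precomposition with the cylinder $[0,a]\times Y$ is an equivalence because that cylinder is a weak unit. Cartesian lifts are then obtained by the same symmetry argument, reflecting in the last coordinate, which acts on collared tangential structures in the obvious way. With all hypotheses of Theorem \ref{thm:main_categorical} verified, the homotopy pull-back square, and hence the fibration sequence, follows exactly as in the untangential case.
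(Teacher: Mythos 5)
Your overall strategy is the paper's: verify the hypotheses of the local-to-global principle for $\dell_1$ on the $\theta$-decorated categories, reuse the bending construction of Lemma \ref{lem:boundary_map_cocartesian_fibration}, and equip the cocartesian lift $C=\bfB([0,a]\times Y)$ with a compatible $\theta$-structure. The gluing of $l_X$ and $l_W$ into a collared bundle map over $X\cup_M W$ via the collar of $\theta$ (this is Remark \ref{rem:k_vs_k+1} in the paper) is also as intended. The gap sits exactly at the point you flag and then wave past: ``transporting through $\bfB$'' is not merely delicate --- it does not make sense as stated. The derivative of $\bfB$ does not extend to a bundle map of $T\rc{2}{2}$ over the two corner points: $D\bfB$ is the identity along the positive $x$-axis and along $0\times(a,\infty)$, but rotation by $\pi/2$ along $0\times(0,a)$, so there is no continuous fiberwise-linear extension over $(0,0)$ and $(0,a)$. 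The collaredness of $\theta$ cannot rescue this, because the obstruction lives in the source of the would-be composite (the tangent bundle of $\rc{2}{2}$, hence of $Y$ and $[0,a]\times Y$, near the corners), not in the target $\theta$.

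The actual fix has two ingredients you are missing. First, one restricts to manifolds and morphisms that are $\eps$-neatly embedded at the non-smooth points of $\bfB$, hence cylindrical there; for small $\eps$ this does not change the homotopy type of the hom-spaces, and the concluding homeomorphism of hom-fibers is only available on these subspaces $\cob_\theta^\eps(-,-)$. Second, one replaces $D\bfB$ by a modified bundle map $\tilde D\bfB(p,v)=(\bfB(p),D_{\tilde p}\bfB(v))$, where $p\mapsto\tilde p$ is a continuous self-map of $\rc{2}{2}$ avoiding the two corners and equal to the identity outside their $\eps$-neighborhoods; neatness of the embeddings guarantees that $\tilde D\bfB$ still restricts correctly to the relevant sub-bundles such as $\langle\dell/\dell x\rangle\oplus i_1^*TY$, and $l'_C$ is then defined as a composite through $\tilde D\bfB$ and the glued map. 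With these two points supplied, the rest of your outline (decorated cylinders as weak units, local and level fibrancy via an augmented version of \eqref{eq:embeddings_and_morphisms}, cartesian lifts by reflection in the last coordinate) matches the paper's argument.
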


The proof is a modification of the proof of Theorem \ref{thm:genauer}. Let us show how put a $\theta$-structure  on the $\dell_1$-cocartesian morphism $C$ constructed above from the datum of $W$ and $X$, provided that $W$ comes equipped with a $\dell_1\theta$-structure, and $X$ with a $\theta$-structure, compatible over $M$. 

Roughly speaking, one observes that the the union $W\cup X$ inside $\rc{\infty+d}{k+2}$ canonically inherits a  tangential structure, to which we apply the differential of $\bfB\inv$ (actually, this differential does not exist at the non-smooth points of $\bfB$, so we consider  a small modification near the non-smooth points, called $\tilde D \bfB\inv$). This yields a  tangential structure on $Y=\bfB\inv(W\cup X)$  and, consequently, a tangential structure on $[0,a]\times Y$. After applying $\tilde D\bfB$, we obtain a tangential structure on $C=\bfB([0,a]\times Y)$ as required.

To make this strategy precise, we first note:

\begin{remark}\label{rem:k_vs_k+1}
There are compatible 1-to-1 correspondences:
\begin{enumerate} 
 \item Between collared $\cor{k+1}$-bundles $\xi$, and maps of collared $\cor k$-bundles of the form $\eps\oplus \xi'\to \xi''$, and
 \item Between collared $\cor{k+1}$-bundle maps $f\colon \xi\to \eta$ and commutative squares of collared $\cor k$-bundle maps 
\[\xymatrix{
 \eps\oplus \xi'\ar[d]^{\id\oplus f'} \ar[r] & \xi''  \ar[d]^{f''} \\
 \eps\oplus \eta' \ar[r] & \eta''
}\]
\end{enumerate}
\end{remark}

In technical, and more precise terms, the Remark states that the category $\colbun{k+1}$ is isomorphic to the category $(\eps\oplus -)/\colbun{k}$. 

\begin{proof}
Viewing the cube $\power{\underline {k+1}}$ as a product $\power{\underline k}\times [1]$, a $\cor{k+1}$-bundle $\xi$  may be viewed as a map of $\cor k$-bundle $\dell_1 \xi\to i_1^*\xi$. Now we observe that a collar $c$ on $\xi$ determines and is determined by a compatible collection of maps
\[c_{A-\{i\}, A}\colon \eps\oplus \xi(A-\{i\})\to \xi(A)\]
for $i\in A\subset \underline k$. Such a collection may be viewed, in turn, as the datum of a collar both on  $\dell_1 \xi$ and on $i_1^*\xi$, and a map
\[\eps\oplus \dell_1 \xi\to i_1^*\xi\]
of collared $\cor k$-bundles. From this description it is clear that a map $f\colon \xi\to \eta$ is collared if and only if the maps $\dell_1 f$ and $i_1^*f$ are collared, and the above square commutes.
\end{proof}

Let us  call $x$ and $y$ the first two coordinates of $\IR^{k+2}_+$ within $\rc{\infty+d}{k+2}$. Then, using Remark \ref{rem:k_vs_k+1}, the compatible tangential structures on $X$ and $W$ amount to the datum of two collared  $\cor k$-bundle maps
\[
l'_X\colon \langle \dell/\dell x\rangle\oplus i_1^*TX  \to i_1^*\theta, \quad 
l_W\colon TW \to \dell_{1} \theta, \quad
\]
such that the following diagram commutes:
\[\xymatrix{
 \langle \dell/\dell x\rangle \oplus  i_1^* TX \ar[rr]^{l'_X} 
 && i_1^* \theta
 \\
 \langle \dell/\dell x, \dell/\dell y\rangle \oplus TM \ar[r] \ar[u]
 & \langle \dell/\dell y\rangle\oplus TW  \ar[r]^(.6){\id\oplus l_W}
 & \eps\oplus \dell_1\theta. \ar[u]
}\]

Also, to endow $C:=\bfB([0,a]\times Y)$ (notation from above) with a $\theta$-structure compatible with the given structures on $X$ and $W$, it is enough to construct a collared bundle map $l'_C\colon i_1^*TC \to i_1^*\theta$, extending $l'_X$ and such that the following square is commutative:
\[\xymatrix{
i_1^*TC \ar[r]^{l'_C}
& i_1^*\theta 
\\
\langle \dell/\dell y\rangle \oplus TW \ar[u] \ar[r]^(.6){\id\oplus l_W}
& \eps\oplus \dell_1\theta. \ar[u]
}\]

We form the  push-out 
\[\xi := \langle \dell/\dell x\rangle \oplus i_1^* TX \cup  \langle \dell/\dell y\rangle \oplus  TW\]
(union of sub-bundles of $T\rc{\infty+d}{k+2}$), which is a collared $\cor k$-bundle over the $\cor k$-space $i_1^*(X)\cup W$. 

\begin{remark*}
The following may help to understand this construction: Choose an arbitrary extension $E$ of $X$ and $W$ to an (open) neat submanifold of $\rc{\infty+d}{k+2}$. If $k=0$, then the $\cor 0$- (that is, ordinary) vector bundle $\xi$ is canonically isomorphic to the restriction $TE\vert_{X\cup W}$. Here $TE$ is to be viewed as an ordinary vector bundle by forgetting the faces. In the case of general $k$, $\xi$ is canonically isomorphic to the restriction of $(i_1^* i_2^* TE)\vert_{X\cup W}$.
\end{remark*}

By construction, $\xi$ and comes with a collared $\cor k$-map $L\colon \xi\to i_1^*\theta$. Now recall that $Y=\bfB\inv(W\cup X)$ is a $\cor{k+1}$-space with first face $N$. We will next construct a collared $\cor k$-bundle map 
\[\tilde D\bfB\colon \langle \dell/\dell x\rangle \oplus i_1^* TY\to \xi.\]
Roughly, this map should be the derivative $D\bfB$ of $\bfB$. But note that the derivative of the smooth map $\bfB$ away from $p_0=(0,0)$ and $p_1=(0,a)$ does not extend to a bundle map on all of $T\rc22$. (Indeed $D\bfB$ is the identity on the positive $x$-axis and on $0\times (a,\infty)$, but rotation by $\pi/2$ on $0\times (0,a)$.) Therefore we cannot use this naive idea and need the following modification. 

Recall that by requirement (iii) for the map $\bfB$, there are neighborhoods $V_{0/1}$ of $p_{0/1}$ in $\rc22$, where $\bfB$ only scales the angle and preserves the radial coordinate. Now choose some $\eps>0$ such that $\eps$-neighborhoods $B_\eps(p_{0/1})$ around $p_{0/1}$ are contained in $V_{0/1}$, respectively.

 Choose a continuous map 
\[\rc22\to \rc22-\{p_0, p_1\}, \quad p\mapsto \tilde p\]
which is the identity except in the $\eps$-neighborhoods of $p_0$ and $p_1$. Such a map can be found by continuously retracting the respective $\eps$-neighborhoods, which are topological half-balls, onto their boundary half-circles. Then we define the map $\tilde D\bfB$ as the map of ordinary vector bundles
\[\tilde D\bfB\colon T\rc22\to T\rc22, \quad (p,v)\mapsto (\bfB(p), D_{\tilde p}\bfB(v)).\]

\begin{remark*}
 \begin{enumerate}
  \item By construction, $\tilde D \bfB$ agrees with the usual differential $D\bfB$ outside the $\eps$-neighborhoods of the non-smooth points.
  \item While domain and target of $\tilde D \bfB$ are actually $\cor 2$-vector bundles, $\tilde D \bfB$ it is not a map of $\cor 2$-vector bundles. In fact, even the map on base spaces does not extend to a map of $\cor 2$-spaces. (Recall that the main objective of $\bfB$ was precisely to change the corner structure.)
 \end{enumerate}
\end{remark*}

Since 
\[T\rc{\infty+d}{k+2} = T\rc22\times T\rc{\infty+d-2}{k},\]
we may form 
\[\tilde D\bfB:=\tilde D\bfB\times \id\colon T\rc{\infty+d}{k+2}\to T\rc{\infty+d}{k+2}.\]
Again, while domain and target of $\tilde D \bfB$ are actually $\cor {k+2}$-vector bundles, $\tilde D \bfB$ it is not a map of $\cor {k+2}$-vector bundles, but only a map of the $\cor k$-vector bundles $i_1^*i_2^*T\rc{\infty+d}{k+2}$ (that is, after forgetting the first two faces, given by the equations $x=0$ and $y=0$).

Now let us first assume that both $W$ and $X$ are $\eps$-neatly embedded at the non-smooth points of $\bfB$. Then, the collared $\cor k$-bundle map $\tilde D\bfB$ restricts to a collared $\cor k$-bundle map between sub-bundles
\[ 
\tilde D\bfB \colon \langle \dell/\dell x\rangle \oplus i_1^*TY \to  \xi.
\]
To see this, we distinguish four different cases, where we denote by $\pi\colon \rc{\infty+d}{k+2}\to \rc{2}{2}$ the projection onto the first two coordinates.
\begin{enumerate}
 \item Over $\bfB\inv (X)-\pi\inv B_\eps(p_1) \subset Y$, $\tilde D \bfB$ is the differential of $\bfB$. Therefore it sends this part of $i_1^* TY$ to $i_1^*TX$ and preserves $\langle \dell/\dell x\rangle$.
 \item Over $\bfB\inv (W)-\pi\inv(B_\eps(p_0)\cup B_\eps(p_1)) \subset Y$, $\tilde D\bfB$ is also the differential of $\bfB$. Therefore it sends this part of $i_1^* TY$ to $TW$ and identifies $\langle \dell/\dell x\rangle$ with $\langle \dell/\dell y\rangle$.
 \item Over $\pi\inv B_\eps(p_{0})$,  the $\cor k$-bundle $\langle \dell/\dell x\rangle \oplus i_1^*TY$ splits as a product $TB_\eps(p_{0})\times T(\bfB\inv N)$. The map $\tilde D\bfB$ maps this product into the product $TB_\eps(\bfB(p_0))\times TN$, which agrees with the bundle $\xi$ over $\pi\inv B_\eps(\bfB(p_0))$. 
 \item Over $\pi\inv B_\eps(p_1)$, the $\cor k$-bundle $\langle \dell/\dell x\rangle \oplus i_1^*TY$ splits as a product $TB_\eps(p_{0})\times T(\bfB\inv M)$ and we continue similarly as in (iii). 
\end{enumerate}
A similar reasoning shows that $\tilde D\bfB$ also restricts to a collared $\cor k$-bundle map 
\[\tilde D \bfB \colon i_1^*T([a,b]\times Y) \to i_1^*TC .\]
Then, we define $l'_C$ as the composite of the following collared $\cor k$-bundle maps:
\[i_1^*TC \xrightarrow{\tilde D\bfB\inv}  i_1^*T ([a,b]\times Y) \xrightarrow{\proj} \langle \dell/\dell x \rangle \oplus i_1^* TY \xrightarrow {\tilde D\bfB} \xi \xrightarrow L i_1^*\theta.\]

This finishes the construction of the $\theta$-structure on $C$. 
Applying $(\bfB, \tilde D\bfB)$ induces a homeomorphism
\[\bfB\colon \cob_\theta^\eps(Y, Z)/V \to \cob_\theta^\eps(X,Z)/(V\circ W)\]
where the addition upper index $\eps$ indicates that we only consider those morphisms that are $\eps$-neatly embedded at the non-smooth points of  $\Phi$ -- this change does not affect the homotopy type, provided $\eps$ is small enough. Then, the argument continues as above.

\section{The B\"okstedt--Madsen delooping}\label{sec:delooping}

B\"okstedt--Madsen \cite{BM} showed that a $j$-fold, \emph{nonconnective} delooping of $B\cob_d$ can be obtained as the classifying space of the $(j+1)$-tuple cobordism category $\cob_d^{j+1}$. By definition, a $j$-tuple category is defined iteratively as a (not necessarily unital) category object in the category of $(j-1)$-tuple categories. A $j$-tuple category $\calc$ comes with a $j$-fold multinerve $N^j_\bullet(\calc)$, which is a $j$-fold semi-simplicial space. The geometric realization of this $j$-fold semi-simplicial space is the \emph{classifying space} $B^j(\calc):=\vert N^j_\bullet(\calc)\vert$ of $\calc$.

For the convenience of the reader, we recollect the necessary definitions at this place. Let $\bfR$ be a category in which pull-backs exist.

(i) A (non-unital) \emph{category object} $\calc$ in $\bfR$ of two objects $\ob(\calc)$ and $\mor(\calc)$ of $\bfR$, and morphisms in $\bfR$
\[s, t\colon \mor(\calc)\to\ob(\calc), \quad \circ\colon \mor(\calc)\times_{\ob(\calc)}\mor(\calc) \to \mor(\calc)\]
satisfying the analogous conditions as the one of a (non-unital) topological category. A functor $\calc\to \cald$ of category objects in $\bfR$ is a pair of morphisms $\ob(\calc)\to \ob(\cald)$ and $\mor(\calc)\to \mor(\cald)$ in $\bfR$, which is compatible with $s$, $t$, and $\circ$. We obtain a category $\Cat(\bfR)$ of category objects in $\bfR$ and their functors. In $\Cat(\bfR)$, pull-backs exist and are given by pull-backs on objects and morphisms. A pull-back-preserving functor $F\colon \bfR\to \bfS$ induces a pull-back-preserving functor $\Cat(F)\colon \Cat(\bfR)\to \Cat(\bfS)$.  

(ii) The \emph{nerve} of a category object $\calc$ in $\bfR$ is the semi-simplicial object in $\bfR$ whose $n$-th level is the iterated pull-back
\[\mor(\calc)\times_{\ob(\calc)} \dots \times_{\ob(\calc)} \mor(\calc).\]
The nerve defines a pull-back-preserving functor
\[N_\bullet \colon \Cat(\bfR)\to s(\bfR)\]
from category objects in $\bfR$ to semi-simplicial objects in $\bfR$. The nerve is a full embedding of categories so, equivalently, a category object in $\bfR$ may be viewed as a special kind of semi-simplicial object in $\bfR$.

The nerve functor gives rise to a pull-back-preserving functor from $j$-tuple categories to $j$-fold semi-simplicial spaces, the $j$-fold multinerve: It is defined iteratively as the composite
\begin{multline*}
 N^j \colon \Cat^j(\Top) = \Cat(\Cat^{j-1}\Top) \xrightarrow{\Cat(N^{j-1})} \Cat(s^{j-1}(\Top)) \\
 \xrightarrow{N} s(s^{j-1}(\Top)) = s^j(\Top)
\end{multline*}
where  the second functor is the nerve for category objects in $s^{j-1}(\Top)$. 

The multinerve is also a full embedding of categories. Therefore, a $j$-tuple category  may be seen as a $j$-fold semi-simplicial space $\calc_{\bullet, \dots, \bullet}$ such that each ``slice'' obtained by evaluating $\calc$ at $(j-1)$ numbers at arbitrary places, the resulting semi-simplicial space is (the nerve of) a category. 

The $d$-dimensional $j$-tuple cobordism category of $\cor k$-manifolds $\cob_{d, \cor k}^j$ is defined as follows: Let $\vec p=(p_1, \dots, p_j)$, $\vec p \leq \vec 1$, and denote by $\ell$ the number of 0-entries of $\vec p$. An element of $(\cob_d^j)_{\vec p}$ is defined to be a collection of real numbers $a_1, \dots, a_j$, such that for all $n\in \{1,\dots, j\}$ we have
\[a_n =0 \quad \mathrm{if~}p_n=0, \qquad a_n >0 \quad \mathrm{if~}p_n=1,\]
together with a subset 
\[M\subset \rc{\infty+d-j}{k}\times \prod_{n=0}^j [0, a_n] \]
which is a neatly embedded compact $(d-\ell)$-dimensional submanifold. The differentials $d_0$ and $d_1$, in the $n$-th simplicial direction, are given by restricting to $0$ and $a_n$, respectively, in the $n$-th coordinate of the product, and by setting the value of $a_n$ to be 0. Composition in the $n$-th direction is given by stacking in the $n$-th coordinate and adding the $n$-th real numbers.

Again, instead of defining a topology, we view this as the set of 0-simplices of a simplicial object in $j$-tuple categories, and geometrically realize this simplicial object. The details are just as above.

\begin{theorem}\label{thm:delooping}
For $j\geq 2$, there is a canonical a weak equivalence
\[B^{j-1}\cob_{d, \cor k}^{j-1} \simeq \Omega B^j\cob^j_{d, \cor  k}.\] 
\end{theorem}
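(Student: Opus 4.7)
The plan, in analogy with Waldhausen's derivation of the $K$-theory delooping from the Additivity Theorem, is to realize $\Omega B^j\cob^j_{d,\cor k}$ as the homotopy fiber of a contractible path-object map to $B^j\cob^j_{d,\cor k}$, using the local-to-global principle (Theorem \ref{thm:main_Segal}).

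First, I would view the $j$-tuple category $\cob^j_{d, \cor k}$ as a semi-simplicial $(j-1)$-tuple category in the last tuple direction. Applying $B^{j-1}$ in the remaining $j-1$ directions produces a semi-simplicial space $Y_\bullet$ with $|Y_\bullet|\simeq B^j\cob^j_{d,\cor k}$. One checks that $Y_\bullet$ is a weakly unital semi-Segal space, with weak units provided by cylinders $M\times[0,a]$ in the distinguished direction, and that it is in addition \emph{group-like}: every $1$-simplex is an equivalence. This latter property, which is the reason the hypothesis $j\geq 2$ is needed, follows from the fact that a cobordism in the last direction can be reversed using the room afforded by any of the remaining $j-1 \geq 1$ tuple directions, via a bending-homeomorphism argument entirely analogous to that in the proof of Lemma \ref{lem:boundary_map_cocartesian_fibration}.

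Next, I would introduce the based decalage $E_\bullet$ at the empty manifold: its $p$-simplices are those $(p+1)$-simplices of $Y_\bullet$ whose last vertex is $\emptyset$. The forgetful map $E_\bullet \to Y_\bullet$, which drops this extra vertex, is both a level fibration and a cocartesian fibration (with cocartesian lifts again constructed by a bending procedure, using group-likeness of $Y_\bullet$), and $|E_\bullet|$ is contractible because the last-vertex map provides a simplicial contraction onto $\{\emptyset\}$. Applying Theorem \ref{thm:main_Segal} to the pull-back square
\[\xymatrix{
 E_\emptyset \ar[r] \ar[d] & E_\bullet \ar[d] \\
 \{\emptyset\} \ar[r] & Y_\bullet
}\]
then identifies $|E_\emptyset|$ with the based homotopy fiber of $*\simeq |E_\bullet|\to |Y_\bullet|$, that is, with $\Omega_\emptyset B^j\cob^j_{d,\cor k}$.

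It remains to identify $|E_\emptyset|$ with $B^{j-1}\cob^{j-1}_{d,\cor k}$; this is the final step and, together with group-likeness, is the main technical point. Unwinding the definitions, $E_\emptyset$ parametrizes semi-simplicial chains of cobordisms in the distinguished direction all of whose source and target are $\emptyset$, together with the full $(j-1)$-tuple cobordism structure in the remaining directions. A dimension count (the distinguished direction contributes exactly the ``missing'' dimension compared to $\cob^{j-1}_{d,\cor k}$) together with the contractible choice of collar parameter $a>0$ in the distinguished direction produces a natural zigzag of weak equivalences $|E_\emptyset|\simeq B^{j-1}\cob^{j-1}_{d,\cor k}$. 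The deloopings of Theorem \ref{thm:main_BM} then follow by iteration on $j$.
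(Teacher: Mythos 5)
Your overall skeleton coincides with the paper's: realize the first $j-1$ directions to obtain a semi-Segal space $Y_\bullet=B^{j-1}\cob^j$, show it is weakly unital and group-like (this is where $j\geq 2$ enters), identify $\Omega_\emptyset\lvert Y_\bullet\rvert$ with the (homotopy) fiber over $\emptyset$ of a contractible decalage-type object, and identify that fiber with $B^{j-1}\cob^{j-1}$. The paper packages these as Lemmas \ref{lem:delooping}, \ref{lem:groupoid_group_completion} and \ref{lem:endomorphism_of_B_i-1_cob}; your decalage square is a mild variant of the $c\wr\calc$ construction used in the proof of Lemma \ref{lem:groupoid_group_completion} (note only that the semi-simplicial ``point'' $\{\emptyset\}$ must be replaced by the contractible subobject of cylinders on $\emptyset$, since there are no degeneracies).

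The gap is in where you locate the work. In your plan the local-to-global principle is invoked only for the decalage map, while the two steps you dispose of with ``one checks'' and ``a dimension count'' are precisely the ones that consume it. First, the Segal condition for $Y_\bullet$ is not a formality: geometric realization does not commute with homotopy pull-backs, so the statement $\lvert Y_{n+1}\rvert\simeq \lvert Y_n\rvert\times^h_{\lvert Y_0\rvert}\lvert Y_1\rvert$ is itself an instance of Theorem \ref{thm:main_Segal}, applied to the face maps $d_0,d_1\colon \cob^j_{\bullet,1,\vec q}\to\cob^j_{\bullet,0,\vec q}$, which must first be shown to be cartesian and cocartesian level fibrations (this reduces, via viewing a cobordism of $\cor{k}$-manifolds as a $\cor{k+2}$-manifold, to Lemma \ref{lem:boundary_map_cocartesian_fibration}); moreover this must be re-established and iterated after each of the $j-1$ realizations, since one has to check at every stage that the face maps in the next direction are still (co)cartesian fibrations between weakly unital semi-Segal spaces. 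Your group-likeness argument has the right idea (reverse the cobordism; the compositions are cobordant to cylinders in one of the other directions, hence become weak units after realizing that direction), but even the statement that the $0$-skeleton inclusion sends weak units to weak units is another application of the principle. Second, the identification $\lvert E_\emptyset\rvert\simeq B^{j-1}\cob^{j-1}$ is not a dimension count: $E_\emptyset$ in each degree is a strict fiber of a map of $(j-1)$-fold realizations, and commuting the realization past this fiber again requires the local-to-global principle. The paper does this (Lemma \ref{lem:endomorphism_of_B_i-1_cob}) by interpolating through the category $\cob^{j-1}_{d,\cor{k+1}}$ of nullbordisms and splicing two fibration sequences, one for the forgetful map to $\cob^j_{1,\vec p}$ and one for the Genauer boundary functor $\dell_{k+1}$; some such mechanism is needed in your plan as well, and without it the final step does not go through.
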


The analogous statement holds for the the obvious generalization of $\cob_{d, \cor k}$ incorporating $\theta$-structures, where $\theta$ is a $d$-dimensional collared $\cor k$-vector bundle. Details are left to the reader.

Theorem \ref{thm:delooping} follows by combining three Lemmas which we are about to state. We abbreviate by $\cob^j$ the $j$-fold cobordism category of $\cor k$-manifolds in dimension $d$. Denote by $B^i \cob^j$ the geometric realization of $\cob^j$ in the first $i$ semi-simplicial directions. It is a $(j-i)$-fold semi-simplicial space. Then, we have:

\begin{lemma}\label{lem:delooping}
For $j\geq 1$, the semi-simplicial space $B^{j-1} \cob^j$ is a weakly unital semi-Segal space. If $j\geq 2$, it is a groupoid, \emph{i.e.}, every 1-simplex is an equivalence. 
\end{lemma}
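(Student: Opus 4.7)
The plan is to address the three parts of the statement separately: (a) the Segal condition in the remaining ($j$-th) semi-simplicial direction of $B^{j-1}\cob^j$, (b) weak unitality, and (c), for $j \geq 2$, that every $1$-simplex is an equivalence.

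For (a), I would exploit that composition in the $j$-th direction of $\cob^j$ is defined by stacking and summing the $j$-th real parameter, hence is strictly associative. At each multi-simplicial level $\vec p$ in the first $j-1$ directions, the Segal map
\[
(\cob^j)_{\vec p, n} \xrightarrow{\cong} (\cob^j)_{\vec p, 1} \times_{(\cob^j)_{\vec p, 0}} \cdots \times_{(\cob^j)_{\vec p, 0}} (\cob^j)_{\vec p, 1}
\]
is already a homeomorphism. To descend this to a homotopy pullback after applying $B^{j-1}$, I would check that the source-target maps in the $j$-th direction are levelwise Kan fibrations (in all of the first $j-1$ directions) by combining Lemma \ref{lem:simplicial_group_fibration} with the isotopy extension argument used in the proof of Theorem \ref{thm:genauer}, iterated in each direction. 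Reedy fibrancy in the first $j-1$ directions then follows, and geometric realization preserves the pullback as a homotopy pullback. For (b), the cylinder $M \times [0, a_j]$ in the $j$-th direction supplies a weak unit at every object $M$, exactly as in Section \ref{sec:genauer}.

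The main obstacle is the groupoid property (c) for $j \geq 2$. The strategy is to use the $(j-1)$-th semi-simplicial direction to construct ``inverses'' in the $j$-th direction. Given a $1$-simplex $W$ in the $j$-th direction, I would adapt the bending construction $\bfB$ of Lemma \ref{lem:boundary_map_cocartesian_fibration}, applied now in the plane spanned by the $(j-1)$-th and $j$-th real parameters, to build a $2$-simplex in the $(j-1)$-th direction whose faces relate $W$, its reverse $W\op$ (obtained by reflection in the $j$-th coordinate), and a suitable cylinder. After applying $B^{j-1}$ the bending direction gets realised out, so in $(B^{j-1}\cob^j)_1$ any $1$-simplex $W$ is exhibited as an inverse-up-to-homotopy of $W\op$ in both orders. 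The standard argument for Segal spaces then yields the cartesian and cocartesian conditions \eqref{eq:cartesian_semisimplicial} for $W$ with respect to the terminal map. The hardest part will be translating the geometric folding into an honest multi-simplex of $B^{j-1}\cob^j$ continuously in $W$, and verifying that the resulting horizontal maps in \eqref{eq:cartesian_semisimplicial} are genuine weak equivalences rather than merely $\pi_0$-surjections; this is where the Reedy fibrancy and strict composition set up in step (a) will be used.
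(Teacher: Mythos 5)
There is a genuine gap in step (a), and it is exactly the point the whole paper is built to address. You assert that, once the source--target maps are levelwise Kan fibrations and the multi-simplicial space is Reedy fibrant in the first $j-1$ directions, ``geometric realization preserves the pullback as a homotopy pullback.'' This is false in general: realization of a levelwise fibration of (semi-)simplicial spaces need not be a fibration, and the realization of a strict pullback along such a map need not be a homotopy pullback --- this is precisely the failure that Quillen's Theorem B and the local-to-global principle (Theorems \ref{thm:main_Segal} and \ref{thm:main_quasicat}) are designed to circumvent. The paper's proof instead first establishes (its statement (0)) that the face maps $d_{0}, d_{1}\colon \cob^j_{\bullet,1,\vec q}\to \cob^j_{\bullet,0,\vec q}$ are \emph{cartesian and cocartesian fibrations} in the first simplicial direction, by exhibiting them as homotopy pullbacks of the Genauer boundary functor $\dell_{k+1/k+2}$ (Lemma \ref{lem:boundary_map_cocartesian_fibration} together with Corollary \ref{cor:cocartesian_fibration_stable_under_pullback}); only then does Theorem \ref{thm:main_Segal} let one identify $B^1\cob^j_{n+1,\vec q}$ with the iterated homotopy pullback of the $B^1\cob^j_{1,\vec q}$'s. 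Moreover, to iterate over the remaining $j-2$ directions one must re-prove the cartesian/cocartesian property \emph{after} each realization (the paper's statement (4), which uses the groupoid property (3) to reduce it to $\pi_0$-surjectivity); your proposal has no mechanism for this, and Reedy fibrancy alone will not supply it.

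Your step (c) is a workable but different route from the paper's. The paper does not build an explicit $2$-simplex by bending; it observes that for a cobordism $f$ with reverse $f'$, the composites $f'f$ and $ff'$ are \emph{cobordant} to cylinders, hence after realizing the first direction they lie in the same path component as weak units and are therefore equivalences by its statement (2), whence $f$ is an equivalence by two-out-of-six. This sidesteps entirely the continuity and coherence issues you flag as the ``hardest part'' of your folding construction. Your step (b) is in the right spirit, but note that verifying that a cylinder remains a weak unit \emph{after} realization again requires the local-to-global principle (the paper's statement (2) computes the slices $(B^1\cob^j_{2,\vec q})/f$ via Theorem \ref{thm:main_Segal}); it is not automatic from the unrealized statement.
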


For a semi-Segal space $\calc$ and some object $c\in\calc_0$, we may define the \emph{endomorphism space} $\End_\calc(c)$ of $c$ as the  homotopy fiber of
\[(d_1, d_0)\colon \calc_1\to \calc_0\times \calc_0\]
over $(c,c)$. 

\begin{lemma}\label{lem:groupoid_group_completion}
If $\calc$ is a weakly unital semi-Segal groupoid, then for any $c\in \calc_0$ the canonical map 
\[\End_\calc(c)\to \Omega_c B\calc\]
is an equivalence.  
\end{lemma}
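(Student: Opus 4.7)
The plan is to apply the local-to-global principle (Theorem~\ref{thm:main_Segal}) to a ``path fibration'' over $\calc$ and thereby express $\Omega_c B\calc$ as the realization of a semi-simplicial space which can in turn be identified with $\End_\calc(c)$.

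I would first form the semi-simplicial space $c/\calc$ with $(c/\calc)_n$ the space of $(n+1)$-simplices of $\calc$ whose first vertex is $c$, with face maps $d_i^{c/\calc}=d_{i+1}^{\calc}$; there is a projection $p\colon c/\calc\to\calc$ given by $p_n=d_0^{\calc}$ (``drop the first vertex''). Using the Segal condition together with the groupoid hypothesis, $p$ is a cartesian and cocartesian fibration of weakly unital semi-Segal spaces: given $f\colon a\to b$ in $\calc$ and a $0$-simplex $\sigma_0\colon c\to a$ of $c/\calc$, the $p$-cocartesian lift of $f$ above $\sigma_0$ is a Segal-filler 2-simplex $\tilde\sigma$ with first edge $\sigma_0$ and second edge $f$; its universal property follows from the groupoid assumption, which makes precomposition with the equivalence $\sigma_0$ induce weak equivalences of mapping spaces, and cartesian lifts exist dually by invertibility of $f$. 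After applying Lemma~\ref{lem:translation_Segal_quasicat} (preceded by a Reedy fibrant replacement) to equip $\calc$ with a simplicial structure, the weak unit $u_c$ yields an extra degeneracy on $c/\calc$ exhibiting $|c/\calc|$ as contractible by the standard décalage argument.

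Applying Theorem~\ref{thm:main_Segal} to $p$ pulled back along the (now simplicial) inclusion $\{c\}^{\mathrm{simp}}\hookrightarrow\calc$ yields a homotopy pullback square, so that the pullback $\Omega_c^{\mathrm{cat}}\calc:=\{c\}^{\mathrm{simp}}\times_\calc(c/\calc)$ satisfies
\[
|\Omega_c^{\mathrm{cat}}\calc|\;\simeq\;\hofib_c\bigl(|c/\calc|\to B\calc\bigr)\;\simeq\;\hofib_c(\pt\to B\calc)\;=\;\Omega_c B\calc.
\]
By construction, $(\Omega_c^{\mathrm{cat}}\calc)_n$ is the space of $(n+1)$-simplices $\sigma$ of $\calc$ with first vertex $c$ and with $d_0^\calc\sigma$ the $n$-fold degenerate simplex on $c$. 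The Segal condition implies that each such space is weakly equivalent to $\End_\calc(c)$ via the ``first edge'' map $\sigma\mapsto d_2d_3\cdots d_{n+1}\sigma$, and under this identification every face map is homotopic to the identity (composition with a degenerate edge acts as the identity up to homotopy). Hence $\Omega_c^{\mathrm{cat}}\calc$ is homotopy-constant on $\End_\calc(c)$, so $|\Omega_c^{\mathrm{cat}}\calc|\simeq \End_\calc(c)$; tracking the canonical vertex inclusion $(\Omega_c^{\mathrm{cat}}\calc)_0 \hookrightarrow |\Omega_c^{\mathrm{cat}}\calc|$ shows that the composite equivalence agrees with the canonical map of the lemma.

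The main obstacle is verifying that $p$ is a cartesian and cocartesian fibration in the semi-Segal sense: the homotopy pullback conditions of the definition, involving the spaces $\calc_2/\tilde\sigma\to\calc_1/\mathrm{target}(\tilde\sigma)$, must be checked directly, and the groupoid hypothesis enters essentially to conclude that composition with equivalences induces weak equivalences on these ``slice'' spaces. Once this is in place, the contractibility of $|c/\calc|$ and the final identification follow from standard manipulations using Lemma~\ref{lem:translation_Segal_quasicat} and the Segal condition.
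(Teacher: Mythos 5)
Your route is genuinely different from the paper's. The paper does not run the path fibration through the local-to-global principle at all: it quotes the splitting $\End_\calc(c)\simeq \Omega_c B\calc\times B(c\wr\calc)$ from \cite[3.1]{RS_Hcob} (where $c\wr\calc$ is your $c/\calc$), and then kills $B(c\wr\calc)$ by producing a map $f\wr\calc\to c\wr\calc$, induced by a cocartesian edge $f$ out of $c$ supplied by weak unitality, which is simultaneously a level equivalence and semi-simplicially nullhomotopic. Your approach instead extracts the fibre sequence $\End_\calc(c)\to c/\calc\to\calc$ from Theorem~\ref{thm:main_Segal}, which is attractive because it stays entirely inside the machinery of the paper; the price is that all hypotheses of that theorem must be verified for $p\colon c/\calc\to\calc$ (that $c/\calc$ is a weakly unital semi-Segal space, that $p$ is a level fibration after Reedy replacement, and the cartesian half of the lifting condition, which is where the groupoid hypothesis genuinely enters --- the cocartesian half is automatic from the Segal condition, not from invertibility as you suggest). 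These verifications are plausible but constitute most of the work and are only sketched.

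The one step I would call a genuine gap is the final identification $|\Omega_c^{\mathrm{cat}}\calc|\simeq\End_\calc(c)$. ``Each level is equivalent to $\End_\calc(c)$ and each face map is homotopic to the identity under these identifications'' does not formally imply that the semi-simplicial space is level-equivalent to the constant one: the levelwise homotopies carry no coherence, and the first-edge maps do not assemble into a strict map to the constant object (they fail to commute with $d_0$, which restricts to the edge $\{0,2\}$ rather than $\{0,1\}$). What you actually need, and what is true, is that all face maps of $\Omega_c^{\mathrm{cat}}\calc$ are weak equivalences (this does follow from your levelwise identification together with two-out-of-three and the fact that composition with a weak unit is an equivalence), combined with the separate lemma that a semi-simplicial space all of whose face maps are weak equivalences realizes to its space of $0$-simplices. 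That lemma is standard but is itself a Quillen Theorem B-type statement --- essentially the analogue of Theorem~\ref{thm:cocartesian_Quillen_B} for the projection to a point over the semi-simplex category --- and is nowhere proved in the paper, so it must be supplied. This identification is the real content of the lemma (it is precisely the comparison between the ``left mapping space'' model and $\End_\calc(c)$ defined as the homotopy fibre of $(d_1,d_0)$), which is why the paper outsources it to the cited splitting rather than treating it as a formality.
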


Note that the Lemma is wrong without the hypothesis of weak unitality, as the example a space viewed as a discrete category groupoid shows. Finally, we have:

\begin{lemma}\label{lem:endomorphism_of_B_i-1_cob}
For $j\geq 2$, the canonical map
\[B^{j-1}\cob^{j-1}\to \End_{B^{j-1}\cob^j}(\emptyset) \]
is an equivalence.  
\end{lemma}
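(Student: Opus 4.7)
The plan is to unwind the endomorphism space as the realization of a $(j-1)$-fold semi-simplicial space of compact cobordisms sitting inside an open slab, and to identify that space with $\cob^{j-1}$ by rescaling the slab to $\IR$.

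First, by an isotopy-extension argument analogous to the one used for $\cob_{d-1,\cor{k}}$ in the proof of Theorem \ref{thm:genauer}, I would check that $\cob^j$ is levelwise locally fibrant in the sense that the source-target maps in the $j$-th direction, at each $(j-1)$-fold multi-simplicial level, are Serre fibrations, and more strongly fit into a Reedy-fibrant picture. This implies that homotopy fibers commute with the realization in directions $1,\dots,j-1$, so that $\End_{B^{j-1}\cob^j}(\emptyset)$ is identified with the realization, in directions $1,\dots,j-1$, of the $(j-1)$-fold semi-simplicial space $Z$ whose $\vec p$-simplices are tuples $(W,\vec a,a_j)$ with $a_j>0$, $\vec a$ prescribed by $\vec p$, and $W\subset \IR^{\infty+d-j}\cor{k}\times\prod_i[0,a_i]\times[0,a_j]$ a compact neatly embedded $(d-\ell)$-submanifold disjoint from $\{x_j=0\}\cup\{x_j=a_j\}$.

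The key geometric observation is then that such a $W$ is necessarily contained in the open slab $\IR^{\infty+d-j}\cor{k}\times\prod_i[0,a_i]\times(0,a_j)$. Rescaling $x_j$ exhibits the forgetful map $Z_{\vec p}\to(0,\infty)$, $(W,\vec a,a_j)\mapsto a_j$, as a trivial bundle with fiber $Z_{\vec p}(1)$; and fixing a diffeomorphism $\phi\colon \IR\xrightarrow{\cong} (0,1)$ together with the identification $\IR^{\infty+d-j}\cor{k}\times\IR=\IR^{\infty+d-(j-1)}\cor{k}$, the assignment $V\mapsto ((\phi\times\id)(V),\vec a,1)$ is a homeomorphism $(\cob^{j-1})_{\vec p}\xrightarrow{\cong} Z_{\vec p}(1)$. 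Both sides of this identification are compatible with all $2(j-1)$ face maps in directions $1,\dots,j-1$, so the identifications assemble to a level equivalence $\cob^{j-1}\xrightarrow{\simeq} Z$. Realizing and using that the choice of $\phi$ lies in a contractible space (so does not affect homotopy), one sees that the constructed equivalence agrees with the canonical map of the Lemma.

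The main obstacle I expect will be the bookkeeping: matching the neat-embedding and $\cor{k}$-collar structures under $\phi$, verifying naturality with respect to all face maps simultaneously, and justifying the passage of homotopy fibers through the realization in directions $1,\dots,j-1$. Once set up, the geometric content is the elementary observation that a compact cobordism inside a slab that is disjoint from both boundary hyperplanes is, after rescaling, the same as a compact cobordism in all of $\IR$.
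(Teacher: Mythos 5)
There is a genuine gap, and it sits exactly at the step you dispose of in one sentence: ``This implies that homotopy fibers commute with the realization in directions $1,\dots,j-1$.'' Local fibrancy (or Reedy fibrancy) of $\cob^j$ only guarantees that, \emph{at each fixed multi-simplicial level}, the strict fiber of $(d_1,d_0)$ over $(\emptyset,\emptyset)$ computes the homotopy fiber. It does \emph{not} imply that this levelwise fiber sequence survives geometric realization in the remaining $j-1$ directions: the realization of a levelwise fibration of semi-simplicial spaces is in general not a quasi-fibration. If this commutation were automatic, the Genauer sequence itself (Theorem \ref{thm:genauer}) would be a triviality, since $\dell_1$ is also a level fibration; the entire content of the paper is that one needs the extra (co)cartesian hypotheses to push fiber sequences through realization.

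The paper's proof supplies exactly this missing input. It does not take the fiber of $(d_0,d_1)$ in one step; instead it factors the identification into two fibration sequences, \eqref{eq:first_fibration_sequence} (the fiber of $d_0$ over the empty configurations, identified with $\cob^{j-1}_{d,\cor{k+1}}$, i.e.\ nullbordisms viewed as manifolds with an extra free face) and \eqref{eq:second_fibration_sequence} (the fiber of the boundary map $\dell_{k+1}$ on that category, identified with $\cob^{j-1}$). Each of these squares is a levelwise homotopy pull-back for $i=0$, and the statement that they \emph{remain} homotopy pull-backs after realizing direction by direction is obtained by repeatedly applying the local-to-global principle, using statement (0)/(4) from the proof of Lemma \ref{lem:delooping} that the relevant vertical maps are cartesian and cocartesian fibrations at every stage. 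Your rescaling identification of the strict fiber with $\cob^{j-1}$ is correct and corresponds to the paper's ``canonical map'' (inclusion of one $\IR$-coordinate into $[0,1]$), but it only addresses the easy, levelwise part of the lemma. To repair the argument you would need to either reproduce the paper's two-step application of Theorem \ref{thm:main_Segal}, or otherwise prove directly that $(d_0,d_1)$ is a realization-fibration in each of the directions $1,\dots,j-1$ --- which is not a consequence of fibrancy alone.
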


The remainder of this section is devoted to the proof of the three Lemmas. It will be convenient to use the notion of a 
\emph{local fibration} $P\colon \calc\to \cald$ between $j$-tuple categories. It is defined
inductively on $j$ as follows: For $j=1$, this is a local fibration between categories in the sense defined in section \ref{sec:local_to_global}. For $j>1$, $P$ defined to be a local fibration if the maps
\[
\begin{split}
 P& \colon \calc_{0, \bullet}\to \cald_{0, \bullet}, \\
 (P, d_1, d_0)&\colon \calc_{1, \bullet} \to \cald_{1, \bullet}\times_{\cald_{0, \bullet}\times \cald_{0,\bullet}} (\calc_{0, \bullet}\times \calc_{0, \bullet})   
\end{split}
\]
are local fibrations of $(j-1)$-tuple categories. A $j$-tuple category is \emph{locally fibrant} if the projection to the terminal object is a local fibration.  

\begin{lemma}\label{lem:multi_local_fibrations} Let $P\colon \calc\to \cald$ be a functor of $j$-tuple categories. 
\begin{enumerate}
 \item If $P$ is a local fibration, then it is a level fibration, that is, for each $\vec p\in \Delta_{inj}^j$, the map of spaces $\calc_{\vec p}\to \cald_{\vec p}$ is a fibration.
 \item $P$ is a local fibration if and only if for each $\vec p\leq \vec 1=(1,\dots, 1)\in \Delta_{inj}^j$, the canonical map
 \[\calc_{\vec p}
   \to 
   \cald_{\vec p}
      \underset
      {
         \underset
           {\substack{\vec q\to \vec p\\\vec q\neq \vec p}}
           {\lim}
         \cald_{\vec q}
      }
      {\times}
   \lim_{\substack{\vec q\to \vec p\\ \vec q\neq \vec p}} \calc_{\vec q}
  \]
  is a fibration. In particular, the notion of local fibration does not depend on the ordering of indices. 
  \item Suppose that $\calc$ is a locally fibrant $j$-tuple category. On each $\calc_{\vec p}$, the first and the last differentials in each of the simplicial directions are fibrations. 
\end{enumerate}
\end{lemma}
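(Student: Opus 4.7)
The plan is to prove (ii) first by induction on $j$ (from which the independence from ordering is immediate), then (i) similarly by induction on $j$, and finally deduce (iii) from (i) together with the independence from ordering. For the inductive step of (ii), the key ingredient is the standard decomposition of matching objects in a product Reedy category. Splitting proper face maps into $\vec p$ by their first coordinate, one verifies the canonical isomorphisms
\begin{align*}
M\calc_{(0, \vec p')} &\cong M(\calc_{0, \bullet})_{\vec p'}, \\
M\calc_{(1, \vec p')} &\cong M(\calc_{1, \bullet})_{\vec p'} \times_{M(\calc_{0, \bullet}\times\calc_{0, \bullet})_{\vec p'}} \calc_{(0, \vec p')}^2,
\end{align*}
where matching objects on the right are taken in dimension $j-1$. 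Substituting into the fibration condition of (ii) and using that matching commutes with limits, the condition at $(0, \vec p')$ reduces to the Reedy condition for $\calc_{0,\bullet}\to\cald_{0,\bullet}$ at $\vec p'$, while the condition at $(1, \vec p')$ reduces to the Reedy condition for $(P, d_1, d_0)\colon \calc_{1,\bullet}\to\cald_{1,\bullet}\times_{\cald_{0,\bullet}\times\cald_{0,\bullet}}\calc_{0,\bullet}\times\calc_{0,\bullet}$ at $\vec p'$. By the inductive hypothesis, these are jointly equivalent to the inductive definition of local fibration for $P$; the symmetry of the resulting characterization yields the independence from the ordering of indices.

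Part (i) then follows by a direct induction on $j$ from the recursive definition: assuming IH, both constituent local fibrations of $(j-1)$-tuple categories are level fibrations, and the factoring $\calc_{(1,\vec p')}\to\cald_{(1,\vec p')}\times_{\cald_{(0,\vec p')}^2}\calc_{(0,\vec p')}^2\to\cald_{(1,\vec p')}$ exhibits $\calc_{(1,\vec p')}\to\cald_{(1,\vec p')}$ as a composition of a fibration with the base change of the fibration $\calc_{(0,\vec p')}^2\to\cald_{(0,\vec p')}^2$. For (iii), the independence from ordering allows us to restrict to direction 1 and $\vec p=(1,\vec p')$; since $\calc$ is locally fibrant, $(d_1, d_0)\colon \calc_{1,\bullet}\to\calc_{0,\bullet}^2$ is a local fibration of $(j-1)$-tuple categories, hence a level fibration by (i), so that $\calc_{(1,\vec p')}\to\calc_{(0,\vec p')}^2$ is a fibration. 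Composing with either coordinate projection yields that the first and last differentials $d_0, d_1$ in direction 1 are fibrations.

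The main technical obstacle will be the careful verification of the matching-object decomposition at $(1, \vec p')$: identifying the limit over the poset of proper faces of $(1, \vec p')$ in the cube $\{0,1\}^j$ with the claimed pullback is a somewhat tedious but standard unwinding of the face-map structure, and all the substitutions into the Reedy condition in the inductive step of (ii) ultimately hinge on it.
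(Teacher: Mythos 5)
Your proposal is correct and follows essentially the same route as the paper: part (ii) is proved by induction on $j$, splitting the limit over proper faces along the first coordinate and reshuffling the iterated fiber products (the paper performs this rearrangement by hand rather than via named matching objects), (i) is the same easy induction, and (iii) is deduced from (i) and the symmetry of (ii) just as in the paper. The only point you leave implicit is that (iii) concerns all multidegrees $\vec p$, not only $\vec p\leq \vec 1$; for entries $\geq 2$ one adds the (one-line) observation that the first and last face maps of the multinerve in higher degrees are pull-backs of those in degree $(1,\vec p')$.
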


\begin{proof}
(i) is an easy induction on $j$. Let us prove (ii), which is also an induction on $j$. The case $j=1$ is true by definition. For $j>1$, we write $\vec p=(i, \vec k)$ where either $i=0$ or $i=1$. Then by induction hypothesis, $\calc_{0, \bullet}\to \cald_{0, \bullet}$ is a local fibration if and only if for each $\vec k\leq \vec 1$, the map
\[
\calc_{0, \vec k} \to \cald_{0, \vec k} 
\underset{\lim_{\vec \ell}\cald_{0, \vec \ell}}{\times} 
\lim_{\vec \ell} \calc_{0, \vec \ell}
=
\cald_{\vec p} 
\underset{\lim_{\vec q} \cald_{\vec q}}{\times }
\lim_{\vec q} \calc_{\vec q}
\]
is a fibration. (For notational brevity we omit here and in the following the specification of the indices $\vec \ell$ and $\vec q$, which is just as above.) On the other hand the map $(P, d_1, d_0)$ is a local fibration if and only if for each $\vec k \leq \vec 1$, the map
\[
\calc_{1, \vec k} 
\to 
\bigl(
\cald_{1, \vec k} 
\underset{(\cald_{0, \vec k}\times \cald_{0, \vec k})}{\times}
(\calc_{0, \vec k} \times \calc_{0, \vec k}) 
\bigr)
\underset
{\lim_{\vec \ell} 
\bigl(
  \cald_{1, \vec \ell}
  \underset{(\cald_{0, \vec \ell}\times \cald_{0, \vec \ell})}{\times}
  (\calc_{0, \vec \ell}\times \calc_{0, \vec \ell})
\bigr)}
{\times}
\lim_{\vec \ell} \calc_{1, \vec \ell}
\]
is a fibration. Let us write the right factor as $\lim(\cald_{1, \vec \ell}\times_{\cald_{1, \vec \ell}} \calc_{1, \vec \ell})$, distribute the limit over the fiber product, and switch the ordering in which the fiber products are taken. Then the target takes the form
\[
\cald_{1, \vec k} 
\underset
{\bigl(
(\cald_{0, \vec k}\times \cald_{0, \vec k})
\underset{\lim_{\vec \ell} (\cald_{0, \vec \ell}\times \cald_{0, \vec \ell})}{\times}
\lim_{\vec \ell} \cald_{1, \vec \ell}
\bigr)}
{\times}
\bigl(
(\calc_{0, \vec k}\times \calc_{0, \vec k})
\underset{\lim_{\vec \ell} (\calc_{0, \vec \ell}\times \calc_{0, \vec \ell})}{\times}
\lim_{\vec \ell} \calc_{1, \vec \ell}
\bigr)
\]
and this is equivalent to
\[ 
 \cald_{\vec p} 
    \underset{\lim_{\vec q} \cald_{\vec q}}{\times}
  \lim_{\vec q} \calc_{\vec q}.
\]
Therefore $P$ is a local fibration if and only if the required map is a fibration, for both choices of $i$ and all choices of $\vec k\leq \vec 1$, that is, for all choices of $\vec q\leq \vec 1$.

To see (iii), we remark from (i) that both maps $d_0$ and $d_1$ are level fibrations; a priori only in the first simplicial direction, but then also in each other directions by part (ii). But any of the maps from the statement is obtained from one of these maps by pull-back.
\end{proof}

\begin{lemma}\label{lem:multicob_locally_fibrant}
The $j$-fold category $\cob^j$ is locally fibrant. 
\end{lemma}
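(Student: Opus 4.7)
The plan is to argue by induction on $j$. The base case $j=1$ is already established in the proof of Theorem~\ref{thm:genauer}, which shows that $\cob_{d-1,\cor{k}}$ is locally fibrant (the same argument applies to $\cob_{d,\cor{k}}$). For the inductive step, by Lemma~\ref{lem:multi_local_fibrations}(ii) it suffices to verify, for each $\vec p \leq \vec 1 \in \Delta_{inj}^j$ with $s$ entries equal to $1$, that the canonical restriction map
\[
\cob^j_{\vec p} \longrightarrow \lim_{\substack{\vec q\to\vec p\\ \vec q\neq \vec p}} \cob^j_{\vec q}
\]
is a Serre fibration. Via the adjunction between geometric realization and the singular construction, this reduces to the analogous map of singular complexes being a Kan fibration.

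First I would mimic the proof of local fibrancy of $\cob_{d-1,\cor{k}}$ from section~\ref{sec:genauer}. An element of $\cob^j_{\vec p}$ is a tuple of positive parameters $\underline a = (a_n)_{p_n=1}$ together with a neat compact submanifold of $\rc{\infty+d-j}{k}\times\prod_{p_n=1}[0,a_n]$. Under a suitable rescaling chart (analogous to the one used to define $\cob_{d,\cor k, \bullet}$ in section~\ref{sec:genauer}), this ambient space is identified with a piece of $\rc{\infty+d-j+s}{k+2s}$, whose $2s$ additional face directions correspond to the source and target ends of the $[0,a_n]$-factors. The boundary faces indexed by $\vec q<\vec p$ then form a sub-poset of the $2s$-fold face poset of this enlarged $\cor{k+2s}$-structure. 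Applying Theorem~\ref{thm:isotopy_extension} to the rescaled ambient manifold would then yield, for each diffeomorphism type $[M]$, that the restriction map from the embedding space of $M$ into the big space to the limit of the embedding spaces over all $\vec q$-faces of $M$ is a Kan fibration.

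Next I would assemble the analogue of diagram~\eqref{eq:embeddings_and_morphisms}. Its top row would be the coproduct over diffeomorphism types $[M]$ of singular complexes of embedding spaces times spaces of positive parameters; its bottom row would be the map $\cob^j_{\vec p}\to\lim_{\vec q<\vec p}\cob^j_{\vec q}$ of interest. The vertical maps are Kan fibrations by applying Lemma~\ref{lem:simplicial_group_fibration} to free actions of the appropriate $S_\bullet\Diff(M)$, and they are surjective by the very definition of $\cob^j$. The top horizontal map is a Kan fibration by the previous paragraph (using that $S_\bullet$ and geometric realization preserve Kan fibrations). A formal diagram chase, identical to the one concluding the local fibrancy argument in section~\ref{sec:genauer}, then promotes this to the conclusion that the bottom horizontal map is a Kan fibration.

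The hard part will not be the inductive scaffolding, but rather the precise bookkeeping for the isotopy extension step: identifying the ambient product $\rc{\infty+d-j}{k}\times\prod[0,a_n]$ with a piece of a $\cor{k+2s}$-manifold, matching the poset of boundary maps $\vec q<\vec p$ with the corresponding sub-poset of the $2s$-fold face structure, and verifying that Theorem~\ref{thm:isotopy_extension} indeed gives a Kan fibration onto the \emph{full limit} over this poset rather than merely onto each individual face. Since that theorem is designed precisely to handle neat multi-face embedding problems, this should become essentially formal once the correct identification of the enlarged ambient $\cor{k+2s}$-manifold is in place.
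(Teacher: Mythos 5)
Your proposal is correct and follows essentially the same route as the paper: reduce via Lemma~\ref{lem:multi_local_fibrations}(ii) to checking the Reedy-type map at each $\vec p$, and verify it by generalizing the argument of section~\ref{sec:genauer} (Theorem~\ref{thm:isotopy_extension} applied to the ambient $\cor{k+2s}$-space, the diagram of embedding spaces, and Lemma~\ref{lem:simplicial_group_fibration}). The only cosmetic difference is that the paper dispenses with the induction on $j$ by observing the isomorphism $\cob^j_{\vec p}\cong \cob^{j-i}_{\vec 1}$ (with $i$ the number of zero entries of $\vec p$), which is the same reduction you perform by hand for each $\vec p$.
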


\begin{proof}
The map
\[\cob^j_{\vec 1} \to \lim_{\substack{\vec q\to \vec 1 \\ \vec q\neq \vec 1}} \cob^j_{\vec q}\]
is a fibration; for $j=1$ this is the fact that the (1-tuple) cobordism category is locally fibrant, but the argument from section \ref{sec:genauer}, using Theorem \ref{thm:isotopy_extension}, generalizes to the case of general $j$. 

This already shows that criterion (ii) from Lemma \ref{lem:multi_local_fibrations} holds vor $\vec p=\vec 1$. But for general $\vec p\leq \vec 1$ we have an isomorphism  $\cob^j_{\vec p}\cong \cob^{j-i}_{\vec 1}$, with $i$ the number of $0$'s in $\vec p$. Therefore the criterion (ii) from Lemma \ref{lem:multi_local_fibrations} applies to all $\vec p \leq \vec 1$. 
\end{proof}

We move on to the proof of Lemma \ref{lem:delooping}. To this end, we formulate a more general statement. 

\begin{lemma}\label{lem:delooping_general}
Let $0\leq i < j$. For any $\vec q \in \Delta_{inj}^{j-i-1}$, we have:
\begin{enumerate}
 \item $B^i \cob^j_{\bullet, \vec q}$ is a semi-Segal space.
 
 
  \item For $i>0$, the 0-skeleton inclusion 
 \[B^{i-1}\cob^j_{0, \bullet, \vec q} \to B^i \cob^j_{\bullet, \vec q}\]
 preserves equivalences.
 
 \item $B^i \cob^j_{\bullet, \vec q}$ is weakly unital and, for $i>0$, even group-like.
\end{enumerate}
If, futhermore, $j>i+1$, then for any  $\vec p\in \Delta_{inj}^{j-i-2}$, we have:
\begin{enumerate}
\setcounter{enumi}{3}
 \item The face maps
 \[d_{0/1}\colon B^i\cob^j_{\bullet, 1, \vec p} \to B^i\cob^j_{\bullet, 0, \vec p}\]
 are weakly unital and cartesian and cocartesian fibrations.
\end{enumerate}
\end{lemma}

\begin{proof}
The proof is by induction on $i$. We first let $i=0$. Then, $\cob^j_{\bullet, \vec q}$ is a locally fibrant topological category, and therefore a semi-Segal space. This shows (i); (ii) is void for $i=0$. Statement (iii) is immediate since cylinders are weak units. As for (iv), the face maps are weakly unital because they preserve cylinders. It remains to show the last claim of (iv). To this end, we view a cobordism between $\cor k$-manifolds as a $\cor{k+2}$-manifold whose last two faces do not meet --- this is done by identifying $\rc{\infty+d-1}{k}\times [0,1]$ with $\rc{\infty+d}{k+2}-\dell_{k+2}\dell_{k+1} \rc{\infty+d}{k+2}$. (If a tangential structure is present, this also involves the construction of a suitable collared $\cor{k+2}$-bundle on the $\cor{k+2}$-manifold, by means of Remark \ref{rem:k_vs_k+1}).  It follows that we have a homotopy pull-back square
\begin{equation}\label{eq:d_vs_dell}
 \xymatrix{
 \cob^j_{\bullet, 1, \vec q} \ar[d]^{d_{0/1}} \ar[rr] && (\cob^{j-1}_{d,\cor{k+2}})_{\bullet, \vec q} \ar[d]^{\dell_{k+1/k+2}}\\
 \cob^j_{\bullet, 0, \vec q} \ar[rr]^{i_{k+2/k+1}}  && (\cob^{j-1}_{d,\cor{k+1}})_{\bullet, \vec q}
}
\end{equation}
and the right vertical map is a cartesian and cocartesian fibration --- if $\vec q=0$ this is Lemma \ref{lem:boundary_map_cocartesian_fibration}, and the proof carries over verbatim to the general case. By Corollary \ref{cor:cocartesian_fibration_stable_under_pullback} the left map is then also a cartesian and cocartesian fibration.  

We move on to the inductive step and assume that the statements are true for some $i-1\geq 0$; let us now prove them for $i$. 

(i) By inductive hypothesis, each $B^{i-1}\cob^j_{r, \bullet, \vec q}$ is a semi-Segal space. Therefore, we have
\[B^i \cob^j_{n+1, \vec q} \simeq B\bigl(B^{i-1}\cob^j_{\bullet, n, \vec q}\times_{B^{i-1}\cob^j_{\bullet, 0, \vec q}}^h B^{i-1}\cob^j_{\bullet, 1, \vec q}\bigr).\]
By inductive hypothesis, the assumptions from the local-to-global principle, Theorem \ref{thm:main_Segal} hold. We conclude that 
\[ B^i \cob^j_{n+1, \vec q} \simeq B^i \cob^j_{n, \vec q} \times^h_{B^i \cob^j_{0, \vec q}} B^i \cob^j_{1, \vec q}\]
and it follows that the Segal condition holds indeed.

(ii) Let $f\in B^{i-1}\cob^j_{0, 1, \vec q}$ be an equivalence with target $x\in B^{i-1}\cob^j_{0,0,\vec q}$. It is enough to show that its image $f'$ in $B^i \cob^j_{1, \vec q}$ is cartesian: By self-duality $f'$ will then also be cocartesian, hence an equivalence.

Thus we need to show that the map
\begin{equation}\label{eq:weak_units_in_higher_cobcats}
B^i \cob^j_{2,\vec q}\times^h_{B^i \cob^j_{1,\vec q}} \{f'\}
 \to
 B^i \cob^j_{1,\vec q}\times^h_{B^i \cob^j_{0,\vec q}} \{x'\}
\end{equation}
is an equivalence. Now the inclusion of $\{f'\}$ into $B^i \cob^j_{1, \vec q}$ may be, up to homotopy equivalence, rewritten as the geometric realization of the semi-simplicial map
\[N_\bullet [0, \infty)\times \{f'\} \to B^{i} \cob^j_{\bullet, 1,\vec q}\]
that embeds cylinders on $f$ of variable length. Using this and the analogous statement for $x$',  \eqref{eq:weak_units_in_higher_cobcats} may be rewritten as the map
\begin{multline*}
 B^i \cob^j_{2,\vec q}\times^h_{B^i \cob^j_{1,\vec q}} B(N_\bullet [0, \infty)\times \{f'\})\\
 \to
 B^i \cob^j_{1,\vec q}\times^h_{B^i \cob^j_{0,\vec q}} B(N_\bullet [0, \infty) \times \{x'\}).
\end{multline*}

Applying the local-to-global principle, the target of this map is equivalent to the geometric realization of
\[
 B^{i-1} \cob^j_{\bullet, 1,\vec q}\times^h_{B^{i-1} \cob^j_{\bullet, 0,\vec q}} (N_\bullet [0, \infty) \times \{x'\}).
\]
Similarly, the domain is equivalent to the geometric realization of
\[
 B^{i-1} \cob^j_{\bullet, 2,\vec q}\times^h_{B^{i-1} \cob^j_{\bullet, 1,\vec q}} (N_\bullet [0, \infty)\times \{f'\}).
\]
Thus, it is enough to show that the canonical map of semi-simplicial spaces
\begin{multline*}
 B^{i-1} \cob^j_{\bullet, 2,\vec q}\times^h_{B^{i-1} \cob^j_{\bullet, 1,\vec q}} (N_\bullet [0, \infty)\times \{f'\})\\
 \to
 B^{i-1} \cob^j_{\bullet, 1,\vec q}\times^h_{B^{i-1} \cob^j_{\bullet, 0,\vec q}} (N_\bullet [0, \infty) \times \{x'\})
\end{multline*}
induces an equivalence on classifying spaces. We show this by proving that it is indeed a level equivalence.

By the semi-Segal condition, it is enough to consider the cases $\bullet=0$ and $\bullet=1$. For $\bullet=0$, the statement holds since $f$ is an equivalence by assumption. For $\bullet=1$ we need to show that $f\times [0,a]$ is also an equivalence. If $i>1$ then $i-1>0$, so $f\times [0,a]\in B^{i-1}\cob^j_{1,1,\vec q}$ is an equivalence by inductive hypothesis (iv). If $i-1=0$ then we have to show that the functor 
\[-\times [0,a]\colon \cob^j_{0, \bullet, \vec q} \to \cob^j_{1, \bullet,  \vec q}\]
is weakly unital. But this is clear since weak units in this case are cylinders, up to homotopy, and these are clearly preserved under taking product with $[0,a]$. 

(iii) Since the 0-skeleton inclusion is $\pi_0$-surjective, it is enough to show that the image of the 0-skeleton inclusion consists of equivalences. If $i>1$, this follows from (ii) and the inductive hypothesis of (iii); we are left to consider the case $i=1$. So let $f\in \cob^j_{0, 1, \vec q}$ be a cobordism, and $f'$ its reverse cobordism (obtained by flipping the cobordism axis). Then the composite cobordisms 
\[f' \circ f, f\circ f' \in \cob^j_{0, 1, \vec q}\]
are cobordant to cylinders. So their images under the 0-skeleton inclusion are homotopic to the images of cylinders.

By part (ii), we conclude that the images of $f'\circ f$ and $f\circ f'$ under the 0-skeleton inclusion are equivalences. We deduce that both $f'$ and $f$ are also mapped to equivalences.

(iv) In view of (iii), weak unitality of the face maps is automatic. Also any morphism is $d_{0/1}$-cartesian and cocartesian, so for the second statement it suffices to show that morphisms can be lifted at all, \emph{i.e.,} the four maps
\[(d_{0/1}, d_{0/1})\colon B^{i}\cob^j_{1, 1,\vec p}\to B^{i}\cob^j_{1, 0,\vec p} \times^h_{B^i\cob^j_{0, 0,\vec p}} B^{i}\cob^j_{0, 1,\vec p}\]
are $\pi_0$-surjective. But again by the local-to-global principle, these are equivalent to the realizations of maps
\[B^{i-1}\cob^j_{\bullet,1, 1,\vec p}\to B^{i-1}\cob^j_{\bullet,1, 0,\vec p} \times^h_{B^{i-1}\cob^j_{\bullet,0, 0,\vec p}} B^{i-1}\cob^j_{\bullet,0, 1,\vec p}.\]
Now, $\pi_0$-surjectivity on the geometric realizations follows from $\pi_0$-surjectivity on the level of 0-simplices. But this latter statement follows from the fact that the face maps
\[d_{0/1}\colon B^{i-1}\cob^j_{0,\bullet, 1, \vec p} \to  B^{i-1}\cob^j_{0,\bullet, 0, \vec p}\]
are cartesian-cocartesian fibrations, by inductive hypothesis.  
\end{proof}

\begin{proof}[Proof of Lemma \ref{lem:endomorphism_of_B_i-1_cob}]
For each $0\leq i<j$ and each $(j-i-1)$-multi-index $\vec p$, there is a more or  less canonical map
\[B^i \cob^{j-1}_{\vec p} \to B^i \cob^j_{1, \vec p}\]
induced by the inclusion of one $\IR$-coordinate into $[0,1]$. Its composition with $d_{0/1}$ takes values in the contractible semi-simplicial subspace $B^i \cob^j_{0, \vec p}$ on configurations involving only the empty set, and therefore defines a map 
\[B^i\cob^{j-1}_{\vec p}\to \End_{B^i\cob^{j-1}_{\vec p}}(\emptyset).\]
We claim that this map is an equivalence, for each $i$ and $\vec p$. 

First, a nullbordism of $\cor k$-manifolds may be viewed as a $\cor{k+1}$-manifold, by forgetting one length coordinate.  It follows that there is a commutative square
\[\xymatrix{
 (B^{i} \cob_{d, \cor{k+1}}^{j-1})_{\vec p} \ar[d] \ar[r]
 & (B^{i}\cob^j)_{1, \vec p} \ar[d]^{d_1}
 \\
 B^i (N_{\bullet} (0, \infty)^{j-1})_{\vec p} \ar[r]^{\emptyset}
 & (B^{i}\cob^j)_{0, \vec p}
}\]
where the  lower horizontal map includes cylinders at the empty set. It is a level-wise homotopy pull-back for $i=0$, by local fibrancy of $\cob^j$. 

Let us know view this as a diagram of semi-simplicial spaces in the first entry of $\vec p$. It was shown in the proof of Lemma \ref{lem:delooping} that for any $i\geq 0$, the right vertical map is a cartesian and cocartesian fibration between weakly unital semi-Segal spaces (groupoids, for $i>0$).  Therefore, applying the local-to-global principle repeatedly, it follows that the square is a homotopy pull-back for any $i\leq j-1$. Since the lower left corner is contractible, we obtain a fibration sequence
\begin{equation}\label{eq:first_fibration_sequence}
(B^{i} \cob_{d, \cor{k+1}}^{j-1})_{\vec p} \to (B^{i}\cob^j)_{1, \vec p} \xrightarrow{d_0} B^{i}\cob_{0, \vec p}^j
\end{equation}

Next, there is a commutative square
\[\xymatrix{
 (B^i \cob^{j-1})_{\vec p} \ar[r] \ar[d]
 & (B^{i} \cob_{d, \cor{k+1}}^{j-1})_{\vec p} \ar[d]^{\dell_{k+1}}
 \\
 B^i(N_{\bullet} (0, \infty)^{i-1})_{\vec p} \ar[r]^{\emptyset}
 & (B^i \cob^{j-1})_{\vec p}
}\]
It is also a homotopy pull-back for $i=0$. 

Let us again view this as a diagram of weakly unital semi-Segal spaces (groupoids, if $i>0$) in the first entry of $\vec p$. The right vertical map is a cartesian and cocartesian fibration for $i=0$. Continuing to argue iteratively as in the proof of Lemma \ref{lem:delooping}, it follows that for $i\leq j-1$, the right vertical map continues to be a cartesian and cocartesian fibration, and the diagram continues to be a homotopy pull-back. Therefore we obtain a fibration sequence
\begin{equation}\label{eq:second_fibration_sequence}
(B^i \cob^{j-1})_{\vec p} \to (B^{i} \cob_{d, \cor{k+1}}^{j-1})_{\vec p} \xrightarrow{\dell_{k+1}} (B^{i}\cob^j)_{0, \vec p}
\end{equation}

Since the diagram
\[\xymatrix{
 (B^{i} \cob_{d, \cor{k+1}}^{j-1})_{\vec p} \ar[rr] \ar[rd]_{\dell_{k+1}}
 && B^{i}\cob_{1, \vec p}^j \ar[ld]^{d_1}
 \\
 & B^{i}\cob_{0, \vec p}^j
}\]
commutes, we conclude from \eqref{eq:first_fibration_sequence} and \eqref{eq:second_fibration_sequence} the existence of the desired fibration sequence
\[B^i \cob^{j-1}_{\vec p} \to B^{i}\cob_{1, \vec p}^j \xrightarrow{(d_0, d_1)} B^{i}\cob_{0, \vec p}^j \times B^{i}\cob_{0, \vec p}^j.\qedhere\]
\end{proof}

It remains to give the

\begin{proof}[Proof of Lemma \ref{lem:groupoid_group_completion}]
We may assume that $\calc$ is Reedy fibrant. We denote by $c\wr \calc$ the pull-back semi-simplicial space
\[\xymatrix{
 c\wr\calc \ar[r] \ar[d] & \calc_{1+\bullet} \ar[d]^f\\
 \{c\} \ar[r] & \calc_0
}\]
where $f$ is the first-vertex map. The canonical projection $c\wr\calc\to \calc$, induced by $d_0$ on $\calc_{1+\bullet}$, comes with a canonical semi-simplicial nullhomotopy $(c\wr\calc)_n\to \calc_{n+1}$, induced by the identity on $\calc_{1+\bullet}$.  Then, following the proof of \cite[3.1]{RS_Hcob}, we see that
\[\End_\calc(c) \simeq \Omega_c B\calc \times B(c\wr\calc)\]
and it remains to show that the second factor is contractible. 

Since $\calc$ is weakly unital, there is a cocartesian edge $f\in \calc_1$ starting at $c$. Define $f\wr\calc$ as the pull-back
\[\xymatrix{
 f\wr\calc \ar[r] \ar[d] & \calc_{2+\bullet} \ar[d]^f\\
 \{f\} \ar[r] & \calc_1
}\]
where $f$ takes the first 1-simplex. The canonical projection $f\wr\calc\to c\wr\calc$, induced by $d_1$, comes with a canonical semi-simplicial nullhomotopy, induced by the identity on $\calc_{2+n}$. On the other hand, it is also an equivalence in semi-simplicial level 0, in view of the fact that $f$ is cocartesian; and it follows from the Segal condition that it is an equivalence in each semi-simplicial degree. Therefore, the induced map on classifying spaces is both nullhomotopic and an equivalence.
\end{proof}

\appendix

\section{Isotopy extension on \texorpdfstring{$\cor{k}$}{<k>}-manifolds}\label{sec:isotopy_extension}

Recall that an $n$-dimensional (smooth) manifold is a second countable paracompact topological space equipped with an atlas of local homeomorphisms to open subsets of $\IR^n_+=[0,\infty)^n$, such that the change of charts is smooth. (This means that our definition of manifold allows corners.) A \emph{submanifold} of $M$ is a subset $N\subset M$ which, locally in a suitable chart, looks like $\IR^m_+\times \{1\} \subset \IR^n_+$. 

All manifolds in this appendix will be smooth. One major example of a manifold with corners is $\rc{n}{k}:=\IR^{n-k}\times \IR^k_+$. We note that this comes with the extra structure of a $(k+1)$-ad in the sense of Wall: That is, its boundary (as a topological manifold)  is partitioned into the $k$ many subspaces
\[\dell_i \rc{n}{k} := \{x\in \rc{n}{k}\;\vert\; x_{n-k+i}=0\} \quad (i\in \{1,\dots, k\}).\]
(This $(k+1)$-ad structure is compatible with the manifold  structure  in a suitable way, giving $\rc{n}{k}$ the structure of a $\cor{k}$-manifold, see \cite{Laures} or \cite{Genauer}). We denote as usual, for $A\subset \underline k = \{1,\dots, k\}$, the value of the $(k+1)$-ad $\rc{n}{k}$ at $A$, given by 
\[\rc{n}{k}(A):=\bigcap_{i\notin A} \dell_i \rc{n}{k}=\{x\in \rc{n}{k}\;\vert\; \forall_{i\notin A} x_{n-k+i}=0\}.\]

For $A\subset B\subset \underline k$, we identify $\rc n k(B)$ with $\rc n k (A)\times [0,\infty)^{B - A}$ by writing the coordinates $x_{n-k+i}$, $i\in B\setminus A$, into the second factor. With this identification, we have canonical \emph{collar embeddings}
\[c_{AB}\colon \rc{n}{k}(A) \times [0,\eps)^{B-A} \to \rc{n}{k}(B),\]
for $A\subset B\subset \underline k$, and any $\eps>0$.

The only manifolds that we will consider are submanifolds $M\subset \rc{n}{k}$. These inherit the structure of a $(k+1)$-ad by means of 
\[M(A):= M\cap \rc{n}{k}(A).\]
(In fact, they inherit the structure of a $\cor k$-manifolds; conversely any $\cor k$-manifold is isomorphic, as manifold and as $(k+1)$-ad, to a submanifold $M\subset \rc n k$.)

Furthermore we will restrict our attention to submanifolds which are \emph{neat}: For all $A\subset B \subset \underline k$, we require that there is an $\eps>0$ such that 
 \[M(B) \cap (\rc{n}{k}(A) \times [0,\eps)^{B-A})= M(A) \times [0,\eps)^{B-A}.\]
If we want to specify the value of $\eps$, we also speak of an \emph{$\eps$-neat submanifold}.
An $\eps$-neat smooth submanifold $M\subset \rc{n}{k}$ inherits extra structure of collar embeddings
\[c_{AB}\colon M(A) \times [0,\eps)^{B-A} \to M(B),\]
by restricting the collar embeddings on $\rc n k$. These are compatible in the sense that  for each $A\subset B\subset C\subset \underline k$, the following triangle is commutative:
\[\xymatrix{
 M(A)\times [0,\eps)^{C-A} \ar[rr]^{c_{AB}\times \id_{[0,\eps)^{C-B}}} \ar[rrd]_{c_{AC}}
 && M(B)\times [0,\eps)^{C-B} \ar[d]^{c_{BC}}
 \\
 && M(C)
}\]

Let $M, N\subset  \rc{n}{k}$ be $\eps$-neat submanifolds. By definition, an \emph{$\eps$-neat embedding,} resp.~\emph{$\eps$-neat diffeomorphism} $e\colon M\to N$ is a map which is both an allowable map of $(k+1)$-ads (that is, for all $A\subset \underline k$, we have $e\inv(N(A))=M(A)$) and an embedding (resp.~diffeomorphism) of manifolds, which is cylindrical at the collars in the sense that for each $A\subset B\subset \underline k$, the following square  commutes:
\[\xymatrix{
 M(A) \times [0,\eps)^{B - A} \ar[rr]^{c_{AB}} \ar[d]^{e\times \id}
 && M(B)  \ar[d]^{e}
\\
N(A)\times [0,\eps)^{B - A} \ar[rr]^{c_{AB}}
 && N(B) 
}\]

We will now study family versions of isotopy extension theorems for neat submanifolds of $\rc{n}{k}$. To this end, we let
\[\Emb^\eps(M,\rc n k) \quad \mathrm{and} \quad \Aut^\eps(M)\]
denote the spaces of $\eps$-neat embeddings $M\to \rc n k$ and $\eps$-neat automorphisms $M\to M$, equipped with the strong $C^r$-topology \cite[I.4.3]{Cerf}, $r\geq 1$. We then define
\[\Emb(M,\rc n k)=\colim_{\eps} \Emb^\eps(M,\rc n k) \quad \mathrm{and}\quad \Aut(M)=\colim_{\eps} \Aut^\eps(M)\]
equipped with the colimit topology.

We are interested in restricting an embedding $M\to \rc n k$ to an embedding $M(A)\to \rc n k(A)$ for $A\subset \underline k$, or to an embedding $\dell M\to \dell \rc n k$. As $\dell M= \cup_{A\subsetneq \underline k} M(A)$, we \emph{define}
\[\Emb^\eps(\dell M,\dell \rc n k):=\lim_{A\subsetneq \underline k} \Emb^\eps(M(A), \rc n k(A)),\]
and, more generally, for a subcomplex $X\subset \Delta^{\underline k}$ (that is, a non-empty subset $X\subset \power{\underline k}$ which is closed under taking subsets),
\[\Emb^\eps(M(X), \rc n k(X)):=\lim_{A\in X} \Emb^\eps(M(A), \rc n k(A)).\]
Note that
\[\Emb^\eps(M(A), \rc n k(A)) \cong \Emb^\eps(M(\sigma_A), \rc n k(\sigma_A)\]
with $\sigma_A := \{B\subset \underline k\;\vert\; B\subset A\}$ the simplex spanned by $A$, so the first case is also included in this notation. Again we let
\begin{align*}
\Emb(M(X), \rc n k(X)) &:= \colim_\eps \Emb^\eps(M(X), \rc n k(X)), \\
\Aut(M(X))  &:= \colim_\eps \Aut^\eps(M(X)).
\end{align*}

\begin{theorem}\label{thm:isotopy_extension}
Let $M\subset \rc n k$ be a compact neat submanifold. Let further $Y\subset X\subset \Delta^{\underline k}$ be subcomplexes. In the square formed by restriction maps
\[\xymatrix{
\Aut(\rc n k(X)) \ar[rr] \ar[d] && \Emb(M(X), \rc n k(X)) \ar[d]\\
\Aut(\rc n k(Y)) \ar[rr] && \Emb(M(Y), \rc n k(Y))
}\]
all maps are Serre fibrations, as well as the map from the left corner to the pull-back of the remaining diagram.
\end{theorem}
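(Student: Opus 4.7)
The plan is to isolate two primary claims, from which all five fibration statements follow:
\begin{enumerate}
\item[(I)] For any non-empty subcomplex $X \subset \Delta^{\underline{k}}$, the restriction $\Aut(\rc{n}{k}(X)) \to \Emb(M(X), \rc{n}{k}(X))$ is a Serre fibration.
\item[(II)] For any inclusion $Y \subset X$ of subcomplexes, the restriction $\Emb(M(X), \rc{n}{k}(X)) \to \Emb(M(Y), \rc{n}{k}(Y))$ is a Serre fibration.
\end{enumerate}
Indeed, the bottom horizontal of the square is (I) applied to $Y$, the left vertical is (II) with $M = \rc{n}{k}$, and the right vertical is (II). For the map from $\Aut(\rc{n}{k}(X))$ into the pull-back, given a lifting problem it suffices to first use (II) to lift the embedding half, and then use the resulting action of the fibers of (I) to correct the automorphism on $Y$; the necessary transitivity of this action up to homotopy is provided again by (I).

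Both (I) and (II) are proved by induction on the number of simplices of $X$ (respectively, of $X \setminus Y$). For the inductive step one reduces to attaching a single top-dimensional simplex $A$: that is, $X = Y \cup \{A\}$ with every proper subset of $A$ already in $Y$. For (II), the canonical map
\[ \Emb(M(X), \rc{n}{k}(X)) \to \Emb(M(Y), \rc{n}{k}(Y)) \times_{\Emb(M(\dell A), \rc{n}{k}(\dell A))} \Emb(M(A), \rc{n}{k}(A)) \]
is an isomorphism by neatness, using the collar data to extend each compatible family uniquely. So it suffices to show that $\Emb(M(A), \rc{n}{k}(A)) \to \Emb(M(\dell A), \rc{n}{k}(\dell A))$ is a Serre fibration. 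Since $\rc{n}{k}(A) \cong \rc{n-(k-|A|)}{|A|}$ and $M(A)$ is a neat submanifold, this is a parametrized isotopy extension problem of the classical type: lift a family of neat embeddings of the boundary faces to a family of neat embeddings of the whole top-face-restriction $M(A)$, cylindrical near its own boundary. An analogous reduction and a classical parametrized isotopy extension theorem -- again by an inner induction on $|A|$ -- proves (I) at the top face, while the compatibility with the already-lifted automorphism on $\rc{n}{k}(X_0)$ (where $X_0 = X \setminus \{A\}$) is ensured by cutting off with a bump function supported away from the collars of the other faces.

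The main obstacle is the collar compatibility: at every step one must produce lifts that are cylindrical at every face of $\rc{n}{k}$, and glue smoothly to the lifts already constructed on the lower faces. This is handled by the standard device of first making the given isotopy compatible with a fixed $\eps$-collar using the collar maps $c_{AB}$ (which is possible because $\eps$-neat embeddings are automatically cylindrical there), then applying classical (relative, parametrized) isotopy extension on the \emph{interior} of $M(A)$, viewed as a manifold with corners in $\rc{n}{k}(A)$ minus the $\eps/2$-neighborhood of its $\dell A$-faces. The parameter $\eps$ may need to be shrunk finitely many times along the inductions; this is harmless because of the colimit definition of the mapping spaces. The classical parametrized isotopy extension theorem one invokes here (in the corner setting) is either Cerf's or Palais', stated for compact base of parameters, which is enough to verify the lifting property against the inclusion $D^n \times \{0\} \hookrightarrow D^n \times [0,1]$.
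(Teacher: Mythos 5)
Your overall strategy -- induct over the simplices of $X$, reduce to attaching a single top face $A$, and invoke a collared/parametrized version of classical isotopy extension (Cerf, Palais, Lima) with the collar compatibility handled by cutoff functions and shrinking $\eps$ -- is close in spirit to the paper's proof, which likewise reduces to a single simplex and to the relative restriction map $\Aut(\rc n k;\dell)\to\Emb(M,\rc n k;\dell)$, smooths corners, and cites Lima. However, there are two genuine gaps.

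First, the left vertical map $\Aut(\rc n k(X))\to\Aut(\rc n k(Y))$ is \emph{not} an instance of your claim (II) ``with $M=\rc n k$''. Claim (II) is an isotopy-extension statement and requires $M$ compact; $\rc n k$ is not compact, and parametrized isotopy extension fails for non-compact submanifolds without control at infinity. Moreover $\Aut(\rc n k(X))$ is not a union of fibers of the restriction map on neat self-embeddings (the preimage of $\Aut(\rc n k(Y))$ contains non-surjective self-embeddings), so even a fibration statement for self-embeddings would not restrict to the automorphism spaces. This map needs a separate argument; the paper constructs an explicit local section of $\Aut(\rc n k)\to\Aut(\dell_i\rc n k)$ by the interpolation formula $\hat s_i(\phi)(x,t)=\alpha(t)\cdot x+(1-\alpha(t))\cdot\phi(x)$ in the collar coordinate, iterates over the faces, and then over the simplices of $X\setminus Y$. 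Something of this kind must be supplied.

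Second, your deduction of the statement about the map into the pull-back is not a proof as written: knowing that all four maps of the square are Serre fibrations does not formally imply that the comparison map to the pull-back is one, and ``transitivity of the action of the fibers of (I) up to homotopy'' does not substitute for the actual ingredient needed, namely a \emph{relative} isotopy-extension statement (automorphisms of $\rc n k(X)$ fixing $\rc n k(Y)$ mapping to embeddings of $M(X)$ agreeing with the inclusion on $M(Y)$). Concretely: after lifting the automorphism family over $Y$ to automorphisms $\Psi_t$ of $\rc n k(X)$ (which already requires the first point above), one must extend the family $\Psi_t^{-1}\circ e_t$ of embeddings rel $M(Y)$ to ambient automorphisms rel $\rc n k(Y)$; this relative statement is what the paper reduces to in its Step 2 before smoothing corners and citing Lima. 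Since you do invoke relative parametrized isotopy extension in your inductive step, the ingredient is within reach, but the logical reduction of the pull-back statement to it should be made explicit rather than derived from (I) and (II) alone.
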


\begin{proof}
\emph{Step 1.} We show that the left vertical map admits a local section
\[s\colon \Aut(\rc n k(Y))\supset U\to \Aut(\rc n k(X))\]
on an open neighborhood $U$ of the neutral element, so that  the map under consideration is locally trivial. We first construct, for any $i\in \underline k$, a local section of the map
$\Aut(\rc n k)\to \Aut(\dell_i \rc n k)$.

Choose some smooth map 
\[\alpha\colon [0,\infty)\to [0,1]\]
which is identically 0 near 0 and identically 1 on $[1,\infty)$, and consider the map
\[\hat s_i\colon \Aut(\dell_i \rc n k) \to  C^r(\rc n k, \rc n k)\]
defined by 
\[\hat s_i(\phi)(x,t) :=  \alpha(t)\cdot x + (1-\alpha(t))\cdot \phi(x)\]
where we identify $\rc n k$ with $\dell_i \rc n k \times [0,\infty)$. Since $\Aut(\rc n k)$ is open in the space of all neat and allowable $C^r$-selfmaps of $\rc n k$ (this  follows from \cite[I.1.4.2]{Cerf}), the map $\hat s_i$ restricts to a local section $s_i$ as required.  

Now we show that the map $\alpha\colon \Aut(\rc n k)\to \Aut(\dell \rc n k)$ also has a local section. To this end, we note that if $x$ is a fixed point of $\phi\in \Aut(\dell_i\rc n k)$, then so is it a fixed point of $\hat s_i(\phi)$. Therefore, if we locally define maps 
\[\sigma_i\colon \Aut(\dell\rc n k) \supset U_i\to \Aut(\rc n k), \quad i\in \underline k,
 \]
iteratively by
\[\sigma_1:=s_1,\quad  \sigma_{i+1}(\phi):=\sigma_i(\phi)\cdot s_{i+1}(\sigma_i(\phi)\inv\cdot \phi),\]
then we iteratively see that $\sigma_i(\phi)$ restricts to $\phi$ over the first $i$ faces, so that $\sigma_k$ defines a local section as required.  

Since, for any $A\subset \underline k$, we have $\rc n k (A) = \rc{n-k+\vert A\vert}{\vert A\vert}$, the above shows that the restriction map $\Aut(\rc n k(A))=\Aut(\rc n k(\sigma_A))\to \Aut(\rc n k(\dell\sigma_A)$ has a local section for all $\emptyset \neq A\subset \underline k$. But now, in the general case, the inclusion $X\to Y$ may be factored into inclusions each of which fills in precisely one simplex, and therefore has a local section. Composing these local sections defines a local section $s$ as required in the general case.

\emph{Step 2.} We show that the map from the left upper corner to the pull-back of the remaining diagram admits a local section. As in step 1, one reduces to the case where $X$ is the simplex spanned by $\underline k$, and $Y$ its boundary; so we need to show that the map
\[\Aut(\rc n k) \to \Emb(M, \rc n k)\times_{\Emb(\dell M, \dell \rc n k)} \Aut(\dell \rc n k)\]
has a local section. Using the local section from step 1, one reduces to showing that the restriction map 
\[\Aut(\rc n k; \dell) \to  \Emb(M, \rc n k; \dell)\]
on diffeomorphism group and embedding space relative boundary has a local section. As the behavior near the boundary is standard, we can smooth corners out. But then the argument from \cite{Lima} guarantees the existence of a local section.

\emph{Step 3.}  Taking $Y=\emptyset$ in Step 2, we conclude that the upper (hence also the lower) horizontal map is a Serre fibration; as well as the diagonal map. But then it easily follows that the right vertical map is also a Serre fibration. 
%
\end{proof}
%
%
%
%
%

\typeout{-----------------------  References ------------------------}

\bibliographystyle{abbrv}


\end{document}